\definecolor{cyan(process)}{rgb}{0.0, 0.72, 0.92}
\definecolor{columbiablue}{rgb}{0.61, 0.87, 1.0}
\definecolor{sandstone}{HTML}{786D5F}
\definecolor{beaublue}{rgb}{0.74, 0.83, 0.9}
\definecolor{cherryblossompink}{rgb}{1.0, 0.72, 0.77}
\definecolor{light-gray}{gray}{0.95}
\pgfplotsset{compat=1.16}
\renewcommand*{\backref}[1]{}
\renewcommand*{\backrefalt}[4]{%
    \ifcase #1 %
    \or
        \textcolor{red}{\textsuperscript{#2}}%
    \else
        \textcolor{red}{\textsuperscript{#2}}%
    \fi
}
\theoremstyle{plain}
\newtheorem{theorem}{Theorem}[section] 
\newtheorem{lemma}[theorem]{Lemma}
\newtheorem{corollary}[theorem]{Corollary}
\newtheorem{proposition}[theorem]{Proposition}
\theoremstyle{definition}
\newtheorem{definition}[theorem]{Definition}
\newtheorem{remark}[theorem]{Remark}
\begin{document}

\title[]{Spiking and Resetting}

\author{C\'edric Bernardin} 
\address{Faculty of Mathematics,National Research University Higher School of Economics, 6 Usacheva, Moscow, 119048, Russia}
\email{sedric.bernardin@gmail.com}

\author{Vsevolod Vladimirovich Tarsamaev}
\address{Faculty of Mathematics,National Research University Higher School of Economics, 6 Usacheva, Moscow, 119048, Russia}
\email{vvtarsamaev@edu.hse.ru}

\keywords{Spiking Theory; Resetting Theory; Renewal Theory; Homogenisation of PDMP; Poisson Point Processes; Boundary Layers.}
\renewcommand{\subjclassname}{%
  \textup{2020} Mathematics Subject Classification}
\subjclass[2020]{Primary 60J76 ; Secondary 60G55, 60K05, 34E15, 60F17}

\maketitle

\begin{abstract}
We consider a one-dimensional piecewise deterministic Markov process (PDMP) on $[0,1]$ with resetting at $0$ and depending on a small parameter $\varepsilon>0$. In the singular vanishing limit $\varepsilon \to 0$ we prove that the `` resetting '' simple point process associated to the PDMP converges to a point process described by a jump Markov process decorated by ``spikes'' distributed as a time-space Poisson point process with intensity proportional to $dt \otimes  x^{-2} dx$. This proves rigorously results appeared previously in \cite{SBDKC25} and also justifies partially a conjecture formulated there. 
\end{abstract}



\section{Introduction, Model and Result}
\label{sec:imr}

The study of quantum trajectories \cite{BP02,WM10} is a field of significant physical interest, both from theoretical and practical perspectives, as evidenced by the Nobel Prizes awarded to S. Haroche and D.J. Wineland \cite{HW12} in 2012 and to J. Clarke, M.H. Devoret and J.M. Martinis \cite{CDM25} in 2025. Quantum trajectories can be viewed as scaling limits of discrete-time iterated quantum measurements \cite{AP06,P10} or as effective equations arising, for example, in the quantum filtering framework \cite{BVHJ09}. In the strong measurement regime, a quantum trajectory behaves as a pure jump Markov process on a finite set (pointer states). A complete mathematical understanding of the quantum jump phenomena observed in these experiments remains elusive, although significant progress has been made recently in the physics literature \cite{BBB12, BBT15} and in the mathematical literature \cite{BCFFS17, BBCCNP21, F22, FR24}.

More recently, the so-called ``spiking'' phenomenon was discovered, first heuristically in \cite{GP92,MW98,CBJP06,CBJ12}, and then treated theoretically by M. Bauer, D. Bernard and A. Tilloy in \cite{TBB15,BBT16,BB18}, which motivated further mathematical works, e.g. \cite{KL19,BCCNP23,BCNP25}.

Motivated initially by these physical questions, this paper is devoted to the rigorous study of the singular 'spiking' limit of certain one-dimensional piecewise deterministic Markov processes (PDMPs) on the state space $[0,1)$. These processes depend on a small parameter $\varepsilon>0$ that tends to zero, giving rise to a singular limit described by a space-time Poisson point process. We refer to \cite{SBDKC25} for physical motivations of the specific model studied here (see also \cite{DCD23}) and to \cite{BCCNP23} for mathematical studies considering similar problems in the context of singular limits of one-dimensional stochastic differential equations (SDEs) driven by Brownian white noise, in the strong noise limit regime. This paper presents the first mathematical study of singular ``spiking'' limits of one-dimensional SDEs driven by Poissonian white noise, in the strong noise limit regime. The questions addressed here could also find applications in the context of resetting problems \cite{EM11, EMS20,NG23}, but we will not discuss these in the current paper.

The aim of this paper is twofold: first, to provide a rigorous proof of the claims appearing in \cite{SBDKC25}, and second, to give a rigorous justification of a conjecture formulated in \cite{SBDKC25}. Although we do not completely justify the general conjecture presented in \cite{SBDKC25}, we prove it in the special ``resetting'' context, without relying on the explicit Laplace transform computations used in \cite{SBDKC25}.
\medskip 

More precisely, given a small parameter $\varepsilon>0$, the model is defined as follows. Let $f,h:[0,1] \mapsto \mathbb R$ be two times continuously differentiable functions such that
\begin{equation}
\label{eq:FH}
    f(1) < 0 < f(0) \quad  \text{and} \quad h(1) =0, \ h'(1) \ne 0,  \quad h(x)>0  \quad \text{if} \ x\in [0,1) \ .
\end{equation}
Since $h \vert_{[0,1)} > 0$, the condition $h(1)=0$ implies $h' (1)<0$. Let 
\begin{equation}
\omega^{\varepsilon}: x \in [0,1] \to \varepsilon f(x) + x h(x) \ ,
\end{equation}
and denote ${x}^\varepsilon =({x}^\varepsilon_t)_{t \ge 0}$  with state space $[0,1]$ the deterministic flow governed by the ordinary differential equation
\begin{equation}
\label{eq:deterministicflow}
\begin{split}
\varepsilon {\dot x^\varepsilon}_t = \omega^\varepsilon (x_t^\varepsilon) =\varepsilon f(x_t^\varepsilon) + x_t^\varepsilon h(x_t^\varepsilon), \quad x^\varepsilon_0=0 \ .
\end{split}
\end{equation}
We have that $\omega^\varepsilon (0)=f(0)>0$ and $\omega^\varepsilon (1)=f(1)<0$. We denote then
\begin{equation}
\begin{split}
x^\varepsilon_*= \inf\{ x \in [0,1] \; ; \; \omega^\varepsilon (x)= 0 \}  \in (0,1) \ .
\end{split}
\end{equation}
The flow $x^\varepsilon$ is strictly increasing in time with 
\begin{equation}
\begin{split}
\lim_{t \to \infty} x^\varepsilon_t =x^\varepsilon_* \ .
\end{split}
\end{equation}
Observe that because of the term $\varepsilon {\dot x^\varepsilon}_t$, Eq. \eqref{eq:deterministicflow} defines a flow presenting a time boundary layer at initial time{\footnote{For the reader not familiar with boundary layers, a simple example is provided by the solution of $\varepsilon \dot x_t^\varepsilon =-x_t^\varepsilon$, $x_0^\varepsilon=1$, whose solution is $x_t^\varepsilon=e^{-t/\varepsilon}$. For $t \ll \varepsilon$, $x_t^\varepsilon \approx 1$ while for $t \gg \varepsilon$, $x_t^\varepsilon \approx 0$. The main problem with the present boundary layer is then to describe the behaviour of $x_t^\varepsilon$ in the layer $ t \approx \varepsilon $ where simple Taylor expansions are not efficient and multiscales appear.}}. Since the flow is one-dimensional in space, it can be written more or less explicitly in some integral form, but resulting on some singular integrals (in space) defining it. This is illustrated in the ad hoc delicate asymptotic study performed in Section \ref{sec:nj-laplace-generating-convergence}.

\bigskip

We define now the (random) resetting dynamics ${X}^\varepsilon =( {X}^\varepsilon_t)_{t \ge 0}$  with state space $[0,1)$. Let $(\sigma_n^\varepsilon)_{n \ge 1}$ be a sequence of i.i.d. positive random variables such that 
\begin{equation}
\label{eq:mu_t_def}
\forall t > 0, \quad \mathbb P (\sigma^\varepsilon_n > t) := \mu_t^\varepsilon = \exp\left( -{\varepsilon}^{-1}  \int_0^t h(x^\varepsilon_s) \, ds \right) \ .
\end{equation}
Let us define $\tau^\varepsilon_0=0$ and for $n\ge 1$, 
\begin{equation}
\label{spike_times}
\tau^\varepsilon_n= \sigma^\varepsilon_1+ \ldots+\sigma^\varepsilon_n.
\end{equation} 
The resetting dynamics $X^\varepsilon$ is the c\`adl\`ag process defined for any $n \ge 0$ and $t \in [\tau^\varepsilon_n,\tau^\varepsilon_{n+1})$ by $X^\varepsilon_t = {x}^\varepsilon_t$. Observe that the sequence $(\tau^\varepsilon_n)_{n\ge 0}$ defines a renewal process on $(0,+\infty)$. The process is well defined since $\sup_{x \in [0,x^\varepsilon_*)} h(x) < \infty$ (no explosions). The generator $\mathcal L^\varepsilon$ of this process acts on differentiable test functions $f: [0,1] \to \mathbb R$ as 
\begin{equation}
\label{eq:generator}
(\mathcal L^\varepsilon f )(x) = {\varepsilon}^{-1}  \omega^\varepsilon (x)   f' (x) \ + \ {\varepsilon}^{-1} h(x)  \left[  f(0) - f(x) \right] \ . 
\end{equation}
The Markov process $X^\varepsilon$ satisfies the standard hypotheses given in \cite[p. 62, standard conditions 24.8]{Davies93} and is therefore a strong Markov process \cite[Theorem 25.5, p.64]{Davies93}. See Fig. \ref{fig:trajectory}.

\begin{figure}[htbp]
    \centering
    \label{fig:trajectory}

\begin{tikzpicture}[scale=1.5]
  \draw[->] (0,0) -- (9,0) node[right] {$t$};
  \draw[->] (0,0) -- (0,3) node[above] {$X_t^\varepsilon$};
  
  \draw[dashed, green!70!black] (0,1.5) -- (9,1.5);
  \node[left, font=\small] at (0,1.5) {$y_*^\varepsilon$};
  \draw[dashed, gray] (0,2) -- (9,2);
  \node[left, font=\small] at (0,2) {$x_*^\varepsilon$};
  
  
  \draw[thick, blue, smooth] plot coordinates {
    (0,0) (0.2,0.1) (0.4,0.25) (0.6,0.4) (0.8,0.5)
  };
  \draw[orange, thick, ->] (0.8,0.5) -- (0.8,0);
  \fill[orange] (0.8,0.5) circle (1.5pt);
  \node[below, font=\tiny] at (0.8,-0.15) {$\tau_1^\varepsilon$};
  
  \draw[thick, blue, smooth] plot coordinates {
    (0.8,0) (1.0,0.2) (1.2,0.5) (1.4,0.9) (1.5,1.1)
  };
  
  \draw[orange, thick, ->] (1.5,1.1) -- (1.5,0);
  \fill[orange] (1.5,1.1) circle (1.5pt);
  \node[below, font=\tiny] at (1.5,-0.15) {$\tau_2^\varepsilon$};
  
  \draw[thick, blue, smooth] plot coordinates {
    (1.5,0) (1.7,0.4) (1.9,0.9) (2.1,1.3) (2.3,1.5)
  };

  \draw[thick, blue, smooth] plot coordinates {
    (2.3,1.5) (2.4,1.57) (2.7,1.64) (2.9,1.66) (3.1,1.68) 
    (3.3,1.70) (3.5,1.72) (3.7,1.74) (3.9,1.76) (4.1,1.78)
    (4.3,1.80) (4.5,1.82) (4.7,1.84) (4.9,1.86) (5.0,1.87)
  };

  \draw[blue, dashed, thick] (2.3,1.5)--(2.3,0);

 \fill[blue] (2.3,1.5) circle (1.5pt);
 \node[below, font=\tiny] at (2.3,-0.15) {$e_1^\varepsilon$};

  \draw[orange, thick] (5.0,0) -- (5.0,1.0);
    \node[below, font=\tiny] at (5.0,-0.15) {$\tau_3^\varepsilon=e_2^\varepsilon$};
  \fill[orange] (5,1.87) circle (1.5pt);
  \draw[orange, thick, ->] (5.0,1.87) -- (5.0,0);
  
  
  \draw[thick, blue, smooth] plot coordinates {
    (5.0,0) (5.2,0.2) (5.4,0.5) (5.6,0.8)
  }; 
  
  \draw[thick, blue, smooth] plot coordinates {
    (5.6,0) (5.8,0.3) (6.0,0.7) (6.2,1.2)
  }; 
  
  \draw[thick, blue, smooth] plot coordinates {
    (6.2,0) (6.4,0.6) (6.6,1.2) (6.8,1.5)
  };
  
  \draw[thick, blue, smooth] plot coordinates {
    (6.8,1.5) (7.0,1.62) (7.2,1.70) (7.4,1.76) (7.6,1.78)
    (7.8,1.80) (8.0,1.82) (8.2,1.84) (8.4,1.86) (8.6,1.88)
  };

 \draw[orange, thick, ->] (5.6,0.8) -- (5.6,0);
  \fill[orange] (5.6,0.8) circle (1.5pt);
  \node[below, font=\tiny] at (5.6,-0.15) {$\tau_4^\varepsilon$};

  \draw[orange, thick, ->] (6.2,1.2) -- (6.2,0);
  \fill[orange] (6.2,1.2) circle (1.5pt);
  \node[below, font=\tiny] at (6.2,-0.15) {$\tau_5^\varepsilon$};
  
  \fill[blue] (6.8,1.5) circle (1.5pt);
  \node[below, font=\tiny] at (6.8,-0.15) {$e_3^\varepsilon$};

  \draw[blue, thick, dashed] (6.8,1.5)--(6.8,0);
  
  \draw[orange, thick] (8.6,0) -- (8.6,1.0);
   \node[below, font=\tiny] at (8.6,-0.15) {$\tau_6^\varepsilon=e_4^\varepsilon$};
  \draw[orange, thick, ->] (8.6,1.88) -- (8.6,0);
  
   \fill[orange] (8.6,1.88) circle (1.5pt);

  \draw[->] (8,0) -- (9,0) node[right] {$t$};

  \draw[blue, thick] (0.5,2.8) -- (1.2,2.8);
  \node[right, font=\tiny] at (1.2,2.8) {$x_t^\varepsilon$};
  
  \draw[red, thick, ->] (3.0,2.8) -- (3.2,2.8);
  \node[right, font=\tiny] at (3.2,2.8) {Reset};
  
  \fill[orange] (4.7,2.8) circle (1.5pt);
  \node[right, font=\tiny] at (4.9,2.8) {Pre-spikes};
  
\end{tikzpicture}

\caption{A formal realisation of the PDMP $(X_t^\varepsilon)_{t \ge 0}$. }
\end{figure}

\bigskip

We are interested in the limit as $\varepsilon$ goes to zero of the time-space simple point process in $[0,\infty) \times [0,1]$  
\begin{equation}\label{measure_1}
    \mathbb Q^\varepsilon = \sum_{i \ge 1} \ \delta_{(\tau_i^\varepsilon,z_i^\varepsilon)} \ ,
\end{equation}
where the random points $(\tau_i^{\varepsilon}, z_i^\varepsilon):=(\tau^\varepsilon_i, X^\varepsilon_{{\tau^\varepsilon_i} \, -} )$, $i \ge 1$, are called the {\textit{pre-spikes}} and $\tau_i^\varepsilon$ the {\textit{pre-spikes times}}. It is conjectured in \cite{SBDKC25} that $(\mathbb Q^\varepsilon)_{\varepsilon>0}$ converges as $\varepsilon$ goes to zero to the decorated Poisson point process $\mathbb Q$ defined as follows.\\

Let $({\bar X}_t)_{t \ge 0}$ be the continuous pure jump Markov process on $\{0,1\}$ with rate $f(0)$ to jump from $0$ to $1$ and rate $f(1)$ to jump from $1$ to $0$. Let $(t,M_t)_{t \ge 0}$ be a time-space simple point process on $[0,\infty) \times [0,1]$ with support denoted $\mathbb M:=\{(t, M_t) \; ; \; t\ge 0\} \subset \mathbb [0,\infty)\times [0,1)$ and being defined as the Poisson point process with intensity measure
\begin{equation}
\label{full_int_meas}
\lambda_* = f(0)^2  \  \frac{dt \otimes dx}{x^2} \,{\bf 1}_{x \in [0,1]} 
\end{equation}
independent of ${\bar X}$. The points in $(t, \mathbb M_t) \in \mathbb M$ such that $\bar X_t=0$  are called spikes and are, in some sense to precize, the limit of the pre-spikes as $\varepsilon$ goes to zero. Note that the intensity measure is singular around $0$. However, it is not a problem of definition  if we restrict $\mathbb Q^\varepsilon$ (resp. $\mathbb Q$) to a given compact set in the form $[0,t] \times[\delta, 1]$ where $\delta \in (0,1]$, $t>0$. The point process $\mathbb Q$ is defined as
\begin{equation}
\label{eq:mathbbQ}
\mathbb Q = \sum_{(t, M_t) \in \mathbb M}  {\bf 1}_{{\bar X}_t = 0}\  \delta_{(t, M_t)} \ .
\end{equation}

Our main theorem is the following:

\begin{theorem}
\label{thm:mainone}
Let us fix $\delta \in (0,1]$ and $T>0$. We equip the space of Borel measures on the compact set $[0,T] \times [\delta, 1]$ with the weak topology.  We have that the restriction of the simple point process $(\mathbb Q^\varepsilon)_{\varepsilon>0}$ to $[0,T] \times [\delta, 1]$ converges in law to the restriction of the simple point process $\mathbb Q$ to $[0,T] \times [\delta, 1]$.
\end{theorem}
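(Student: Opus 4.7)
The plan is to prove the convergence of $\mathbb Q^\varepsilon$ to $\mathbb Q$ via convergence of Laplace functionals, exploiting the i.i.d.\ renewal structure of the inter-spike times $\sigma_i^\varepsilon$ together with a slow--fast decomposition of the PDMP into a macroscopic two-state chain plus rare ``spike'' events in $[\delta,1]$. The first step will be a sharp $\varepsilon \to 0$ asymptotic analysis of the survival function: after the change of variable $x = x^\varepsilon_t$ one has
$$\mu^\varepsilon_{t(x)} \;=\; \exp\!\left(-\int_0^{x} \frac{dy}{y + \varepsilon f(y)/h(y)}\right),$$
and matched inner/outer asymptotic expansions---with inner variable $x/\varepsilon$, outer variable $x$ of order $1$, and a further boundary layer of width $\varepsilon/|h'(1)|$ around $x^\varepsilon_*$---should yield $\mu^\varepsilon_{t(x)} \sim \varepsilon f(0)/(h(0)\,x)$ uniformly on compact subsets of $(0, x^\varepsilon_*)$ bounded away from $x^\varepsilon_*$, together with an explicit description of the survival inside the boundary layer. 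In particular, with probability of order $\varepsilon f(0)/h(0)$ a single cycle reaches the attractor and then jumps after an additional $\mathrm{Exp}(|f(1)|)$ time, while otherwise the cycle terminates in the inner region with a pre-spike of size $O(\varepsilon)$. This is the delicate boundary layer analysis announced in Section~\ref{sec:nj-laplace-generating-convergence}.

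Next I will introduce a macroscopic state $\bar X^\varepsilon_t \in \{0,1\}$ recording whether $X^\varepsilon_t$ lies inside the $\varepsilon$-boundary layer around $x^\varepsilon_*$, and show that $\bar X^\varepsilon$ converges in law on $[0,T]$ (Skorokhod topology) to the two-state chain $\bar X$ with rate $f(0)$ from $0$ to $1$---the effective rate at which an inner cycle escapes to the attractor---and rate $|f(1)|$ from $1$ to $0$---the exponential clock at $x^\varepsilon_*$. The Step~1 asymptotics identify these rates, and convergence then follows from a standard averaging / generator convergence argument à la Ethier--Kurtz. Consequently each $\bar X^\varepsilon = 0$ phase has duration $\sim 1/f(0)$ and contains of order $\varepsilon^{-1}$ short cycles, while each $\bar X^\varepsilon = 1$ phase has duration $\sim 1/|f(1)|$ and consists of a single cycle ending near $x^\varepsilon_*$.

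With these ingredients in hand I will compute the Laplace functional $\mathcal L_\varepsilon(\varphi) := \mathbb E[\exp(-\int \varphi\, d\mathbb Q^\varepsilon)]$ for non-negative continuous $\varphi$ supported on $[0,T]\times[\delta,1]$ by conditioning on the first cycle and iterating the resulting renewal identity. Inside a $\bar X^\varepsilon = 0$ phase the $\sim \varepsilon^{-1}$ inner cycles each have probability of order $\varepsilon$ of producing a pre-spike in the window $[\delta,1]$, with conditional location density proportional to $x^{-2}$ by Step~1; the classical Poisson limit theorem for rare independent Bernoulli trials then yields that these in-window pre-spikes form an asymptotic Poisson process, and combining this with the joint convergence of $\bar X^\varepsilon$ from Step~2 produces the Cox-process structure of $\mathbb Q$ with intensity $\mathbf 1_{\bar X_t = 0}\,f(0)^2 x^{-2}\,dt \otimes dx$. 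Convergence of $\mathcal L_\varepsilon(\varphi)$ to the corresponding Laplace functional of $\mathbb Q$ for all such $\varphi$ is then, via the classical Laplace-functional form of Kallenberg's characterisation of simple point processes, the desired weak convergence on the compact set $[0,T]\times[\delta,1]$.

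The main obstacle I anticipate is the treatment of the pre-spikes whose cycle has reached the attractor: these concentrate in the layer of width $O(\varepsilon)$ around $x^\varepsilon_* \to 1$, so in the weak topology on $[0,T]\times[\delta,1]$ they could a priori produce a spurious atom at $x=1$ absent from the limit $\mathbb Q$. Showing that this boundary mass is correctly absorbed by the $1 \to 0$ transitions of the macroscopic chain $\bar X$ and does not persist as additional spikes in the limit will require the refined boundary layer asymptotics of Step~1 together with tightness estimates for the joint process $(\bar X^\varepsilon, \mathbb Q^\varepsilon|_{[0,T]\times[\delta,1]})$ and a careful identification of the limit via the Laplace-functional computation outlined above.
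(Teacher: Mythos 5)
Your proposal follows the same overall strategy as the paper: the sharp asymptotics $\mu^\varepsilon_{t(x)} \sim \varepsilon f(0)/(h(0)x)$ (the paper's Lemmas \ref{lem:expU}--\ref{lem:expV} on $U^\varepsilon$ and $V^\varepsilon$, proved by exactly the inner/outer matching with an intermediate scale $\delta(\varepsilon)$ that you describe), the decomposition into a macroscopic two-state chain with rates $f(0)$ and $|f(1)|$ decorated by rare in-cycle spikes, and a Poisson limit with intensity $f(0)x^{-2}\,dt\,dx$ per no-jump phase. Where you differ is in the machinery for two steps. For the two-state chain the paper does not use an Ethier--Kurtz generator argument (which would be delicate here, since your $\bar X^\varepsilon$ is not itself Markov); it simply shows that the independent excursion durations converge to exponentials (Corollary \ref{cor:e_1} and Proposition \ref{prop:e_2}), which is what you would end up doing anyway. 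For the final identification the paper avoids Laplace functionals: it computes the time-Laplace transform of the generating function of the spike counts through the renewal/combinatorial formula \eqref{eq:Pc}, inverts it with a Tauberian lemma, proves convergence of finite-dimensional distributions by induction on the number of time windows using the residual-lifetime stopping time $\xi_s^\varepsilon$, establishes tightness by stochastic domination and a martingale/renewal bound, and concludes with R\'enyi's theorem on void probabilities. Your Laplace-functional route through Kallenberg's characterisation is a legitimate alternative, but be aware that the ``Poisson limit for rare independent Bernoulli trials'' step conceals the same renewal-equation work (random number of cycles per window, tilting by the no-jump conditioning) that the paper carries out explicitly through $C^\varepsilon$, $D^\varepsilon$, $E^\varepsilon$.

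One further remark: you are right to flag the reset that terminates each up-phase, which is a genuine pre-spike of $\mathbb Q^\varepsilon$ located at $x^\varepsilon_*\to 1\in[\delta,1]$, one per $1\to 0$ transition. The paper handles this only implicitly (it declares $\mathbb Q_1^\varepsilon\to 0$ and treats the $\mathbb Q_{2k}^\varepsilon$, $k\ge 1$, as copies of $\mathbb Q_0^\varepsilon$, which silently discards the atom at time $e_{2k}^\varepsilon$), so your plan to show this mass is ``absorbed'' by the macroscopic transitions will require an explicit argument; in the weak topology on the closed rectangle this boundary mass does not obviously vanish, and this is the one step where neither your sketch nor the paper's write-up is fully conclusive.
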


\begin{remark}
It remains quite mysterious that the intensity of the limiting Poisson point process appearing here is proportional to that appearing in \cite{BCCNP23}, which concerns SDEs driven by Brownian noises. In \cite{BCCNP23}, the intensity in the form $x^{-2} dx$ is clearly understood in the It\^o Brownian excursion theory framework. In the current paper, the authors do not understand why the same intensity appears again.
\end{remark}

\begin{remark}
The novelty of the paper with respect to \cite{SBDKC25} is twofold. First we generalise the analysis performed there by assuming quite generic drift and resetting terms, without hence relying to explicit Laplace transforms. Secondly, we justify rigorously Theorem \ref{thm:mainone} where several steps were missing in \cite{SBDKC25}.
\end{remark}

\bigskip

The paper is organized as follows. Section \ref{sec:convfiniterectangles} studies the convergence of the point process conditionally on the event of no jump. Section \ref{sec:proofofThm} then proves the main Theorem \ref{thm:mainone} by establishing the convergence of the point process to a Poisson point process. The key asymptotic analysis for the generating function is carried out in Section \ref{sec:nj-laplace-generating-convergence}, where Proposition \ref{prop:nj-laplace-generating-convergence} is proved. The paper also includes an appendix with several technical lemmas that support the main arguments.

\section{Rectangle convergence conditionally to no-jump}
\label{sec:convfiniterectangles}

For any $c\in (0,1)$, we define the deterministic time $T^\varepsilon_c$ as the hitting time to $c$ by the deterministic process $(x^\varepsilon_t)_{t \ge 0}$: 
\begin{equation}
\begin{split}
T^\varepsilon_c = \inf \{ t \ge 0 \; ; \; x^\varepsilon_t =c\} \ .
\end{split}
\end{equation}
We also define the time
\begin{equation}
\begin{split}
T_*^\varepsilon:= T^\varepsilon_{y^\varepsilon_*}, \quad y_*^\varepsilon = 1- \varepsilon^{\beta}
\end{split}
\end{equation}
where $\beta \in (0,1/2)$ is chosen arbitrarily. It is not difficult to show that as $\varepsilon$ goes to zero, we have that 
\begin{equation}
\label{eq:xstarepsilon}
\begin{split}
x^\varepsilon_*  \sim 1 -  \frac{f(1)}{\vert h'(1) \vert } \varepsilon \ .
\end{split}
\end{equation}
For $\varepsilon$ sufficiently small, we have then
\begin{equation}
\begin{split}
y^\varepsilon_* \le x^\varepsilon_* \ ,
\end{split}
\end{equation}
and \begin{equation}
\begin{split}
T_c^\varepsilon < \infty, \quad T^\varepsilon_{*} < \infty \quad \mathbb P \ \text{a.s.} \ .
\end{split}
\end{equation}
Roughly speaking the time $T_*^\varepsilon$ represents the time for the deterministic dynamics to travel from $0$ to $1$. The interested reader can check that for any $c \in (0,1)$ we have in fact that 
\begin{equation}
\begin{split}
T^\varepsilon_c= -\frac{1}{h(0)} \varepsilon  \log \varepsilon  + \gamma (c) \varepsilon + o (\varepsilon) 
\end{split}
\end{equation}
where 
\begin{equation}
\begin{split}
\gamma (c) = \frac{1}{h(0)}  \log \left(\frac{h(0)}{f(0)} \right) +  \int_{0}^c \left[ \frac{1}{y h(y)} -\frac{1}{h(0)y} \right] \, dy  +  \frac{1}{h(0)} \log c \ ,
\end{split}
\end{equation}
and hence
\begin{equation}
\label{eq:asymptoticsT_*}
\begin{split}
T^\varepsilon_*\sim_{\varepsilon \to 0}  -\frac{1}{h(0)} \varepsilon  \log \varepsilon \ .
\end{split}
\end{equation}
We define the stopping time $e_1^\varepsilon$ as{\footnote{Since $(y_*^\varepsilon, \infty)$ is an open set the random time $e_1^\varepsilon$ is indeed a stopping time.}}
\begin{equation}
\begin{split}
e_1^\varepsilon:= \inf\{t \ge 0\; ; \; X_t^\varepsilon > y_*^ \varepsilon\} \ .
\end{split}
\end{equation}
Observe that since $X^\varepsilon$ has c\`adl\`ag trajectories, we have that $X^{\varepsilon}_{e_1^\varepsilon} = y_*^\varepsilon = \inf\{t \ge 0\; ; \; X_t^\varepsilon = y_*^ \varepsilon\} = \inf\{t \ge 0\; ; \; X_t^\varepsilon \ge y_*^ \varepsilon\}$. See Fig. \ref{fig:trajectory}.


\begin{definition}
Given $0< t \le T$, we say that the process $X^\varepsilon$ does not jump from $0$ to $1$ during the time interval $(0,t)$ if and only if for any $r \in (0,t)$, we have $0 \le X_r^\varepsilon \le y_*^\varepsilon$, i.e. $e_1^\varepsilon \ge t$. This is equivalent to saying that for all $i\ge 1$ such that $\tau_i^\varepsilon \in (0,t)$, the condition $0<\tau^{\varepsilon}_{i+1} -\tau_i^\varepsilon<T_*^\varepsilon$ holds. 
\end{definition}

\bigskip

For any positive time $t>0$ and any sequence of $n \ge 1$ times $0<t_1< t_2< \ldots<t_n< t$, we denote by $p_t^\varepsilon (t_1,\ldots, t_n)$ the probability of observing no jumps and a sequence of exactly $n$ pre-spikes in the time interval $(0,t)$ occurring at times $t_1, \ldots,t_n$, i.e.
\begin{equation}
\begin{split}
p_t^\varepsilon (t_1,\ldots, t_n) = \mathbb P \left( \{ \tau_1^\varepsilon=t_1, \ldots, \tau_{n}^\varepsilon = t_n\}  \, \cap \,  \{\tau_{n+1}^\varepsilon \ge t\} \,  \cap \,  \{ e_1^\varepsilon \ge t\} \right) \ .
\end{split}
\end{equation}
Let then $P^\varepsilon_{nj}(n,t : a,b)$ be the joint probability  to observe exactly $n$ pre-spikes in the time interval $(0,t)$ and in the space interval $[a,b]$ (where $0<a\le b\le1$), starting from $0$, and such that no jumps occur in the time interval $(0,t)$, i.e.  
\begin{equation}
\begin{split}
&P^{\varepsilon}_{nj}(n,t : a,b) \\
&= \mathbb P \left( \{ \tau_1^\varepsilon=t_1, \ldots, \tau_{n}^\varepsilon = t_n\}  \, \cap \,  \{\tau_{n+1}^\varepsilon \ge t\} \, \cap \, \{ \forall i \in \{1,\ldots,n\}, \ z_{i}^\varepsilon \in [a,b] \} \,  \cap \,  \{ e_1^\varepsilon \ge t\} \right) \ . 
\end{split}
\end{equation}

Recall the definition of $\mu^\varepsilon$ in Eq. \eqref{eq:mu_t_def}. Following the combinatorial arguments in \cite{SBDKC25}, the probability $P_{nj} (n, t : a,b)$ is given by 
\begin{equation}
\label{eq:Pc}
\begin{split}
&P_{nj}^\varepsilon (n, t : a,b)\\
&= \sum_{m=0}^\infty \frac{(n+m)!}{n! m!} \prod_{i=1}^{n+m} \int_0^{t_{i+1}} dt_i  ~ p^\varepsilon_t (t_1,t_2,\ldots,t_{n+m}) \\
&\times \prod_{j=1}^m [~\Theta(T^\varepsilon_a- (t_{j}-t_{j-1}))  + \Theta((t_{j}-t_{j-1})-T^\varepsilon_b)~]\\ 
&\times \prod_{j=1}^n  \Theta((t_{j}-t_{j-1})-T^\varepsilon_a) ~ \Theta(T^\varepsilon_b-(t_{j}-t_{j-1}))\\
&\times \prod_{j=1}^{n+m} \Theta(T_*^\varepsilon- (t_{j}-t_{j-1}))~\Theta(T_*^\varepsilon-(t-t_{n+m})) \ , 
\end{split}
\end{equation}
where we have defined $t_{n+m+1}=t$ and $\Theta$ is the Heaviside function. Taking time-Laplace transform $\hat{P}_{nj}^\varepsilon (n:\sigma,a,b)= \int_{0}^{\infty} dt e^{-\sigma t} P_{nj}^{\varepsilon} (n,t : a,b)$ gives
\begin{equation}
\label{eq:Laplce-cond}
   \hat{P}_{nj}^\varepsilon (n, \sigma : a,b)= \sum_{m=0}^\infty \frac{(n+m)!}{n! m!} 
   [C^\varepsilon (\sigma)]^m [D^\varepsilon (\sigma)]^n E^\varepsilon(\sigma) \ ,
\end{equation}
where 
\begin{equation}
\label{eq:CDE}
\begin{split}
C^\varepsilon (\sigma)&=\varepsilon^{-1} \int_0^\infty dt ~\mu_t^\varepsilon\,  h (x_t^\varepsilon)~ [ \Theta(T_a^\varepsilon-t) + \Theta (t-T_b^\varepsilon)] ~ \Theta (T_*^\varepsilon-t) ~e^{-\sigma t}\ , \\ 
D^{\varepsilon} (\sigma)&=\varepsilon^{-1} \int_0^\infty dt ~\mu^\varepsilon_t \, h (x_t^\varepsilon) ~\Theta(T_a^\varepsilon-t) \Theta (t-T_b^\varepsilon)  e^{-\sigma t} \ , \\
E^{\varepsilon} (\sigma)&=\int_0^\infty dt~ \mu^\varepsilon_t~ \Theta(T_*^\varepsilon-t)~ e^{-\sigma t} \ .
\end{split}
\end{equation}
Performing the summation over $m$, we get
\begin{align}
   \hat{P}^\varepsilon_{nj}(n,\sigma
   : a,b)= \frac{  [D^\varepsilon(\sigma)]^n E^\varepsilon (\sigma) }{[1-C^\varepsilon(\sigma)]^{1+n}} \ . 
   \end{align}
Defining the generating function ($0\le z \le 1$)
\begin{equation}
Z^\varepsilon(z,\sigma :  a,b)=\sum_{n=0}^\infty z^n \hat{P}^\varepsilon_{nj}(n:\sigma,a,b) \ ,
\label{eq:Zc}
\end{equation}
we find the exact formula
\begin{align}
   \label{eq:functionZ}
   Z^\varepsilon (z, \sigma : a,b)= \frac{ E^\varepsilon(\sigma) }{1-C^\varepsilon (\sigma)-z D^\varepsilon (\sigma) } \ . 
   \end{align}
   
 \begin{proposition}
 \label{prop:nj-laplace-generating-convergence}
For any $\sigma \ge 0$, and $0<a<b\le1$, let us define the analytic function
\begin{equation}
\begin{split}
Z(\cdot, \sigma : a,b): z \in D_{\sigma} (a,b) \mapsto Z(z,\sigma : a,b ):=\frac{1}{\sigma + f(0) + (1-z) f(0) (1/a -1/b)} \in \mathbb C 
\end{split}
\end{equation}
on the open disc $D_\sigma (a,b)$ of radius $R_\sigma (a,b)$, where
\begin{equation}
\begin{split}
R_\sigma (a,b) = 1+\cfrac{\sigma + f(0)}{f(0) (1/a -1/b)} >1 \ .
\end{split}
\end{equation}
Then for any $z\in D_{\sigma} (a,b)$ we have 
 \begin{equation}
\begin{split}
\lim_{\varepsilon \to 0} Z^\varepsilon (z,\sigma : a,b) = Z(z,\sigma : a,b ) \ .
\end{split}
\end{equation}
\end{proposition}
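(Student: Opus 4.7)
My plan exploits an integration-by-parts identity relating $C^\varepsilon$, $D^\varepsilon$, $E^\varepsilon$, and the survival probability $\mu_{T_*^\varepsilon}^\varepsilon$ of the flow at the exit time $T_*^\varepsilon$. Since $-\tfrac{d}{dt}\mu_t^\varepsilon = \varepsilon^{-1} h(x_t^\varepsilon) \mu_t^\varepsilon$, and the indicator functions appearing in $C^\varepsilon$ and $D^\varepsilon$ partition $[0,T_*^\varepsilon]$, one has
\[
C^\varepsilon(\sigma) + D^\varepsilon(\sigma) = \varepsilon^{-1} \int_0^{T_*^\varepsilon} h(x_t^\varepsilon) \mu_t^\varepsilon e^{-\sigma t}\, dt,
\]
and integration by parts yields the clean identity
\[
1 - C^\varepsilon(\sigma) - D^\varepsilon(\sigma) = \mu_{T_*^\varepsilon}^\varepsilon e^{-\sigma T_*^\varepsilon} + \sigma E^\varepsilon(\sigma).
\]
Rewriting the denominator in $Z^\varepsilon$ as $(1 - C^\varepsilon - D^\varepsilon) + (1-z)D^\varepsilon$ reduces the proposition to sharp leading-order asymptotics for the three scalars $\mu_{T_*^\varepsilon}^\varepsilon$, $E^\varepsilon(\sigma)$ and $D^\varepsilon(\sigma)$.

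To compute them I would change time to space through $t \mapsto x = x_t^\varepsilon$ with $dt = \varepsilon\, dx /(\varepsilon f(x)+xh(x))$, and use the closed-form expression
\[
\mu_t^\varepsilon = \exp\!\left(-\int_0^{x_t^\varepsilon} \frac{h(y)\, dy}{\varepsilon f(y) + y h(y)}\right),
\]
obtained by rewriting the ODE for $\mu_t^\varepsilon$ as a function of $x_t^\varepsilon$. This representation makes explicit the boundary-layer nature of the problem: the integrand degenerates like $1/y$ near the origin and is only regularised by the $\varepsilon f(y)$ term. Each $x$-integral is then split at $\varepsilon^\alpha$ (for some $\alpha \in (0,1)$) into a \emph{boundary layer} $[0,\varepsilon^\alpha]$ and a \emph{bulk} $[\varepsilon^\alpha, y_*^\varepsilon]$. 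On the bulk, the $\varepsilon f$ terms are subdominant, a dominated-convergence argument applies, and $\mu_t^\varepsilon$ is asymptotic to $\varepsilon f(0)/(h(0) x)$. On the boundary layer I rescale $u = h(0)x/(\varepsilon f(0))$ and use the linearisations $f(x) \approx f(0)$, $h(x) \approx h(0)$ to extract an explicit universal limiting integrand proportional to $(1+u)^{-2}$.

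Matching these two regimes gives the key asymptotics
\[
\mu_{T_*^\varepsilon}^\varepsilon \sim \frac{\varepsilon f(0)}{h(0)}, \quad E^\varepsilon(\sigma) \sim \frac{\varepsilon}{h(0)}, \quad D^\varepsilon(\sigma) \sim \frac{\varepsilon f(0)}{h(0)}\!\left(\frac{1}{a} - \frac{1}{b}\right),
\]
which, combined with $\sigma T_*^\varepsilon \to 0$ from \eqref{eq:asymptoticsT_*}, yield
\[
1 - C^\varepsilon(\sigma) - z D^\varepsilon(\sigma) \sim \frac{\varepsilon}{h(0)}\bigl[\sigma + f(0) + (1-z) f(0)(1/a-1/b)\bigr].
\]
Dividing by $E^\varepsilon(\sigma) \sim \varepsilon/h(0)$ produces exactly $Z(z,\sigma:a,b)$. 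Since $C^\varepsilon$, $D^\varepsilon$, $E^\varepsilon$ do not depend on $z$, and the limiting denominator is nonzero precisely on $D_\sigma(a,b)$, pointwise convergence for every complex $z$ in the open disc is automatic once the three real asymptotics above have been established.

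The hard part is the boundary-layer analysis itself: naive dominated convergence fails in the integral defining $\log \mu_t^\varepsilon$ because of the non-integrable $1/y$ behaviour at $0$, so a matched-asymptotic treatment is required, and the errors must be of size $o(\varepsilon)$ so that sub-leading contributions in the numerator and denominator do not pollute the limit ratio. Uniformity of the bulk approximations in $a,b$ and control near $y = y_*^\varepsilon$, where $h$ is small, will require additional care; this is precisely the regime where the choice $y_*^\varepsilon = 1 - \varepsilon^\beta$ with $\beta \in (0,1/2)$, strictly below $x_*^\varepsilon$, becomes crucial.
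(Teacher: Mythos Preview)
Your proposal is correct and follows essentially the same route as the paper: the integration-by-parts identity you write is exactly the paper's identity $C^\varepsilon(\sigma)=1-e^{-\sigma\varepsilon U^\varepsilon(y_*^\varepsilon)-V^\varepsilon(y_*^\varepsilon)}-\sigma E^\varepsilon(\sigma)-D^\varepsilon(\sigma)$ (since $\mu_{T_*^\varepsilon}^\varepsilon e^{-\sigma T_*^\varepsilon}=e^{-\sigma\varepsilon U^\varepsilon(y_*^\varepsilon)-V^\varepsilon(y_*^\varepsilon)}$), and your time-to-space change of variables, boundary-layer splitting with rescaling near $0$, and target asymptotics for $\mu_{T_*^\varepsilon}^\varepsilon$, $E^\varepsilon(\sigma)$, $D^\varepsilon(\sigma)$ coincide with the paper's Section~\ref{sec:nj-laplace-generating-convergence} and Lemmas~\ref{lem:expU}--\ref{lem:expV}. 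The only cosmetic difference is that the paper packages the boundary-layer estimates into uniform expansions of the auxiliary functions $U^\varepsilon$, $V^\varepsilon$ (with an $\varepsilon$-dependent splitting scale $\delta\sim\varepsilon^{1/2-\alpha}/\mathfrak h(x)$) rather than splitting each integral separately at $\varepsilon^\alpha$, but the analytic content is the same.
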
 

\begin{proof}
This is proved in Section \ref{sec:nj-laplace-generating-convergence} by studying the asymptotic behaviour of the functions $C^\varepsilon(\sigma), D^{\varepsilon} (\sigma), E^{\varepsilon} (\sigma)$ defined in Eq. \eqref{eq:CDE}. In particular, there, we do not have to assume that $\sigma>0$ but only $\sigma\ge 0$. From this we have that the radius of convergence near $0$ of $Z^{\varepsilon} (\cdot, \sigma : a,b)$, which is equal to $[1-C^\varepsilon (\sigma)]/D^\varepsilon (\sigma)$, converges for small $\varepsilon$ to $R_\sigma (a,b)$.
\end{proof}

Observe that for $z=1$ and any $0<a\le b \le 1$, $Z^\varepsilon (1,\sigma : a,b)$ is the Laplace transform in time of the probability to not have jumps in the time interval $(0,t)$, i.e.{\footnote{In particular, the righthand side term does not depend on $a$ and $b$.}}
\begin{equation}
\begin{split}
Z^\varepsilon (1, \sigma : a,b) =\int_0^\infty e^{-\sigma t} \, \mathbb P ( e_1^\varepsilon \ge t ) \, dt \ .
\end{split}
\end{equation}
Hence we get that
\begin{equation}
\begin{split}
\lim_{\varepsilon \to 0} \int_0^\infty e^{-\sigma t} \, \mathbb P (e_1^\varepsilon \ge t ) \, dt =  \frac{1}{\sigma + f(0)} \ .
\end{split}
\end{equation}
This implies that

\begin{corollary}
\label{cor:e_1}
The sequence of random variables $(e_1^\varepsilon)_{\varepsilon>0}$ converges in distribution to an exponential random variable with parameter $f(0)$. 
\end{corollary}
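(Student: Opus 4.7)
The strategy is to deduce weak convergence of $e_1^\varepsilon$ directly from the convergence of the Laplace transform of its survival function (displayed immediately above the corollary), via the continuity theorem for Laplace transforms of nonnegative random variables.

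First I would rewrite the limit in terms of the Laplace transform of $e_1^\varepsilon$ itself. For any nonnegative random variable $X$ and any $\sigma>0$, a Fubini computation yields
\begin{equation*}
\int_0^\infty e^{-\sigma t}\, \mathbb{P}(X \ge t) \, dt \;=\; \frac{1 - \mathbb{E}\!\left[e^{-\sigma X}\right]}{\sigma}.
\end{equation*}
Applied to $X = e_1^\varepsilon$ and combined with the displayed limit preceding the corollary, this gives
\begin{equation*}
\lim_{\varepsilon \to 0} \mathbb{E}\!\left[e^{-\sigma e_1^\varepsilon}\right] \;=\; 1 - \frac{\sigma}{\sigma + f(0)} \;=\; \frac{f(0)}{\sigma + f(0)}
\end{equation*}
for every $\sigma > 0$. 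The right-hand side is exactly the Laplace transform of an exponential law of parameter $f(0)$.

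To conclude, I would invoke the classical continuity theorem for Laplace transforms of nonnegative random variables (see e.g.\ Feller, Volume II, Chapter XIII): pointwise convergence of Laplace transforms on $(0,\infty)$ to the Laplace transform of a probability measure implies convergence in distribution. Since the limit $\sigma \mapsto f(0)/(\sigma + f(0))$ extends continuously to $\sigma = 0^+$ with value $1$, no mass of $(e_1^\varepsilon)$ escapes to infinity and the limiting law is genuinely $\mathrm{Exp}(f(0))$. Thus $(e_1^\varepsilon)_{\varepsilon>0}$ converges in distribution to $\mathrm{Exp}(f(0))$.

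There is essentially no obstacle in this argument: the corollary is an immediate specialisation of Proposition \ref{prop:nj-laplace-generating-convergence} at $z=1$, repackaged through a standard Laplace-transform continuity theorem. The substantive work lies entirely inside that proposition, i.e.\ in the asymptotic analysis of the functions $C^\varepsilon, D^\varepsilon, E^\varepsilon$ carried out in Section \ref{sec:nj-laplace-generating-convergence}.
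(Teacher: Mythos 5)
Your argument is correct and matches the paper's (implicit) reasoning: the paper states the corollary as an immediate consequence of the convergence of $\int_0^\infty e^{-\sigma t}\,\mathbb P(e_1^\varepsilon\ge t)\,dt$ to $1/(\sigma+f(0))$, which is exactly the Laplace-transform continuity argument you spell out. The only content you add is the explicit identity $\int_0^\infty e^{-\sigma t}\,\mathbb P(X\ge t)\,dt=(1-\mathbb E[e^{-\sigma X}])/\sigma$, which is a harmless and standard way to make the deduction precise.
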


We define now the probability $P_c (n, t : a,b)$ to observe exactly $n$ pre-spikes in the interval $(0,t)\times[a,b]$, {\textit{given}} that no jump occurs in the time interval $(0,t)$, i.e.
\begin{equation}
\begin{split}
P^\varepsilon_c (n,  t : a,b):= \cfrac{P_{nj}^\varepsilon (n,t  : a,b)}{\mathbb P (e_1^\varepsilon \ge t)} \ .
\end{split}
\end{equation}

By Proposition \ref{prop:nj-laplace-generating-convergence} we get that
\begin{corollary}
\label{cor:Poisson0}
For any $t>0$ and any $0<a\le b\le1$, the probability distribution $P^\varepsilon_c( \cdot \, : \, t,a,b)$ over $\mathbb N_0$ converges in law to a Poisson distribution with parameter 
\begin{equation}
\begin{split}
f(0) \, t\,  \lambda_* ( [a,b]) = f(0)\,  t \, \int_a^b \cfrac{dx}{x^2} = f(0) \, t \,  (a^{-1}- b^{-1}) \  . 
\end{split}
\end{equation}
Furthermore, we have that, for any $n\ge 0$, the function
\begin{equation}
\begin{split}
\Phi_n^\varepsilon: r \in[0,\infty) \mapsto \mathbb P ( e_1^\varepsilon \ge r, N^\varepsilon_{(0,r]} ([a,b]) = n) \in [0,1]
\end{split}
\end{equation}
converges uniformly on any compact interval $[0,t]$, $t>0$, as $\varepsilon$ vanishes to 
\begin{equation}
\begin{split}
\Phi_n: r \in [0,\infty) \mapsto e^{-r f(0)} e^{-r \lambda_* ([a,b]} \frac{(r \lambda_* ([a,b]))^n}{n!} \in [0,1] \ .
\end{split}
\end{equation}
\end{corollary}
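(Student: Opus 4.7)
The strategy is to deduce the corollary from Proposition \ref{prop:nj-laplace-generating-convergence} in two steps: first extract convergence of the individual Laplace transforms $\hat P^\varepsilon_{nj}(n,\sigma:a,b)$ from the convergence of the full generating function $Z^\varepsilon(z,\sigma:a,b)$; then pass back from these Laplace transforms to the functions $\Phi_n^\varepsilon(r)$ themselves by isolating a genuinely monotone quantity on which the standard continuity theorem for Laplace transforms can be applied.

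For the first step, fix $\sigma\ge 0$ and pick any $z_0\in (1,R_\sigma(a,b))$. Since $Z^\varepsilon(\cdot,\sigma:a,b)$ is a power series with non-negative coefficients, the bound $z_0^n\,\hat P^\varepsilon_{nj}(n,\sigma:a,b)\le Z^\varepsilon(z_0,\sigma:a,b)$ holds, and its right-hand side stays bounded in $\varepsilon$ because it converges to $Z(z_0,\sigma:a,b)<\infty$ by Proposition \ref{prop:nj-laplace-generating-convergence}. A diagonal extraction combined with the pointwise convergence $Z^\varepsilon\to Z$ on $D_\sigma(a,b)$ and uniqueness of power-series coefficients then forces $\hat P^\varepsilon_{nj}(n,\sigma:a,b)$ to converge, for every $n\ge 0$ and every $\sigma\ge 0$, to the $n$-th Taylor coefficient of $Z(\cdot,\sigma:a,b)$. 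A geometric series expansion in $z$ shows this Taylor coefficient coincides with $\int_0^\infty e^{-\sigma r}\,\Phi_n(r)\,dr$, where $\Phi_n$ is the candidate limit function from the statement.

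For the second step, introduce the cumulative
\begin{equation*}
\Theta_n^\varepsilon(r):=\sum_{k=0}^{n}\Phi_k^\varepsilon(r)=\mathbb P\bigl(e_1^\varepsilon\ge r,\; N^\varepsilon_{(0,r]}([a,b])\le n\bigr).
\end{equation*}
In contrast to the individual $\Phi_k^\varepsilon$, the function $\Theta_n^\varepsilon$ is non-increasing in $r$ with values in $[0,1]$, because both constituent events shrink as $r$ grows. Its Laplace transform equals $\sum_{k=0}^{n}\hat P^\varepsilon_{nj}(k,\sigma:a,b)$, and by the first step it converges, for every $\sigma\ge 0$, to the Laplace transform of the continuous limit $\Theta_n:=\sum_{k=0}^{n}\Phi_k$. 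The continuity theorem for Laplace transforms of non-increasing bounded functions (equivalently, Helly's selection theorem combined with Laplace-transform uniqueness) yields pointwise convergence $\Theta_n^\varepsilon(r)\to\Theta_n(r)$ at every continuity point of $\Theta_n$, hence at every $r\ge 0$; since both sides are monotone and the limit is continuous, this upgrades to uniform convergence on every compact $[0,t]$ by Dini's classical argument.

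Taking differences $\Phi_n^\varepsilon=\Theta_n^\varepsilon-\Theta_{n-1}^\varepsilon$ (with $\Theta_{-1}^\varepsilon\equiv 0$) produces the uniform convergence $\Phi_n^\varepsilon\to\Phi_n$ on $[0,t]$, which is the second assertion of the corollary. The first assertion follows at once by writing $P^\varepsilon_c(n,t:a,b)=\Phi_n^\varepsilon(t)/\mathbb P(e_1^\varepsilon\ge t)$, noting that the denominator converges to $e^{-t f(0)}$ by Corollary \ref{cor:e_1} and the numerator to $\Phi_n(t)$, and recognising the limiting ratio as the Poisson mass at $n$ with the stated parameter. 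The main subtlety is precisely that the $\Phi_n^\varepsilon$ themselves are not monotone in $r$, so the Laplace continuity theorem cannot be invoked on them directly; passing through the monotone cumulatives $\Theta_n^\varepsilon$ and taking differences only at the very end is the natural bypass.
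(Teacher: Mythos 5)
Your proposal is correct and follows essentially the same route as the paper: extract convergence of the Taylor coefficients of $Z^\varepsilon(\cdot,\sigma:a,b)$ (the paper uses Cauchy's formula where you use uniform coefficient bounds plus diagonal extraction, which is equivalent), then pass to the monotone cumulative sums $\sum_{k\le n}\Phi_k^\varepsilon$, apply the Helly/Laplace-uniqueness/Dini argument (this is exactly the paper's Lemma~\ref{lem:Tauber}), and recover $\Phi_n^\varepsilon$ by differencing. You also correctly identify the key subtlety — the non-monotonicity of the individual $\Phi_n^\varepsilon$ — which is precisely why the paper routes the argument through the cumulatives.
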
 

\begin{proof}
The first statement of the corollary follows straightforwardly from the second one. Hence, we prove now the second. Since $0 < a \le b\le1$ are fixed, we do not write the dependence in $a,b$ of the involved functions.  Fix $\sigma > 0$ and define for each $n \ge 0$:
\[
F^\varepsilon_\sigma(n) = \int_0^\infty e^{-\sigma s} \Phi_n^\varepsilon(s) \, ds, \quad
F_\sigma(n) = \int_0^\infty e^{-\sigma s} \Phi_n (s) \, ds \ .
\]
Then, for any $z \in \mathbb C$ such that $|z| < R_\sigma (a,b)$, for $\varepsilon$ sufficiently small, we have that
\[
Z^\varepsilon(z,\sigma) = \sum_{n=0}^\infty F^\varepsilon_\sigma(n) z^n, \quad
Z(z,\sigma) = \sum_{n=0}^\infty F_\sigma(n) z^n \ .
\]
The radius of  convergence of the power series $Z^\varepsilon (\cdot, \sigma)$ and $Z(\cdot,\sigma)$ are strictly bigger than one (uniformly as $\varepsilon$ goes to $0$). By Cauchy's formula for analytic functions we deduce that
\[
\lim_{\varepsilon \to 0} F^\varepsilon_\sigma(n) = F_\sigma(n) \ .
\]
Fix $n \in \mathbb N_0$ and observe that for any $s\le t$, we have 
\begin{equation}
\begin{split}
\{ e_1^\varepsilon \ge t, \; N_{(0,t]}^\varepsilon \le n \} \subset \{ e_1^\varepsilon \ge s, \; N_{(0,s]}^\varepsilon \le n \} \ ,
\end{split}
\end{equation}
so that, for any $\varepsilon>0$, the (continuous)  function $h^{\varepsilon}: t \in [0, \infty) \mapsto h^\varepsilon (t) = \sum_{k=0}^n \Phi^\varepsilon_k (t)$, is  non-increasing, positive and bounded by $n$. The same holds for the function $h: t \in [0, \infty) \mapsto h^\varepsilon (t) = \sum_{k=0}^n \Phi_k (t)$. By Lemma \ref{lem:Tauber}, we have that $(h^\varepsilon)_{\varepsilon>0}$ converges uniformly to $h$ on every compact time interval. Hence, $(\Phi_n^\varepsilon)_{\varepsilon>0}$ also converges to $\Phi_n$ on any compact time interval.

%
\end{proof}

A similar property to Corollary \ref{cor:e_1} can be easily established. 

\begin{proposition}
\label{prop:e_2}
Let $e^\varepsilon_2$ be the stopping time defined by 
\begin{equation}
\begin{split}
e^\varepsilon_2 = \inf \{ t \ge e_1^\varepsilon \; ; \; X_t^\varepsilon =0 \} \ .
\end{split}
\end{equation}
We have that
\begin{equation}
\begin{split}
\lim_{\varepsilon \to 0} \mathbb P (e^\varepsilon_2 > t)  = e^{f(1) t}  \ ,
\end{split}
\end{equation}
i.e. $(e^\varepsilon_2)_{\varepsilon>0}$ converges in law to an exponential random variable with parameter $-f(1)$.
\end{proposition}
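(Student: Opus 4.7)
My plan is to reduce the claim to an asymptotic analysis of the survival function of the first post-$e_1^\varepsilon$ reset, via the strong Markov property. Since $X^\varepsilon$ is c\`adl\`ag and can only approach the open set $(y_*^\varepsilon,1]$ along a continuous segment of the deterministic flow (jumps always send the process to $0$), we have $X^\varepsilon_{e_1^\varepsilon}=y_*^\varepsilon$. Applying the strong Markov property \cite[Theorem 25.5]{Davies93} at the stopping time $e_1^\varepsilon$, the conditional law of $e_2^\varepsilon-e_1^\varepsilon$ given $\mathcal F_{e_1^\varepsilon}$ coincides with the law of the first jump time $\tilde\sigma^\varepsilon$ of a PDMP with generator $\mathcal L^\varepsilon$ started from $y_*^\varepsilon$. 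By \eqref{eq:mu_t_def} this jump time has survival function
\begin{equation*}
\mathbb P(\tilde\sigma^\varepsilon>t)=\exp\!\left(-\varepsilon^{-1}\!\int_0^t h(\tilde x_s^\varepsilon)\,ds\right),\qquad \varepsilon\dot{\tilde x}_s^\varepsilon=\omega^\varepsilon(\tilde x_s^\varepsilon),\ \tilde x_0^\varepsilon=y_*^\varepsilon.
\end{equation*}

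The core step is to prove $\varepsilon^{-1}\!\int_0^t h(\tilde x_s^\varepsilon)\,ds\to -tf(1)$ as $\varepsilon\to 0$ for each fixed $t>0$. I would use the defining ODE to rewrite $h(\tilde x_s^\varepsilon)=\bigl(\varepsilon\dot{\tilde x}_s^\varepsilon-\varepsilon f(\tilde x_s^\varepsilon)\bigr)/\tilde x_s^\varepsilon$, which after integration gives the exact identity
\begin{equation*}
\varepsilon^{-1}\!\int_0^t h(\tilde x_s^\varepsilon)\,ds=\log\!\left(\frac{\tilde x_t^\varepsilon}{y_*^\varepsilon}\right)-\int_0^t\frac{f(\tilde x_s^\varepsilon)}{\tilde x_s^\varepsilon}\,ds.
\end{equation*}
The first term vanishes since $y_*^\varepsilon\to 1$ and $\tilde x_t^\varepsilon\in[y_*^\varepsilon,x_*^\varepsilon]$ with $x_*^\varepsilon\to 1$ by \eqref{eq:xstarepsilon}; the second tends to $tf(1)$ by the monotone bracketing $y_*^\varepsilon\le\tilde x_s^\varepsilon\le x_*^\varepsilon$, the continuity of $f$, and dominated convergence. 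Assembling these gives $\mathbb P(\tilde\sigma^\varepsilon>t)\to e^{tf(1)}$, i.e.\ the announced exponential limit with parameter $-f(1)$ (as is natural, the statement concerns the sojourn time $e_2^\varepsilon-e_1^\varepsilon$ near the attractor $x_*^\varepsilon$).

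I do not anticipate a serious obstacle. The algebraic identity above bypasses any explicit boundary-layer analysis of $\tilde x^\varepsilon$ near $s=0$: the fact that $h(1)=0$ and the contribution of the fast relaxation $y_*^\varepsilon\to x_*^\varepsilon$ are both absorbed into the log term, which is manifestly $o(1)$. The only point needing minor care is justifying the strong Markov application at $e_1^\varepsilon$, which is standard for PDMPs under the hypotheses already verified in Section \ref{sec:imr}, and the measurability of the starting point $X^\varepsilon_{e_1^\varepsilon}=y_*^\varepsilon$, which follows from right-continuity.
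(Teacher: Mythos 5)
Your proposal is correct and follows essentially the same route as the paper: strong Markov at $e_1^\varepsilon$, the exact identity $\varepsilon^{-1}\int_0^t h(\tilde x_s^\varepsilon)\,ds=\log(\tilde x_t^\varepsilon/y_*^\varepsilon)-\int_0^t f(\tilde x_s^\varepsilon)/\tilde x_s^\varepsilon\,ds$ obtained from the ODE, and the observation that the flow started at $y_*^\varepsilon$ stays trapped near $1$ so that the log term vanishes and the integral tends to $tf(1)$. The only cosmetic difference is that the paper controls $\tilde x^\varepsilon$ via the monotonicity of $v^\varepsilon=1-\tilde x^\varepsilon$ rather than your bracketing between $y_*^\varepsilon$ and $x_*^\varepsilon$, and, like you, it reads the statement as concerning the sojourn time $e_2^\varepsilon-e_1^\varepsilon$.
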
 

\begin{proof}
Recall the definition of the stopping time
\[
e_2^\varepsilon = \inf\{ t \ge e_1^\varepsilon : X_t^\varepsilon = 0 \} \ .
\]
By the strong Markov property, conditionally to \( X_{e_1^\varepsilon}^\varepsilon = y_*^\varepsilon = 1-\varepsilon^\beta \) (with \(\beta\in(0,1/2)\)), the process evolves deterministically until the next reset. Let \(\tilde{x}_s^\varepsilon\) be the deterministic flow starting at \(y_*^\varepsilon\):
\[
\varepsilon \dot{\tilde{x}}_s^\varepsilon = \varepsilon f(\tilde{x}_s^\varepsilon) + \tilde{x}_s^\varepsilon h(\tilde{x}_s^\varepsilon), \qquad \tilde{x}_0^\varepsilon = 1-\varepsilon^\beta \  .
\]
Then
\[
\mathbb{P}(e_2^\varepsilon > t) = \exp\Bigl( -\int_0^t \frac{h(\tilde{x}_s^\varepsilon)}{\varepsilon} \, ds \Big) \ .
\]
We compute the integral by rewriting the ODE as
\[
\dot{\tilde{x}} = f(\tilde{x}) + \frac{\tilde{x} h(\tilde{x})}{\varepsilon} \ ,
\]
so that
\[
\frac{h(\tilde{x})}{\varepsilon} = \frac{1}{\tilde{x}} \bigl( \dot{\tilde{x}} - f(\tilde{x}) \bigr) = \frac{\dot{\tilde{x}}}{\tilde{x}} - \frac{f(\tilde{x})}{\tilde{x}}\  .
\]
Integrating from \(0\) to \(t\) gives
\begin{equation}
\label{eq:1ds}
\begin{split}
\int_0^t \frac{h(\tilde{x}_s^\varepsilon)}{\varepsilon} \, ds = \int_0^t \frac{\dot{\tilde{x}}_s^\varepsilon}{\tilde{x}_s^\varepsilon} \, ds - \int_0^t \frac{f(\tilde{x}_s^\varepsilon)}{\tilde{x}_s^\varepsilon} \, ds = \log\Bigl( \frac{\tilde{x}_t^\varepsilon}{\tilde{x}_0^\varepsilon} \Bigr) - \int_0^t \frac{f(\tilde{x}_s^\varepsilon)}{\tilde{x}_s^\varepsilon} \, ds \ . 
\end{split}
\end{equation}

\medskip
Now let \( v^\varepsilon (s) = 1 - \tilde{x}_s^\varepsilon \). Then \( v^\varepsilon (0) = \varepsilon^\beta \), and from the ODE we obtain
\[
\dot{v^\varepsilon} = -f(1-v^\varepsilon) - \frac{1}{\varepsilon}(1-v^\varepsilon)h(1-v^\varepsilon) \ .
\]
Since \( f(1)<0 \) and \( h(1)=0 \) with \( h'(1)<0 \), for small \( v>0 \) we have \( f(1-v)<0 \) and \( (1-v)h(1-v)>0 \). Hence, for sufficiently small \( \varepsilon \), the right--hand side is negative, so $v^\varepsilon$ is decreasing. Therefore, for all \( s \ge 0 \),
\[
0 \le v^\varepsilon (s) \le v^\varepsilon (0) = \varepsilon^\beta .
\]
Consequently,
\[
\sup_{s\in[0,t]} |1-\tilde{x}_s^\varepsilon| = \sup_{s\in[0,t]} v^\varepsilon (s) \le \varepsilon^\beta \xrightarrow[\varepsilon\to0]{} 0,
\]
so $\tilde{x}^\varepsilon$ converges uniformly on \([0,t]\) to the constant function $1$.

\medskip
From Eq. \eqref{eq:1ds}, since $\lim_{\varepsilon \to 0} \tilde{x}_t^\varepsilon =1$ and $ \lim_{\varepsilon \to 0} \tilde{x}_0^\varepsilon =\lim_{\varepsilon \to 0} ( 1-\varepsilon^\beta)=1$, we have
\[
\lim_{\varepsilon \to 0} \log\Bigl( \frac{\tilde{x}_t^\varepsilon}{1-\varepsilon^\beta} \Bigr)= \log 1 = 0 \ .
\]
Moreover, by uniform convergence, as $\varepsilon \to 0$,
\[
\frac{f(\tilde{x}_s^\varepsilon)}{\tilde{x}_s^\varepsilon} \longrightarrow f(1) \quad \text{uniformly on }[0,t],
\]
so
\[
\int_0^t \frac{f(\tilde{x}_s^\varepsilon)}{\tilde{x}_s^\varepsilon} \, ds \longrightarrow f(1)t \ .
\]
Thus,
\[
\lim_{\varepsilon\to0} \int_0^t \frac{h(\tilde{x}_s^\varepsilon)}{\varepsilon} \, ds = -f(1)t \ .
\]
Finally,
\[
\lim_{\varepsilon\to0} \mathbb{P}(e_2^\varepsilon > t) = \exp\bigl( -(-f(1)t) \bigr) = \exp\bigl( f(1)t \bigr) \ ,
\]
which means that \( e_2^\varepsilon \) converges in distribution to an exponential random variable with parameter \( -f(1) > 0 \).

\end{proof}

\section{Convergence to a Poisson process: proof of Theorem \ref{thm:mainone}}
\label{sec:proofofThm}

We denote by $(\mathcal F_t^\varepsilon)_{t \ge 0}$ the completed (by negligible sets) and augmented filtration of the natural filtration $(\sigma (X_s^\varepsilon \; ; \; 0 \le s \le t) )_{t \ge 0} $associated to $X^\varepsilon$, i.e. 
$$\mathcal F_t^\varepsilon = \sigma \Big( \cap_{ \delta>0} \sigma ( X_s^\varepsilon \; ; \; s \le t+\delta) \, \cup \,  \mathcal N\Big)$$ 
where $\mathcal N = \{ A \in \mathcal F \; ; \; \mathbb P (A) =0\}$. Let us then introduce a sequence of stopping times (with respect to $(\mathcal F_t^\varepsilon)_{t \ge 0}$ defined inductively by $e_0^\varepsilon=0$ and for any $k\ge 0$,
\begin{equation}
\begin{split}
&e_{2k+1}^\varepsilon = \inf \{ t \ge e_{2k}^\varepsilon  \; ; \; X_t^\varepsilon >y_*^\varepsilon \} = \inf \{ t \ge e_{2k}^\varepsilon  \; ; \; X_t^\varepsilon >y_*^\varepsilon \} \ ,  \\  
&e_{2k+2}^{\varepsilon} = \inf \{ t \ge e_{2k+1}^\varepsilon  \; ; \; X_t^\varepsilon = 0 \} \ . 
\end{split}
\end{equation} 
Observe that by Strong Markov property, for any $\varepsilon>0$, the sequence $(e_k^\varepsilon)_{k \ge 0}$ is composed of independent random variables such that for any $k\ge 0$, $(e_{2k+1}^\varepsilon, e_{2k+2}^\varepsilon) = (e_1^\varepsilon, e_2^\varepsilon)$ in law. See Fig. \ref{fig:trajectory}. By Corollary \ref{cor:e_1} and Proposition \ref{prop:e_2}, we get the following result:

\begin{proposition}
The process $({\bar X}^\varepsilon_t)_{t \ge 0}$ with state space $\{0,1\}$ and c\`adl\`ag  trajectories defined by 
\begin{equation}
\begin{split}
\forall t \ge 0, \quad {\bar X}^\varepsilon_t = \sum_{k\ge 0} {\mathbf 1}_{ t \in [e_{2k+1}^{\varepsilon}, e_{2k+2}^\varepsilon )} 
\end{split}
\end{equation}
converges in law to the jump Markov process $({\bar X}_t)_{t \ge 0}$ defined in Section \ref{sec:imr}.
\end{proposition}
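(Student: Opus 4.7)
The plan is to reduce the statement to the convergence of an iid alternating renewal sequence, relying on the independence structure from the strong Markov property and the convergences established in Corollary~\ref{cor:e_1} and Proposition~\ref{prop:e_2}.

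First, I would set $A_k^\varepsilon := e_{2k+1}^\varepsilon - e_{2k}^\varepsilon$ and $B_k^\varepsilon := e_{2k+2}^\varepsilon - e_{2k+1}^\varepsilon$ for each $k \ge 0$. Applying the strong Markov property for PDMPs (via the standard framework of \cite{Davies93}) at $e_{2k}^\varepsilon$, where $X^\varepsilon = 0$ almost surely, and then at $e_{2k+1}^\varepsilon$, where $X^\varepsilon = y_*^\varepsilon$ almost surely by the càdlàg property and the definition of $e_{2k+1}^\varepsilon$, I would conclude that $\{A_k^\varepsilon, B_k^\varepsilon : k \ge 0\}$ is a family of mutually independent random variables, with the $A_k^\varepsilon$ iid of the law of $e_1^\varepsilon$ and the $B_k^\varepsilon$ iid of the law of $e_2^\varepsilon - e_1^\varepsilon$. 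The within-pair independence $A_k^\varepsilon \perp B_k^\varepsilon$ is the crucial point and uses that $X^\varepsilon_{e_{2k+1}^\varepsilon} = y_*^\varepsilon$ is a deterministic value, so the conditional distribution of $B_k^\varepsilon$ given $\mathcal F_{e_{2k+1}^\varepsilon}^\varepsilon$ depends on no random information.

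Second, Corollary~\ref{cor:e_1} yields $A_k^\varepsilon \Rightarrow \mathrm{Exp}(f(0))$, while Proposition~\ref{prop:e_2}, whose proof actually computes the limiting law of $e_2^\varepsilon - e_1^\varepsilon$ by conditioning on the deterministic state $y_*^\varepsilon$ at $e_1^\varepsilon$, yields $B_k^\varepsilon \Rightarrow \mathrm{Exp}(-f(1))$. Since the prelimit variables are mutually independent for each fixed $\varepsilon$, the joint law of any finite sub-collection converges to the corresponding product of exponentials. Consequently, the full sequence $(A_k^\varepsilon, B_k^\varepsilon)_{k \ge 0}$ converges in law, in the product topology on $(0, \infty)^{\mathbb N_0 \times \{0, 1\}}$, to an iid family of mutually independent exponentials with rates $f(0)$ and $-f(1)$ respectively.

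Third, I would recover the path convergence through the continuous mapping theorem. The process $\bar X^\varepsilon$ is a deterministic measurable functional $\Phi$ of the sequence $(A_k^\varepsilon, B_k^\varepsilon)_{k \ge 0}$: the times $e_k^\varepsilon$ are the cumulative partial sums of these increments and $\bar X^\varepsilon_t = 1$ precisely on intervals of the form $[e_{2k+1}^\varepsilon, e_{2k+2}^\varepsilon)$. The map $\Phi$ into $D([0, \infty), \{0, 1\})$ equipped with the Skorokhod $J_1$ topology is continuous at every sequence whose entries are strictly positive and whose partial sums diverge to $+\infty$, a property that holds almost surely for the limiting iid exponential sequence. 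The main obstacle is really the clean verification of the within-pair independence via the strong Markov property—in particular the deterministic value of $X^\varepsilon$ at $e_{2k+1}^\varepsilon$—since this is what reduces the limit to an exponentially alternating Markov chain rather than a general alternating renewal process; once that is in place, the rest is soft and the continuous mapping theorem delivers the stated convergence.
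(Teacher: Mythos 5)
Your proposal is correct and follows essentially the same route as the paper, which simply invokes the strong Markov property to get independence of the increments (noting that $X^\varepsilon_{e^\varepsilon_{2k}}=0$ and $X^\varepsilon_{e^\varepsilon_{2k+1}}=y^\varepsilon_*$ are deterministic) and then cites Corollary~\ref{cor:e_1} and Proposition~\ref{prop:e_2} for the exponential limits of the two alternating holding times. The only difference is that you spell out the final continuous-mapping/Skorokhod step, which the paper leaves implicit.
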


Observe that this convergence is not sufficiently precise to see the convergence of the pre-spikes but only the convergence of the process of (quantum) jumps.

\bigskip

Denote by $\mathbb Q_k^{\varepsilon}$ the restriction of $\mathbb Q^\varepsilon$ to the time interval $[e_k^\varepsilon, e_{k+1}^\varepsilon)$, i.e. 
\begin{equation}
\begin{split}
\forall A \in \mathcal B ([0, \infty) \times [0,1]), \quad \mathbb Q^\varepsilon_k (A)= \mathbb Q^\varepsilon \Big(A\cap \{ [e_k^\varepsilon, e_{k+1}^\varepsilon)\times [0,1]\} \Big) \ .
\end{split}
\end{equation}
We have that 
\begin{equation}
\begin{split}
\mathbb Q^\varepsilon = \sum_{k \ge 0} \mathbb Q_k^\varepsilon \ .
\end{split}
\end{equation}
By strong Markov property, the sequence $(\mathbb Q_k)_{k \ge 0}$ is composed of independent random point processes. Moreover $(\mathbb Q_{2k})_{k\ge 0}$ (resp. $(\mathbb Q_{2k+1})_{k \ge 0}$) are identically distributed. Hence it is sufficient to study the limit of $(\mathbb Q_0^\varepsilon)_{\varepsilon >0}$ and $(\mathbb Q_1^\varepsilon)_{\varepsilon>0}$ as $\varepsilon$ goes to zero. The limit of $(\mathbb Q_1^\varepsilon)_{\varepsilon>0}$ is trivial in the sense that it is constant equal to zero. It remains only to study the limit of $(\mathbb Q_0^\varepsilon)_{\varepsilon >0}$. 

%
%

%


For any times $0 \le s \le t$ and any Borel subset $A$ of $(0,1)$ we denote by $N^{\varepsilon}_{(s,t]} (A)$ the numbers of pre-spikes belonging to $(s,t]\times A$, i.e.
\begin{equation}
\label{eq:NstA}
\begin{split}
N_{(s,t]}^{\varepsilon} (A) := \Big\vert \big\{ (\tau_i^{\varepsilon}, z_i^\varepsilon)  \in (s,t] \times A \; ; \; i \in \mathbb N  \big\} \Big\vert \ ,
\end{split}
\end{equation}
 and to simplify notations
 \begin{equation}
\begin{split}
N^{\varepsilon}_t (A):= N_{(0,t]}^\varepsilon (A) \ .
\end{split}
\end{equation}
Observe that 
\begin{equation}
\label{eq:NQ}
\begin{split}
N_{(s,t]}^{\varepsilon} (A)  = \big\vert \mathbb Q^\varepsilon ((s,t] \times A) \big\vert \ .
\end{split}
\end{equation}

\begin{proposition}
\label{prop:Poissonfinite}
Let $0<a\le b<1$ and $A=[a,b]$. For any $n\ge 0$ and any real numbers $0=s_0< s_1 < s_2 < s_n<t=s_{n+1}$, conditionally to the event $\{ e_1^\varepsilon \ge t\}$, the random vector
\begin{equation}
\begin{split}
\big( N^\varepsilon_{(s_i, s_{i+1}]} (A)  \big)_{0 \le i \le n}
\end{split}
\end{equation}
converges in law, as $\varepsilon$ goes to $0$, to the random vector 
\begin{equation}
\begin{split}
\big( {\bar N}_{i} \big)_{0 \le i \le n}
\end{split}
\end{equation}
composed of independent random variables such that for any $i \in \{0,\ldots, n\}$, $N_i$ is a Poisson random variable with parameter $(s_{i+1} -s_i) \lambda_* (A)$.
\end{proposition}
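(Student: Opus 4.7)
The plan is to prove Proposition \ref{prop:Poissonfinite} by induction on $n$, reducing the joint convergence to the one-interval result Corollary \ref{cor:Poisson0} via the strong Markov property of $X^\varepsilon$ applied at the deterministic times $s_1, \ldots, s_n$. The base case $n=0$ is precisely Corollary \ref{cor:Poisson0}, restated conditional on $\{e_1^\varepsilon \ge t\}$ after dividing by $\mathbb P(e_1^\varepsilon \ge t) \to e^{-tf(0)}$ (Corollary \ref{cor:e_1}).

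For the inductive step, I fix $n_0, \ldots, n_n \ge 0$ and apply the Markov property at the deterministic time $s_1$ to write
\begin{equation*}
\mathbb{P}\Big(e_1^\varepsilon \ge t,\, N^\varepsilon_{(s_i, s_{i+1}]}(A) = n_i\ \forall i\Big) = \mathbb{E}\Big[\mathbf{1}_{\{e_1^\varepsilon \ge s_1,\, N^\varepsilon_{(0, s_1]}(A) = n_0\}}\, H^\varepsilon(X^\varepsilon_{s_1})\Big],
\end{equation*}
where $H^\varepsilon(x) := \mathbb P_x\big(e_1^\varepsilon \ge t-s_1,\, N^\varepsilon_{(s_i - s_1,\, s_{i+1} - s_1]}(A) = n_i,\ i \ge 1\big)$ and $\mathbb P_x$ denotes the law of the PDMP started at $x$. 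The inductive hypothesis (with $n-1$ intervals) applied to the process started at $0$ gives $H^\varepsilon(0) \to \prod_{i=1}^n \Phi_{n_i}(s_{i+1} - s_i)$. The key lemma needed is the replacement $H^\varepsilon(X^\varepsilon_{s_1}) \to \prod_{i=1}^n \Phi_{n_i}(s_{i+1}-s_i)$ in probability: since $H^\varepsilon$ is uniformly bounded by $1$, dominated convergence then yields the desired product form for the inductive step. This replacement would be driven by the Markov property at the first pre-spike time $\tau_1^{\varepsilon, x}$ from $x = X^\varepsilon_{s_1}$: on the event that this pre-spike occurs before the deterministic flow crosses $y_*^\varepsilon$, the process resets to $0$ and then evolves as the process started from $0$, introducing only a time shift of order at most $T_*^\varepsilon - (x^\varepsilon)^{-1}(x) \le T_*^\varepsilon \sim -\varepsilon \log \varepsilon / h(0) \to 0$, which is rendered asymptotically harmless by the continuity of $r \mapsto \Phi_{n_i}(r)$ established in Corollary \ref{cor:Poisson0}.

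Carrying this induction through, I obtain
\begin{equation*}
\lim_{\varepsilon \to 0} \mathbb{P}\Big(e_1^\varepsilon \ge t,\, N^\varepsilon_{(s_i, s_{i+1}]}(A) = n_i\ \forall i\Big) = \prod_{i=0}^n \Phi_{n_i}(s_{i+1} - s_i) = e^{-tf(0)} \prod_{i=0}^n e^{-(s_{i+1}-s_i)\lambda_*(A)} \frac{\big((s_{i+1}-s_i)\lambda_*(A)\big)^{n_i}}{n_i!}.
\end{equation*}
Dividing by $\mathbb P(e_1^\varepsilon \ge t) \to e^{-tf(0)}$ precisely produces the joint product-Poisson distribution announced in the statement, with the correct parameters $(s_{i+1}-s_i)\lambda_*(A)$.

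The main obstacle will be the uniformity underlying the replacement of $X^\varepsilon_{s_1}$ by $0$ in $H^\varepsilon$. For $x$ very close to $y_*^\varepsilon$, the probability of crossing $y_*^\varepsilon$ before any pre-spike is not negligible, so $H^\varepsilon(x)$ does not share the same limit as $H^\varepsilon(0)$, and naive $L^\infty$ uniformity over $[0, y_*^\varepsilon]$ fails. I expect to circumvent this by splitting $[0, y_*^\varepsilon]$ into a bulk region (where uniform replacement is clean, exploiting the boundary-layer asymptotics of the deterministic flow $x^\varepsilon$ near $0$) and a thin boundary region $[y_*^\varepsilon - \eta_\varepsilon,\, y_*^\varepsilon]$ for a well-chosen $\eta_\varepsilon \to 0$, whose contribution to $\mathbb{E}[\mathbf 1_{\{\cdots\}} H^\varepsilon(X^\varepsilon_{s_1})]$ vanishes because the conditional law of $X^\varepsilon_{s_1}$ on $\{e_1^\varepsilon \ge s_1\}$ places asymptotically negligible mass near $y_*^\varepsilon$ (as can be seen from the renewal-theoretic representation $X^\varepsilon_{s_1} = x^\varepsilon_{S^\varepsilon_{s_1}}$ with age $S^\varepsilon_{s_1}$ concentrated on scale $\varepsilon$).
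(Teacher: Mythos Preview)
Your proposal and the paper's proof share the same skeleton---induction on $n$, strong Markov property, and the fact that the relevant time shift is bounded by $T_*^\varepsilon \to 0$---but differ in where the Markov property is applied. You condition at the \emph{deterministic} time $s_1$ and peel off the first interval, which lands you at an arbitrary state $X^\varepsilon_{s_1}\in[0,y_*^\varepsilon]$ and forces you to control $H^\varepsilon(x)$ over that whole range, hence your bulk/boundary split. The paper instead conditions directly at the \emph{regenerative stopping time} $\xi_{s_{n+1}}^\varepsilon:=\inf\{\tau_i^\varepsilon:\tau_i^\varepsilon\ge s_{n+1}\}$ and peels off the \emph{last} interval; at this time $X^\varepsilon=0$ exactly, so no starting-point analysis is needed. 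The only price is the time shift $\xi_{s_{n+1}}^\varepsilon-s_{n+1}$, which on $\{e_1^\varepsilon\ge s_{n+1}\}$ is deterministically $\le T_*^\varepsilon$ (this is the one-line Lemma~\ref{lem:rlt}), and the uniform convergence of $\Phi_n^\varepsilon$ from Corollary~\ref{cor:Poisson0} together with the continuity of $\Phi_n$ then closes the argument immediately.

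Your two-step plan (Markov at $s_1$, then at $\tau_1^{\varepsilon,x}$) in fact unwinds to applying Markov at $\xi_{s_1}^\varepsilon$ directly, and once you see this your boundary-layer worry evaporates: the bound $\xi_{s_1}^\varepsilon-s_1\le T_*^\varepsilon$ holds \emph{uniformly} in $X^\varepsilon_{s_1}$ on the no-jump event, so no $\eta_\varepsilon$-splitting or age-distribution analysis is required. What your route would still need to track is the single pre-spike at $\xi_{s_1}^\varepsilon$ itself (it falls in $(s_1,s_2]$ for small $\varepsilon$ and may or may not lie in $[a,b]$); its contribution is asymptotically negligible since a single renewal lands in $[T_a^\varepsilon,T_b^\varepsilon]$ with probability $\mu^\varepsilon_{T_a^\varepsilon}-\mu^\varepsilon_{T_b^\varepsilon}=O(\varepsilon)$, but this is extra bookkeeping the paper's formulation sidesteps.
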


\begin{proof}
We prove the property by induction on $n$. By Corollary \ref{cor:Poisson0} we know that the proposition holds for $n=0$. 

\medskip

For any $\varepsilon>0$ and any $s>0$ we introduce the residual lifetime{\footnote{This terminology is inherited from renewal theory.}} $\xi_s^\varepsilon$ at $s$ defined by 
\begin{equation}
\begin{split}
\xi_s^\varepsilon = \inf \{ \tau^\varepsilon_i \; ; \; i\ge 1, \ \tau^\varepsilon_i \ge s \} =  \inf \{ \tau^\varepsilon_i \; ; \; i\ge 1, \ \tau^\varepsilon_i > s \}
\end{split}
\end{equation}
which represents the first pre-spike time after time $s$. The second equality in the last display is proved by distinguishing the case where $s \in \{ \tau_i^\varepsilon \; ; \; i \ge 1\}$ and $s \notin \{ \tau_i^\varepsilon \; ; \; i \ge 1\}$. By lemma \ref{lem:stoppingtimexi}, we have that $\xi_s^\varepsilon$ is a $(\mathcal F_t^\varepsilon)_{t \ge 0}$ stopping time. Moreover we observe that
\begin{equation}
\begin{split}
\mathbb P (\xi_s^\varepsilon = s) = \mathbb P ( \exists i \ge 1, \tau_i^\varepsilon =s) = \sum_{i \ge 1} \mathbb P (\tau_i^\varepsilon =s) =0
\end{split}
\end{equation}
where the last equality follows from the fact that $\tau_i^\varepsilon$ has a density with respect to the Lebesgue measure. Hence in the sequel we can always assume that $\xi_s^\varepsilon \ne s$.

\medskip
Assume that we have proved the induction hypothesis at level $n$, i.e. for any sequence of times $0=s_0<s_1< \ldots<s_n<s_{n+1}=t$, and let us prove it for a given sequence $0=s_0 < s_1< \ldots< s_{n+1}<s_{n+2}=t$. Let fix $k_0,k_1, \ldots, k_{n+1} \in \mathbb N_0$. To simplify notations, we write $N_\cdot^\varepsilon$ instead of $N_\cdot^\varepsilon (A)$. By the previous observations and strong Markov property we have that
\begin{equation}
\begin{split}
&\mathbb P \Big (N_{(0,s_1]}^\varepsilon =k_0, N^\varepsilon_{(s_1,s_2]} = k_1, \ldots,  N^\varepsilon_{(s_{n+1},s_{n+2}]} = k_{n+1}, e_1^{\varepsilon} \ge t \Big)\\
&= \mathbb E \left( {\mathbf 1}_{ \{ N_{(0,s_1]}^\varepsilon =k_0, \ldots,  N^\varepsilon_{(s_{n},s_{n+1}]} = k_{n}, e_1^{\varepsilon} \ge s_{n+1} \}  } \; \mathbb E \Big[ {\mathbf 1}_{e_1^\varepsilon \ge t} {\mathbf 1}_{N^\varepsilon_{(s_{n+1},t]}= k_{n+1}} \vert {\mathcal F}^\varepsilon_{\xi_{s_{n+1}}^\varepsilon}  \Big] \right) \\
&= \mathbb E \left( {\mathbf 1}_{ \{ N_{(0,s_1]}^\varepsilon =k_0, \ldots,  N^\varepsilon_{(s_{n},s_{n+1}]} = k_{n}, e_1^{\varepsilon} \ge s_{n+1} \}  } \;  \Phi^\varepsilon (t-\xi_{s_{n+1}}^\varepsilon)\right)
\end{split}
\end{equation}
where 
\begin{equation}
\begin{split}
\Phi^\varepsilon (r) =\mathbb P ( e_1^\varepsilon \ge r, N^\varepsilon_{(0,r]} = k_{n+1}) =\mathbb P ( N^\varepsilon_{(0,r]}= k_{n+1} \vert e_1^\varepsilon \ge r)\, \mathbb P (e_1^\varepsilon \ge r) \ . 
\end{split}
\end{equation}
We claim that 
\begin{equation}
\label{eq:claim222}
\begin{split}
\lim_{\varepsilon \to 0} \mathbb E  \left( {\bf 1}_{e_1^\varepsilon \ge s_{n+1}} \,  \vert \Phi^\varepsilon (t-\xi_{s_{n+1}}^\varepsilon) - \Phi (t -s_{n+1}) \vert \right) =0 \ ,
\end{split}
\end{equation}
where 
\begin{equation}
\begin{split}
\Phi (r) = e^{- r f(0)} e^{- r \lambda_* (A)} \frac{[ r \lambda_* (A)]^{k_{n+1}}}{k_{n+1} !}\, e^{-r f(0)} \ .
\end{split}
\end{equation}
To prove Eq. \eqref{eq:claim222}, since $(\Phi^\varepsilon)_{\varepsilon>0}$ and $\Phi$ are uniformly bounded in absolute value by $1$, we can write, for any $\eta>0$, 
\begin{equation}
\begin{split}
&\mathbb E  \left( {\bf 1}_{e_1^\varepsilon \ge s_{n+1}} \,  \vert \Phi^\varepsilon (t-\xi_{s_{n+1}}^\varepsilon) - \Phi (t -s_{n+1}) \vert \right) \\
& \le 2 \mathbb P  \left( e_1^\varepsilon \ge s_{n+1} \, , \,  \vert \xi_{s_{n+1}}^\varepsilon - s_{n+1} \vert \ge \eta  \right) \, + \, \sup_{s\in [0,t]} | \Phi^\varepsilon (s) - \Phi (s)| \, +\,  \omega( \Phi, \eta) \ , 
\end{split}
\end{equation}
where $\omega (f, \eta) = \sup_{|s-r| \le \eta} |f(s) -f(r)|$, for any function $f:[0,t] \to \mathbb R$. But, observe that by Corollary \ref{cor:e_1} and Corollary \ref{cor:Poisson0} we have that 
\begin{equation}
\label{eq:uniformPhi}
\begin{split}
\lim_{\varepsilon \to 0 } \Phi^\varepsilon (r) =\Phi(r) 
\end{split}
\end{equation}
uniformly in $r \in [0,t]$, and $\Phi$ is continuous so that, by taking the limsup in $\varepsilon \to 0$ and then in $\eta \to 0$, we get that 
\begin{equation}
\begin{split}
&\limsup_{\varepsilon \to 0} \mathbb E  \left( {\bf 1}_{e^\varepsilon \ge s_{n+1}} \,  \vert \Phi^\varepsilon (t-\xi_{s_{n+1}}^\varepsilon) - \Phi (t -s_{n+1}) \vert \right)\\
& \le 2 \limsup_{\eta \to 0} \limsup_{\varepsilon \to 0}  \mathbb P  \left( e_1^\varepsilon \ge s_{n+1} \, , \,  \vert \xi_{s_{n+1}}^\varepsilon - s_{n+1} \vert \ge \eta  \right) \ .
\end{split}
\end{equation}
By Lemma  \ref{lem:rlt}, $\big({\bf 1}_{e^\varepsilon_1 \ge s_{n+1}} \, {\bf 1}_{ \xi_{s_{n+1}}^\varepsilon - s_{n+1} \, \ge \,  \eta} \big)_{\varepsilon>0}$ converges in probability to zero, so that Eq. \eqref{eq:claim222} follows. By Eq. \eqref{eq:claim222}, it follows then easily  
\begin{equation}
\begin{split}
&\lim_{\varepsilon \to 0} \mathbb P \Big (N_{(0,s_1]}^\varepsilon =k_0, N^\varepsilon_{(s_1,s_2]} = k_1, \ldots,  N^\varepsilon_{(s_{n+1},s_{n+2}]} = k_{n+1}, e_1^{\varepsilon} \ge t \Big) \\ 
&= \Phi (t -s_{n+1})  \lim_{\varepsilon \to 0} \mathbb P \Big (N_{(0,s_1]}^\varepsilon =k_0, N^\varepsilon_{(s_1,s_2]} = k_1, \ldots,  N^\varepsilon_{(s_{n},s_{n+1}]} = k_{n}, e_1^{\varepsilon} \ge s_{n+1} \Big) \\
&=\Phi (t -s_{n+1})  \lim_{\varepsilon \to 0} \mathbb P \Big (N_{(0,s_1]}^\varepsilon =k_0, N^\varepsilon_{(s_1,s_2]} = k_1, \ldots,  N^\varepsilon_{(s_{n},s_{n+1}]} = k_{n} \vert e_1^{\varepsilon} \ge s_{n+1} \Big)\\
& \hspace{2,5cm}   \times \lim_{\varepsilon \to 0} \mathbb P (e_1^\varepsilon \ge s_{n+1}) \ .
\end{split}
\end{equation}
Then, by Corollary \ref{cor:e_1} and the induction hypothesis at level $n$, it follows that
\begin{equation}
\begin{split}
&\lim_{\varepsilon \to 0} \mathbb P \Big (N_{(0,s_1]}^\varepsilon =k_1, N^\varepsilon_{(s_1,s_2]} = k_2, \ldots,  N^\varepsilon_{(s_{n+1},s_{n+2}]} = k_{n+2}, e_1^{\varepsilon} \ge t \Big) \\ 
&= e^{-t f(0)} \mathbb P ({\bar N}_0 = k_0) \ldots \mathbb P({\bar N}_{n}=k_{n}) \mathbb P ({\bar N}_{n+1} = k_{n+1}) \ ,
\end{split}
\end{equation}
where $(\bar N_0, \ldots, \bar N_{n+1})$ are $n+2$ independent random variables such that for any $i\in \{0,\ldots, n+1\}$, $\bar N_i$ is a Poisson variable with parameter $(s_{i+1} -s_i) \lambda_{*} (A)$. By Corollary \ref{cor:e_1} this proves the induction hypothesis at level $n+1$. 
\end{proof}

\begin{corollary}
\label{cor:indPoisson}
For any $A=[a,b] \subset (0,1]$ and any $t>0$, the sequence of simple point processes $(N^\varepsilon)_{\varepsilon>0}$ conditioned to the no-jump event $\{e_1^\varepsilon \ge t\}$ converges weakly in the Skorokhod space{\footnote{$D([0,t], \mathbb N_0)$ is the space of c\`adl\`ag functions from $[0,t]$ into $\mathbb N_0$, the latter being considered as a metric space equipped with the trivial distance.}} $D([0,t], \mathbb N_0)$ to a one dimensional Poisson process ${N} (A)$, with intensity $\lambda_* (A)$, restricted to the time interval $[0,t]$. 
\end{corollary}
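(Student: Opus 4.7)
The plan is to promote the finite-dimensional convergence of Proposition~\ref{prop:Poissonfinite} to functional convergence in $D([0,t], \mathbb N_0)$ by adding a tightness argument.

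First, Proposition~\ref{prop:Poissonfinite} provides, for every partition $0 = s_0 < s_1 < \cdots < s_{n+1} = t$, the joint convergence in law, conditionally on $\{e_1^\varepsilon \ge t\}$, of the increments $\bigl(N^\varepsilon_{(s_i,s_{i+1}]}(A)\bigr)_{0 \le i \le n}$ towards independent Poisson random variables with respective parameters $(s_{i+1} - s_i)\lambda_*(A)$. This identifies the finite-dimensional distributions of any subsequential weak limit as those of a homogeneous Poisson process on $[0,t]$ with intensity $\lambda_*(A)$.

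Second, I would establish tightness in $D([0,t], \mathbb N_0)$ equipped with the $J_1$-Skorokhod topology. Because the processes $s \mapsto N^\varepsilon_s(A)$ are non-decreasing, integer-valued, c\`adl\`ag and have unit jumps (the $\tau_i^\varepsilon$ admit densities, hence are pairwise distinct almost surely), tightness reduces to two ingredients: compact containment, provided by the tightness of the marginal $N^\varepsilon_t(A)$ already obtained as the $n=0$ case of Proposition~\ref{prop:Poissonfinite}, together with the absence of fixed discontinuities of the limit Poisson process. This places us in the framework of Kallenberg's theorem on weak convergence of simple point processes, according to which the above finite-dimensional convergence upgrades to vague convergence in distribution of the associated random counting measures on $[0,t]$, which in turn is equivalent to $J_1$-Skorokhod convergence of their distribution functions, once the limit has no atoms at fixed times.

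The main difficulty, should one wish to bypass the general point-process machinery, lies in verifying an Aldous-type modulus-of-continuity estimate of the form $\mathbb P\bigl(N^\varepsilon_{\sigma^\varepsilon + \delta}(A) - N^\varepsilon_{\sigma^\varepsilon}(A) \ge 1 \mid e_1^\varepsilon \ge t\bigr) \to 0$ as $\delta \to 0$, uniformly in $\varepsilon$ and in any family of $(\mathcal F^\varepsilon_s)$-stopping times $\sigma^\varepsilon \le t - \delta$. This can be achieved by applying the strong Markov property at the residual-lifetime stopping time $\xi_{\sigma^\varepsilon}^\varepsilon$ (exactly as in the proof of Proposition~\ref{prop:Poissonfinite}) and invoking a uniform-in-$\varepsilon$ upper bound on the pre-spike intensity extracted from the asymptotic analysis of Section~\ref{sec:nj-laplace-generating-convergence}. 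Once both the finite-dimensional convergence and the Aldous criterion are available, the conclusion follows from the standard weak-convergence theorem in the Skorokhod space.
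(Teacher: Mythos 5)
Your proposal is correct and follows the same skeleton as the paper: Proposition \ref{prop:Poissonfinite} supplies the convergence of the finite-dimensional distributions, and the only remaining issue is tightness in $D([0,t],\mathbb N_0)$. The difference lies in how tightness is sourced. The paper reduces it to showing that the number of jumps of $N^\varepsilon$ on $[0,t]$ is stochastically bounded uniformly in $\varepsilon$, which it extracts from Lemma \ref{lem:tight} (a first-moment bound on the expected number of pre-spikes in $[0,t]\times[\delta,1]$, proved by a renewal/martingale comparison). You instead take compact containment directly from the $n=0$ marginal convergence (Corollary \ref{cor:Poisson0}), and then invoke either Kallenberg's theorem for simple point processes or an Aldous criterion verified via the strong Markov property at the residual-lifetime stopping time. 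Both routes are legitimate; the Kallenberg route is arguably more economical since it does not need Lemma \ref{lem:tight} at this stage (the paper requires that lemma later anyway, for Prokhorov tightness of $\mathbb Q_0^\varepsilon$), and the Aldous route, if carried out, would make explicit a point that the paper's one-line tightness reduction glosses over: a bound on the number of unit jumps does not by itself control the Skorokhod modulus $w'$, and one must also rule out clustering of jump times, which is exactly what simpleness of the limit (via Kallenberg) or the Aldous estimate provides.
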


\begin{proof}
By Proposition \ref{prop:Poissonfinite}, we already have that the finite time dimensional distributions of $(N^\varepsilon)_{\varepsilon>0}$ conditioned to the no-jump event $\{e_1^\varepsilon \ge t\}$ converge to the corresponding finite distributions of a Poisson process with intensity $\lambda_* (A)$. Hence, it remains only to establish that the sequence $(N^{\varepsilon} (A))_{\varepsilon>0}$ is tight in $D([0,t], \mathbb N_0)$ to conclude. Hence we have to show that
\begin{equation}
\begin{split}
\limsup_{\varepsilon \to 0} \mathbb P (\text{number of jumps of $N^\varepsilon$ on $[0,t]$ $\ge n$  $\vert e_1^\varepsilon \ge t$} ) = 0  \ .
\end{split}
\end{equation}
 Recall Eq. \eqref{eq:NQ}. By Lemma \ref{lem:tight}, the result follows.

\end{proof}

\begin{remark}
\label{rem:closedopen}
In the previous corollary, we choose $A=[a,b]$ but a similar statement holds if $A=[a,b)$ or $A=(a,b]$ or $A=(a,b)$ because, by Corollary \ref{cor:Poisson0}, we know that for every $n \ge 0$, 
\begin{equation}
\begin{split}
\lim_{\varepsilon \to 0} \mathbb P (N^{\varepsilon}_t (\{a\}) = n ) = 0 \  . 
\end{split}
\end{equation}
\end{remark}

We recall now the following criterion of independency of Poisson processes .

\begin{proposition}{\cite[Ch. XII, Proposition 1.7]{RY13}}
\label{prop:RY}
Two Poisson processes $N^1$ and $N^2$ on $[0,t]$ are independent if and only if they do not jump simultaneously almost surely.
\end{proposition}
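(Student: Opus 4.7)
The plan is to prove the two directions separately, with the substance concentrated in the converse. The forward direction $(\Rightarrow)$ is essentially immediate: if $N^1$ and $N^2$ are independent Poisson processes with rates $\lambda_1, \lambda_2$, then for any fixed $s \in [0,t]$, independence together with the absence of fixed atoms gives $\mathbb{P}(\Delta N^2_s > 0 \mid \Delta N^1_s > 0) = \mathbb{P}(\Delta N^2_s > 0) = 0$. The jump times of $N^1$ form an a.s. countable random set, and a Fubini argument (exchanging the sum over these jump times with the expectation in $N^2$) shows that no simultaneous jump occurs almost surely.

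For the converse $(\Leftarrow)$, I would use Watanabe's martingale characterization of Poisson processes applied to the bivariate jump process. Set $M^i_t = N^i_t - \lambda_i t$, which are locally square-integrable martingales with respect to the joint filtration of $(N^1, N^2)$. Because $N^1$ and $N^2$ are pure jump processes with unit-size jumps, the quadratic covariation takes the form $[M^1, M^2]_t = \sum_{s \le t} \Delta N^1_s \, \Delta N^2_s$, which vanishes identically by the no-simultaneous-jumps hypothesis. Hence $M^1$ and $M^2$ are strongly orthogonal purely discontinuous martingales with deterministic predictable compensators $\lambda_1 t$ and $\lambda_2 t$.

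To deduce independence, I would show that the joint Laplace functional $\mathbb{E}[\exp(-\alpha N^1_t - \beta N^2_t)]$ factorizes. Applying It\^o's formula for pure jump semimartingales to $(x,y) \mapsto e^{-\alpha x - \beta y}$, the absence of simultaneous jumps eliminates any cross term, so that the exponential process
\[
Y_t = \exp\bigl( -\alpha N^1_t - \beta N^2_t + \lambda_1 t(1-e^{-\alpha}) + \lambda_2 t(1-e^{-\beta}) \bigr)
\]
is a bounded local (hence true) martingale starting from $1$ on $[0,t]$. Taking expectations at time $t$ delivers the desired product factorization, which (as $\alpha,\beta$ range over $[0,\infty)$) is equivalent to the independence of $N^1_t$ and $N^2_t$; the statement extends to joint finite-dimensional distributions by an analogous argument on each subinterval and the independence-of-increments property of each $N^i$.

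The main technical hurdle is justifying why strong orthogonality of the purely discontinuous parts forces genuine independence rather than just uncorrelatedness: the deterministic nature of the compensators is crucial here, since without it orthogonal jump martingales need not correspond to independent processes. In the context of this paper, quoting Revuz--Yor's Chapter XII directly is the most economical route, which is what the authors have done.
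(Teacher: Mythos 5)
The paper offers no proof of this statement: it is quoted verbatim from Revuz--Yor, and your sketch is essentially the proof given there (the forward direction by conditioning on the a.s.\ countable jump set of $N^1$, the converse via the Watanabe-type exponential martingale $Y_t$ whose expectation factorizes the joint Laplace functional, extended to finite-dimensional distributions by piecewise-constant exponents). Your argument is correct as outlined; the only point worth making explicit is that both processes must be Poisson with respect to a \emph{common} filtration for $M^1,M^2$ to be martingales in the same filtration, which is part of the hypothesis in the cited proposition.
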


It follows that if $A=[a,b]$ and $C=[c,d]$ are two disjoint sub-intervals of $(0,1)$, then the two Poisson processes $N(A)$ and $N(C)$ defined through the Corollary \ref{cor:indPoisson} are independent. Indeed, assume for example that $a<b<c<d$ and define the interval $B=[a,d]$ which contains $A$ and $C$. By Corollary \ref{cor:indPoisson}, on $[0,t]$, $t>0$, $N(B)$ is a Poisson process with $\mathbb P$ a.s. a finite number of jumps (all of size one). We have that the jump times set of $N(B)$ contains the jump times set of $N(A)$ and $N(C)$. If $N(A)$ and $N(C)$ have a common jump time then $N(B)$ has a jump of size bigger than $2$, which is excluded.

\bigskip

Our aim is now to show that, for any $\delta \in (0,1)$ and $t>0$ fixed,  $(\mathbb Q^\varepsilon_0)_{\varepsilon>0}$, restricted to any time-space interval $[0,t] \times [\delta, 1]$,  converges weakly to a two-dimensional Poisson point process with intensity 
\begin{equation}
\begin{split}
f(0)^2 \,  {\bf 1}_{s \in [0,t]} ds  \, {\bf 1}_{x \in [\delta,1]} \, \cfrac{dx}{x^2} \ .
\end{split}
\end{equation}
For simplicity of notation, we denote the restriction of $\mathbb Q^\varepsilon_0$ to $[0,t] \times [\delta, 1]$ by $\mathbb Q^\varepsilon_0$. Then the sequence $(\mathbb Q^\varepsilon_0)_{\varepsilon>0}$ is a family of random measures on $[0,t] \times [\delta, 1]$.

\bigskip 

Observe that for any $ A \in {\mathcal B} ([\delta, 1])$ and $0\le r\le s \le t$, we have 
\begin{equation}
\label{eq:additivity}
\begin{split}
\mathbb E \left[ \mathbb Q_0^\varepsilon ((r,s] \times A) \right] & = \mathbb E \left[ \sum_{i=1}^\infty {\bf 1}_{(\tau_i^\varepsilon, z_i^\varepsilon) \in (r,s] \times A} \, {\bf 1}_{\tau_i^\varepsilon \le e_1^\varepsilon} \right] \\
& =\mathbb E \left[ \sum_{i=1}^\infty {\bf 1}_{(\tau_i^\varepsilon, z_i^\varepsilon) \in (r,s] \times A} \, {\bf 1}_{\tau_i^\varepsilon \le e_1^\varepsilon} \, \Big \vert \, e_1^ \varepsilon \ge t \right] \ \mathbb P [e_1^\varepsilon \ge t ] \\
& =\mathbb E \left[ \sum_{i=1}^\infty {\bf 1}_{(\tau_i^\varepsilon, z_i^\varepsilon) \in (r,s] \times A} \, \Big \vert \, e_1^ \varepsilon \ge t \right] \ \mathbb P [e_1^\varepsilon \ge t ] \\
&=\mathbb E \left[ N^\varepsilon_{(r,s]} (A)  \, \Big \vert \, e_1^ \varepsilon \ge t \right] \ \mathbb P [e_1^\varepsilon \ge t ] \\
&= \mathbb E \left[ N^\varepsilon_{(r,s]} (A) \right] \ . 
\end{split}
\end{equation}

\begin{lemma}
\label{lem:tight}
For any $\delta>0$, there exists $K<\infty$ such that 
\begin{equation}
\label{eq:UI}
\begin{split}
\limsup_{\varepsilon \to 0} \mathbb E \left[ \mathbb Q_0^\varepsilon ([0,t] \times [\delta,1]) \right] \le K  \ .
\end{split}
\end{equation}
\end{lemma}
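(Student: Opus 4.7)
The plan is to reduce the expected mass of $\mathbb Q_0^\varepsilon$ to an expected count of ``long'' interarrival times via classical renewal theory, and then to estimate each factor separately using the boundary-layer analysis of the ODE \eqref{eq:deterministicflow} carried out in Section~\ref{sec:nj-laplace-generating-convergence}.

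First, by Eq.~\eqref{eq:additivity} it is enough to bound $\mathbb{E}[N_t^\varepsilon([\delta,1])]$ uniformly in small $\varepsilon$. Since $z_i^\varepsilon = x^\varepsilon_{\sigma_i^\varepsilon}$ and the flow $x^\varepsilon$ is strictly increasing, one has $\{z_i^\varepsilon \in [\delta,1]\}=\{\sigma_i^\varepsilon \ge T^\varepsilon_\delta\}$. Using the independence of the i.i.d.\ interarrival times $(\sigma_i^\varepsilon)_{i\ge 1}$ and conditioning on $\tau_{i-1}^\varepsilon$,
\begin{equation*}
\mathbb{E}[N_t^\varepsilon([\delta,1])] \;=\; \sum_{i\ge 1}\mathbb{P}\bigl(\tau_i^\varepsilon\le t,\ \sigma_i^\varepsilon\ge T^\varepsilon_\delta\bigr) \;\le\; \mu^\varepsilon_{T^\varepsilon_\delta}\bigl(1 + m^\varepsilon(t)\bigr),
\end{equation*}
where $m^\varepsilon(t) := \sum_{i\ge 1}\mathbb{P}(\tau_i^\varepsilon\le t)$ is the renewal function of $(\sigma_i^\varepsilon)$. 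The problem thus reduces to showing $\mu^\varepsilon_{T^\varepsilon_\delta}=O(\varepsilon)$ and $m^\varepsilon(t)=O(\varepsilon^{-1})$.

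For $\mu^\varepsilon_{T^\varepsilon_\delta}$, which is the main technical point, I would perform the change of variable $y = x^\varepsilon_s$ to rewrite
\begin{equation*}
\varepsilon^{-1}\int_0^{T^\varepsilon_\delta}h(x^\varepsilon_s)\,ds = \int_0^\delta \frac{h(y)}{\omega^\varepsilon(y)}\,dy,
\end{equation*}
and then carry out a matched asymptotic expansion around $y=\sqrt{\varepsilon}$, using $\omega^\varepsilon(y)\sim \varepsilon f(0)+yh(0)$ in the inner (boundary-layer) region and $\omega^\varepsilon(y)\sim yh(y)$ in the outer region. Adding the two contributions yields a leading $-\log\varepsilon$ divergence, whence $\mu^\varepsilon_{T^\varepsilon_\delta}\sim \varepsilon f(0)/(\delta h(0))$. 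This is precisely the type of singular-integral asymptotics carried out in Section~\ref{sec:nj-laplace-generating-convergence} for $C^\varepsilon,D^\varepsilon,E^\varepsilon$, and is where I expect the main technical difficulty to lie.

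For the renewal function, I would combine the Laplace identity $\int_0^\infty e^{-\sigma s}\,dm^\varepsilon(s) = \phi^\varepsilon(\sigma)/(1-\phi^\varepsilon(\sigma))$ with the elementary inequality $m^\varepsilon(t)\le e^{\sigma t}\int_0^\infty e^{-\sigma s}\,dm^\varepsilon(s)$ and the identity $1-\phi^\varepsilon(\sigma)=\sigma\int_0^\infty e^{-\sigma s}\mu^\varepsilon_s\,ds$, where $\phi^\varepsilon(\sigma) := \mathbb{E}[e^{-\sigma\sigma_1^\varepsilon}]$. Choosing $\sigma=1/t$ reduces matters to a lower bound $\int_0^\infty e^{-s/t}\mu^\varepsilon_s\,ds\gtrsim\varepsilon$, which follows from the trivial boundary-layer estimate $\mu^\varepsilon_s\ge c>0$ on $s\in[0,\varepsilon]$ (valid because $x^\varepsilon_s = O(\varepsilon)$, and hence $h(x^\varepsilon_s) = O(1)$, there). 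Multiplying the two bounds gives $\mathbb{E}[N_t^\varepsilon([\delta,1])] = O(1)$ uniformly in small $\varepsilon$, which concludes the proof.
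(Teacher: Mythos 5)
Your argument is correct, and it reaches the bound by a genuinely different and more elementary route than the paper. Both proofs ultimately rest on the same key asymptotic input, namely that the probability for a single interarrival time to exceed $T^\varepsilon_\delta$ equals $\exp(-V^\varepsilon(\delta))\lesssim\varepsilon$; your claimed asymptotics $\mu^\varepsilon_{T^\varepsilon_\delta}\sim\varepsilon f(0)/(\delta h(0))$ is precisely the content of Lemma \ref{lem:expV}, so you can invoke it rather than redo the matched expansion. Where you diverge is in everything else: the paper works with the process conditioned on the no-jump event, introduces an auxiliary renewal process with interarrival law $\mathcal L(\sigma_1^\varepsilon\mid\sigma_1^\varepsilon\le T_*^\varepsilon)$, a stochastic-domination coupling, a binomial counting martingale with optional stopping, the renewal theorem, and two further lemmas (Lemma \ref{lem:Iepsilon} and Lemma \ref{lem:Vyepsilonstar}) to lower-bound the mean interarrival time by $C\varepsilon$. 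You instead bound the unconditional count (which dominates $\mathbb Q_0^\varepsilon$ pointwise), use the first-moment identity $\mathbb E[N_t^\varepsilon([\delta,1])]=\sum_{i\ge1}\mathbb P(\tau_i^\varepsilon\le t,\,\sigma_i^\varepsilon\ge T^\varepsilon_\delta)$ together with independence, and control the renewal function through its Laplace transform; your lower bound $\int_0^\infty e^{-s/t}\mu^\varepsilon_s\,ds\gtrsim\varepsilon$ plays the role of Lemma \ref{lem:Iepsilon} but is essentially trivial (note that $\mu^\varepsilon_s\ge e^{-\|h\|_\infty}$ for $s\le\varepsilon$ already follows from boundedness of $h$; the observation $x^\varepsilon_s=O(\varepsilon)$ there, while true by Gronwall, is not needed). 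Your version is shorter, avoids coupling and martingale machinery, and still yields the conditional bound used in Corollary \ref{cor:indPoisson}, since $\mathbb E[N_t^\varepsilon\mid e_1^\varepsilon\ge t]\le\mathbb E[N_t^\varepsilon]/\mathbb P(e_1^\varepsilon\ge t)$ and $\mathbb P(e_1^\varepsilon\ge t)\to e^{-tf(0)}>0$ by Corollary \ref{cor:e_1}; the price is that the argument is specific to the first-moment bound, whereas the paper's conditioned renewal construction exhibits the exact conditional law of the interarrival times.
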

 
\begin{proof}
By Eq. \eqref{eq:additivity}, we have to prove 
\begin{equation}
\begin{split}
\limsup_{\varepsilon \to 0} \mathbb E \left[ N^\varepsilon_{(0,t]} ([\delta,1])  \, \vert \, e_1^\varepsilon \ge t \right] \le K \ .
\end{split}
\end{equation}
We consider a new renewal process $(\tilde\tau_n^\varepsilon)_{n \ge 0}$ defined by $\tilde\tau_0^\varepsilon=0$ and for any $n \ge 1$, $\tilde\tau_n^\varepsilon = \tilde\sigma_1^\varepsilon + \ldots+\tilde\sigma_n^\varepsilon$ where the sequence of positive random variables $(\tilde\sigma_n^\varepsilon)_{n \ge 1}$ are i.i.d. with law defined by 
\begin{equation}
\begin{split}
\mathbb P (\tilde\sigma^\varepsilon_1 \le s) = \mathbb P( \sigma_1^\varepsilon \le s  \, \vert \, \sigma_1^\varepsilon \le T_*^\varepsilon) = \cfrac{1}{\int_0^{T_*^\varepsilon}  \dot{\mu}_r^\varepsilon \, dr } \int_0^s {\bf 1}_{r \le T^\varepsilon_*}  \, \dot{\mu}_r^\varepsilon \, dr \ .
\end{split}
\end{equation}
For any $n \ge 1$, $\tilde\sigma_n^\varepsilon$ is stochastically smaller than $\sigma_n^\varepsilon$: $\mathbb P (\tilde\sigma_n^\varepsilon \le s) \le \mathbb P (\sigma_n^\varepsilon \le s)$ for any time $s \ge 0$. We define ${\tilde N}_{\cdot}^\varepsilon (\cdot)$ like in Eq. \eqref{eq:NstA} but,  replacing the renewal process $(\tau_n^\varepsilon)_{n\ge 0}$ by the renewal process $(\tilde\tau_n^\varepsilon)_{n \ge 0}$ and defining the PDMP $\tilde X^\varepsilon$ like $X^\varepsilon$ but with this new renewal sequence. We can couple the two processes so that for each $n$, $\tilde\sigma_n^\varepsilon \leq \sigma_n^\varepsilon $ $\mathbb P$ a.s. and consequently,
\begin{equation}
\label{eq:tau-tildetau}
\begin{split}
\tilde{\tau}^\varepsilon_n \leq \tau^\varepsilon_n \quad \text{for all } n \ .
\end{split}
\end{equation}
Since $x_s^\varepsilon$ is increasing in $s$, we have
\[
x_{\tilde\sigma_n}^\varepsilon \leq x_{\sigma_n}^\varepsilon.
\]
Conditional on the event $\{e_1^\varepsilon \geq t\}$, all inter-arrival times for the renewal process $(\tau_n^\varepsilon)_{n \ge 1}$, are smaller than $T_*^\varepsilon$, so the conditional law of $\sigma_n^\varepsilon$ given $\{e_1^\varepsilon \geq t\}$ is exactly the law of $\tilde\sigma_n^\varepsilon$. Therefore, conditionaly to this event, in law,
\[
N^\varepsilon_{(0,t]} ( [\delta,1]) = \tilde{N}^\varepsilon_{(0,t]}( [\delta,1]).
\]
It is therefore sufficient  to prove 
\begin{equation}
\begin{split}
\limsup_{\varepsilon \to 0} \mathbb E  \left[ {\tilde N}^\varepsilon_{(0,t]} ([\delta,1]) \right] \le K \ .
\end{split}
\end{equation}
Since $\delta$ is fixed, to simplify notation, we denote $\tilde N^\varepsilon_{(0,t]} ([\delta,1])$ by $\tilde N_t^\varepsilon$.  

\bigskip 

We denote by $(\beta^\varepsilon_n)_{n \ge 1}$ the increasing subsequence of $(\tilde \tau^\varepsilon_n)_{n \ge 1}$ such that the corresponding pre-spike belong to $[\delta, 1]$, i.e. ${\tilde X}^\varepsilon_{\tilde\tau_\cdot^\varepsilon \, - } \in [\delta, 1]$. We denote by $(i^\varepsilon_n)_{n\ge 0}$ the (random) sequence of integers such that $ i_0^\varepsilon=0$ and 
\begin{equation}
\begin{split}
\forall n \ge 1, \quad \beta_n^\varepsilon= \tilde \tau_{i_n^\varepsilon}^\varepsilon \ . 
\end{split}
\end{equation}
Consider the discrete-time process $H^\varepsilon$  given by 
\begin{equation}
\begin{split}
\forall n \ge 0, \quad H^\varepsilon_n =  \sum_{k \ge 1} {\mathbf 1}_{i^\varepsilon_k \le n} \ . 
\end{split}
\end{equation}
See Fig. \ref{fig:trajectory-tightness}. Observe now that the random variable $H_n^\varepsilon$ has a Binomial law of parameter $(n,p^\varepsilon)$ where $p^\varepsilon \in [0,1]$ is the probability for ${\tilde X}^\varepsilon$ to reach $[\delta, 1)$ before be reseted to $0$. The discrete-time process $(M_n^\varepsilon)_{n \ge 0} := (H_n^\varepsilon - n p^\varepsilon)_{n \ge 0}$ is a centered discrete-time martingale with respect to the filtration $(\mathcal G^\varepsilon_n)_{n \ge 0}:= \big( \, \sigma ( {\tilde X}^\varepsilon_s \; ; \; s \le {\tilde \tau}_n^\varepsilon) \,  \big)_{n \ge 0}$. Consider the counting process $n^\varepsilon$ defined by 
\begin{equation}
\begin{split}
\forall s \ge 0, \quad n_s^\varepsilon = \sup\{ n \ge 0 \; ; \; \beta_n^\varepsilon \le s\}= \sum_{n \ge 1} {\bf 1}_{{\beta}_n^\varepsilon \le s}  \ . 
\end{split}
\end{equation}
Observe that it is a stopping time so that, by the optional stopping theorem, we have
\begin{equation}
\begin{split}
\forall s \ge 0, \quad \mathbb E (H^\varepsilon_{n_s^\varepsilon}) = p^\varepsilon \mathbb E (n_s^\varepsilon) \ .
\end{split}
\end{equation}
We have that
\begin{equation}
\begin{split}
\mathbb E (\tilde N_t^\varepsilon) \le \mathbb E (H^\varepsilon_{n_t^\varepsilon}) = p^\varepsilon \mathbb E (n_t^\varepsilon) \ . 
\end{split}
\end{equation}
For any $r\ge 0$ we denote
\begin{equation}
\begin{split}
\zeta_r^\varepsilon=\inf\{ \beta_i^\varepsilon \; ; \; \beta_i^\varepsilon \ge r\} \in [r, r+T_*^\varepsilon]  \ ,
\end{split}
\end{equation}
and we have thus
 \begin{equation}
\label{eq:subinterval}
\begin{split}
(\zeta_r^\varepsilon , \zeta_r^\varepsilon + s] \subset (r, s+r+T_*^\varepsilon] \ .
\end{split}
\end{equation}
For any $r,s \ge 0$, we define
\begin{equation}
\begin{split}
n_{(r, r+s]}^\varepsilon = n_{r+s}^\varepsilon - n_r^\varepsilon \ . 
\end{split}
\end{equation}
Since, for any $r\ge 0$,  $\zeta_r^\varepsilon$ is a regenerative time (i.e. ${\tilde X}^\varepsilon_{\zeta_{r}^\varepsilon} =0$), we have that, in law, 
\begin{equation}
\begin{split}
n_s^\varepsilon = n_{\big(\zeta_r^\varepsilon , \zeta_r^\varepsilon + s \big]}^\varepsilon \ . 
\end{split}
\end{equation}
Consequently, by Eq. \eqref{eq:subinterval}, we get that 
\begin{equation}
\begin{split}
m^\varepsilon_s : = \mathbb E (n_s^\varepsilon) & = \mathbb E \left(n_{\big(\zeta_r^\varepsilon , \zeta_r^\varepsilon + s \big]}^\varepsilon \right) \le \mathbb E \left(n_{(r, s+r+T_*^\varepsilon]}^\varepsilon \right )   \le m^\varepsilon_{s+r+T_*^\varepsilon} - m^\varepsilon_r \ .
\end{split}
\end{equation}
By the renewal theorem, sending $r$ to infinity, we obtain then that
\begin{equation}
\begin{split}
m^\varepsilon_s \le \frac{s+T_*^\varepsilon}{\mathbb E (\beta_1^\varepsilon)} \ .
\end{split}
\end{equation}

\bigskip

Therefore
\begin{equation}
\begin{split}
\mathbb E (\tilde N_t^\varepsilon) \le p^\varepsilon \,  \frac{t+T_*^\varepsilon}{\mathbb E (\beta_1^\varepsilon)} \ .
\end{split}
\end{equation}

\bigskip 

By Eq. \eqref{eq:tau-tildetau} we have that 
\begin{equation}
\begin{split}
p^\varepsilon&= \mathbb P ({\tilde \tau}_1^\varepsilon \ge T^\varepsilon_\delta) \le \mathbb P ({\tau}_1^\varepsilon \ge T^\varepsilon_\delta)=\mathbb P ({\sigma}_1^\varepsilon \ge T^\varepsilon_\delta)\\
& = \exp \left( - \varepsilon^{-1} \int_0^{T_\delta^\varepsilon} h(x_s^\varepsilon) \, ds \right)\\
&= \exp( -V^\varepsilon (\delta) ) \ .
\end{split}
\end{equation}
By Lemma \ref{lem:expV}, we have that $p^\varepsilon \lesssim \varepsilon$.

\bigskip

Observe now that 
\begin{equation}
\begin{split}
\mathbb E (\beta_1^\varepsilon)= \cfrac{1}{-\int_0^{T_*^\varepsilon}  \dot{\mu}_r^\varepsilon \, dr } \int_0^{T^\varepsilon_*}  \, r \,  (-\dot{\mu}_r^\varepsilon)  \, dr \ .
\end{split}
\end{equation}
The numerator can be rewritten as
\begin{equation}
\begin{split}
I^\varepsilon := \int_0^{T^\varepsilon_*}  \, r \,  (-\dot{\mu}_r^\varepsilon)  \, dr  =  \varepsilon  \int_0^{y_*^\varepsilon} \frac{h(x) }{\varepsilon f(x) + xh(x)} \, U^\varepsilon (x) \, e^{-V^\varepsilon (x)} \, dx  \ .
\end{split}
\end{equation}
By Lemma \ref{lem:Iepsilon} we have that it is bounded by below by $C \varepsilon$ where $C>0$ is a constant. Moreover, we have that the denominator can be written as
\begin{equation}
\begin{split}
-\int_0^{T_*^\varepsilon}  \dot{\mu}_r^\varepsilon \, dr &= \int_0^{y_*^\varepsilon} e^{-V^\varepsilon (x)} \, \cfrac{h(x)}{\varepsilon f(x) +x h(x)} \,  dx \\
&= \int_0^{y_*^\varepsilon} \cfrac{d}{dx} \Big[ - e^{-V^\varepsilon (x)} \Big] \, dx \\
&=1- e^{-V^\varepsilon (y_*^\varepsilon)} =1 + o(1) \ .
\end{split}
\end{equation}
The last equality is proved in Lemma \ref{lem:Vyepsilonstar}. Hence we get the desired bound.

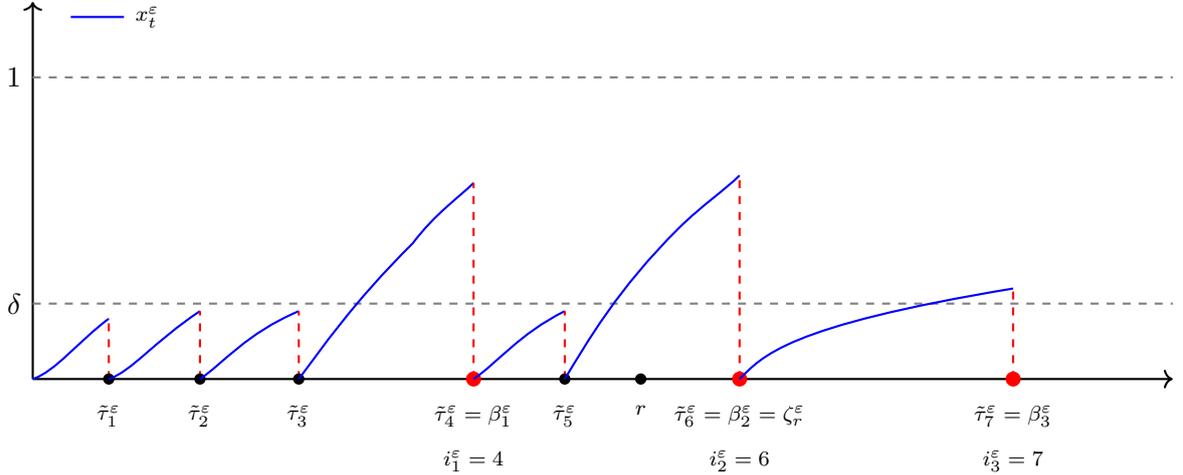
\begin{figure}[htbp]
    \centering
    \label{fig:trajectory-tightness}

\begin{tikzpicture}[
    line width=0.8pt,
    wave/.style={thick, blue},
    jump/.style={red, dashed, thick},
    annotation/.style={font=\small},
    timepoint/.style={circle, fill=black, inner sep=1.5pt},
    betapoint/.style={circle, fill=red, inner sep=2pt}
]

  \draw[blue, thick] (0.5,4.8) -- (1.2,4.8);
  \node[right, font=\tiny] at (1.2,4.8) {$x_t^\varepsilon$};

\draw[->] (0,0) -- (15,0);
\draw[->] (0,0) -- (0,5);

\node[left, annotation] at (0,1) {$\delta$};
\draw[black!50, dashed] (0,1) -- (15,1);

\node[left, annotation] at (0,4) {$1$};
\draw[black!50, dashed] (0,4) -- (15,4);

\draw[wave] (0,0) .. controls (0.3,0.1) and (0.6,0.5) .. (1,0.8);
\draw[jump] (1,0) -- (1,0.8);
\node[timepoint] at (1,0) {};
\node[below, annotation,font=\tiny] at (1,-0.2) {$\tilde{\tau}_1^\varepsilon$};

\draw[wave] (1,0) .. controls (1.3,0.1) and (1.6,0.5) .. (2.2,0.9);
\draw[jump] (2.2,0) -- (2.2,0.9);
\node[timepoint] at (2.2,0) {};
\node[below, annotation, font=\tiny] at (2.2,-0.2) {$\tilde{\tau}_2^\varepsilon$};

\draw[wave] (2.2,0) .. controls (2.5,0.2) and (2.8,0.6) .. (3.5,0.9);
\draw[jump] (3.5,0) -- (3.5,0.9);
\node[timepoint] at (3.5,0) {};
\node[below, annotation,font=\tiny] at (3.5,-0.2) {$\tilde{\tau}_3^\varepsilon$};

\draw[wave] (3.5,0) .. controls (3.8,0.4) and (4.2,1.0) .. (5,1.8)
            .. controls (5.3,2.2) and (5.6,2.4) .. (5.8,2.6);
\draw[jump] (5.8,0) -- (5.8,2.6);
\node[betapoint] at (5.8,0) {};
\node[below, annotation,font=\tiny] at (5.8,-0.2) {$\tilde{\tau}_4^\varepsilon = \beta_1^\varepsilon$};
\node[below, annotation,font=\tiny] at (5.8,-0.8) {$i_1^\varepsilon=4$};

\draw[wave] (5.8,0) .. controls (6.1,0.2) and (6.4,0.6) .. (7,0.9);
\draw[jump] (7,0) -- (7,0.9);
\node[timepoint] at (7,0) {};
\node[below, annotation,font=\tiny] at (7,-0.2) {$\tilde{\tau}_5^\varepsilon$};

\node[timepoint] at (8,0) {};
\node[below, annotation,font=\tiny] at (8,-0.2) {$r$};

\draw[wave] (7,0) .. controls (7.3,0.5) and (7.7,1.2) .. (8.5,2.0)
            .. controls (8.8,2.3) and (9.1,2.5) .. (9.3,2.7);
\draw[jump] (9.3,0) -- (9.3,2.7);
\node[betapoint] at (9.3,0) {};
\node[below, annotation,font=\tiny] at (9.3,-0.2) {$\tilde{\tau}_6^\varepsilon = \beta_2^\varepsilon=\zeta_r^\varepsilon$};
\node[below, annotation,font=\tiny] at (9.3,-0.8) {$i_2^\varepsilon=6$};

\draw[wave] (9.3,0) .. controls (9.6,0.3) and (9.9,0.7) .. (12.9,1.2);
\draw[jump] (12.9,0) -- (12.9,1.2);
\node[betapoint] at (12.9,0) {};
\node[below, annotation,font=\tiny] at (12.9,-0.2) {$\tilde{\tau}_7^\varepsilon =\beta_3^\varepsilon$};
\node[below, annotation,font=\tiny] at (12.9,-0.8) {$i_3^\varepsilon=7$};
\end{tikzpicture}

\caption{A formal realisation of the PDMP $({\tilde X}_t^\varepsilon)_{t \ge 0}$. }
\end{figure}

\end{proof}

By Markov inequality and Prokorhov theorem, this implies that the family of random measures $(\mathbb Q_0^\varepsilon)_{\varepsilon>0}$ is pre-compact in the space of measures on the compact set $[0,t] \times [\delta,1]$. Let $\mathbb Q^*$ be a limit point of the sequence $(\mathbb Q_0^\varepsilon)_{\varepsilon>0}$, which is a finite (a priori random) measure on $[0,t] \times [\delta,1]$. We need to show that $\mathbb Q^*$ is a two-dimensional Poisson point process with intensity 
\begin{equation}
\begin{split}
f(0)^2 \,  {\bf 1}_{s \in [0,t]} ds  \, {\bf 1}_{x \in [\delta,1]} \, \cfrac{dx}{x^2} \ .
\end{split}
\end{equation}
Let $\mathbb Q$ be the law of such point process. To lighten notations we assume (otherwise we may extract a subsequence) that  $(\mathbb Q_0^\varepsilon)_{\varepsilon>0}$ converges (in distribution) to $\mathbb Q^*$. We need to show that $\mathbb Q^*=\mathbb Q$. By Lemma \ref{lem:tight}, we already know that $\mathbb Q^*$ is concentrated on the set of finite measures. By Corollary \ref{cor:indPoisson} and the comments following Proposition \ref{prop:RY}, $\mathbb Q^*$ is in fact the law of a simple point process.

\medskip

For any $t>0$ and $\delta \in (0,1]$, let $\mathcal U$ be the collection of sets which can be written as finite union of sets in the form $(s_i, t_i] \times A_i$, $[s_i,t_i]\times A_i$ or  $[s_i,t_i)\times A_i$, where $A_i \in \{ [a_i, b_i], (a_i, b_i], [a_i, b_i), (a_i, b_i) \}$, with $0 \le s_i <  t_i \le t$ and $\delta \le a_i \le b_i \le 1$.  By additivity property of Eq. \eqref{eq:additivity}, we have then for any $\Delta \in \mathcal U$,
\begin{equation}
\begin{split}
\lim_{\varepsilon \to 0} \, \mathbb Q^\varepsilon (\Delta) = \mathbb Q^* (\Delta) = \mathbb Q(\Delta) \ .
\end{split}
\end{equation}
This is trivial if $\Delta$ is a finite \textit{disjoint} union of rectangles but we observe that if it is not the case, $\Delta$ can be rewritten as a disjoint union of rectangles (as explained above, boundaries do not have any importance). 

\medskip

Let us then recall 

\begin{theorem}\cite[R\'{e}nyi theorem]{DV02} 
\label{thm:renyi}
Let $\mu$ be a non-atomic measure on $[0,t]\times[\delta,1]$, finite on bounded sets. Suppose that the simple point process $\mathcal N$ is such that for any set $\Delta$ which is a finite union of rectangles
\[
P(\mathcal N(\Delta) = 0) = \exp\{-\mu(\Delta)\}.
\]
Then $\mathcal N$ is a Poisson process with intensity measure $\mu$.
\end{theorem}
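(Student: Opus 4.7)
The plan is to identify the law of $\mathcal{N}$ by showing that its finite-dimensional marginals on disjoint Borel sets are those of the Poisson point process of intensity $\mu$, which characterises a simple point process on our compact state space.

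First, I would extend the identity $P(\mathcal{N}(\Delta)=0)=e^{-\mu(\Delta)}$ from the algebra $\mathcal{U}$ of finite unions of rectangles to every Borel subset $B\subset[0,t]\times[\delta,1]$. This is a standard monotone-class argument: $\mathcal{U}$ generates the Borel $\sigma$-algebra, both maps $B\mapsto e^{-\mu(B)}$ and $B\mapsto P(\mathcal{N}(B)=0)$ are continuous along monotone set sequences (using that $\mathcal N$ has finitely many atoms a.s., which follows from the hypothesis applied to the full rectangle together with Lemma \ref{lem:tight} applied to $\mathcal N$), and outer regularity of $\mu$ allows approximation by $\mathcal{U}$-sets.

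Next, for disjoint Borel sets $A_{1},\ldots,A_{k}$ and integers $n_{1},\ldots,n_{k}\ge 0$, I would use non-atomicity of $\mu$ (and compactness) to choose, for each $m\ge 1$, a partition $A_{j}=\bigsqcup_{i=1}^{m}R_{i}^{(j)}$ with $\max_{i,j}\mu(R_{i}^{(j)})\to 0$ and $\max_{i,j}\operatorname{diam}(R_{i}^{(j)})\to 0$ as $m\to\infty$. By simplicity of $\mathcal{N}$ together with finiteness of its total mass, for $\mathbb P$-a.e.\ $\omega$ there is $m_{0}(\omega)$ such that each $R_{i}^{(j)}$ contains at most one atom of $\mathcal N$ for $m\ge m_0(\omega)$; hence on this event $\{\mathcal{N}(A_{j})=n_{j}\}$ equals $E_{j}^{(m)}:=\{\text{exactly }n_{j}\text{ sub-sets }R_{i}^{(j)}\text{ are non-empty}\}$. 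Inclusion–exclusion on the $2^m$ void-intersection events, combined with Step~1 and the disjointness of the $A_{j}$'s, yields the deterministic formula
\begin{equation*}
P\Bigl(\bigcap_{j=1}^{k}E_{j}^{(m)}\Bigr)=\prod_{j=1}^{k}\ \sum_{|I|=n_{j}}\exp\!\Bigl(-\!\sum_{i\notin I}\mu(R_{i}^{(j)})\Bigr)\prod_{i\in I}\!\bigl(1-e^{-\mu(R_{i}^{(j)})}\bigr),
\end{equation*}
which converges as $m\to\infty$, by the classical Poisson limit for triangular arrays of independent Bernoullis with success probabilities $1-e^{-\mu(R_i^{(j)})}=\mu(R_i^{(j)})+o(\mu(R_i^{(j)}))$ summing to $\mu(A_j)$, to $\prod_{j=1}^{k}\mu(A_{j})^{n_{j}}e^{-\mu(A_{j})}/n_{j}!$. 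This identifies $(\mathcal{N}(A_{1}),\ldots,\mathcal{N}(A_{k}))$ as independent Poisson variables, whence $\mathcal{N}$ is the Poisson point process of intensity $\mu$.

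The principal obstacle lies in this last limiting argument: one must transfer the $\omega$-wise equality $\{\mathcal{N}(A_{j})=n_{j}\}=E_{j}^{(m)}$, valid only for $m\ge m_{0}(\omega)$, into a genuine convergence of probabilities. This is achieved by dominated convergence once one bounds $P\bigl(E_{j}^{(m)}\triangle\{\mathcal{N}(A_{j})=n_{j}\}\bigr)$ by the probability that some $R_{i}^{(j)}$ contains at least two atoms, itself bounded by $P(m_{0}>m)\to 0$ as $m\to\infty$ by almost sure separation of atoms. The construction of partitions whose diameters \emph{and} $\mu$-measures tend uniformly to zero (which simultaneously drives the combinatorial limit on the right-hand side and the $\omega$-wise reduction on the left) is the small technical point that must be handled with care.
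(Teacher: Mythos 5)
The paper does not prove this statement at all: it is quoted verbatim as R\'enyi's theorem with a citation to Daley--Vere-Jones \cite{DV02}, so there is no in-paper argument to compare against. What you have written is, in substance, the standard textbook proof of that theorem (avoidance functions of a simple point process determine its law, implemented via a dissecting system and a Poisson limit for triangular arrays), and it is correct in outline. The two genuinely load-bearing points are both present in your sketch: (i) the extension of $P(\mathcal N(B)=0)=e^{-\mu(B)}$ from the algebra of finite unions of rectangles to all Borel $B$ (your monotone-class step works because $\mathcal N(B_n)\to\mathcal N(B)$ along monotone sequences, using a.s.\ finiteness of $\mathcal N$ on the compact window, and $\mu(B_n)\to\mu(B)$); and (ii) the observation that, for disjoint Borel sets $R_1,\dots,R_m$, additivity of $\mu$ gives $P\bigl(\bigcap_{i\in I}\{\mathcal N(R_i)=0\}\bigr)=\prod_{i\in I}e^{-\mu(R_i)}$, which by inclusion--exclusion makes the occupancy indicators ${\bf 1}_{\mathcal N(R_i)\ge 1}$ genuinely independent Bernoulli variables --- this is what licenses the product formula and the Poisson limit, and it deserves to be stated explicitly rather than folded into ``inclusion--exclusion \dots yields''. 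Two small corrections: the appeal to Lemma \ref{lem:tight} is out of place, since that lemma concerns the specific processes $\mathbb Q_0^\varepsilon$ of the paper and not an abstract $\mathcal N$; a.s.\ finiteness of $\mathcal N$ on $[0,t]\times[\delta,1]$ is simply part of the definition of a (boundedly finite) point process and needs no proof. Also, for general Borel $A_j$ you should say that the partitions $R_i^{(j)}$ are obtained by intersecting $A_j$ with a nested dissecting system of small rectangles of the whole window, so that small diameter is automatic and $\max_i\mu(R_i^{(j)})\to 0$ follows from non-atomicity of the finite measure $\mu$ by a compactness argument; this is exactly why Step 1 (Borel sets, not just unions of rectangles) is needed. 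With those points made explicit, your argument is a complete and self-contained proof of the cited theorem.
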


Since $\lambda_*$ is non-atomic, this concludes the proof of  Theorem \ref{thm:mainone}.

\section{Proof of Proposition \ref{prop:nj-laplace-generating-convergence}}
\label{sec:nj-laplace-generating-convergence}

Let $0 <a \le b \le 1$. Recall Eq. \eqref{eq:Zc} which states that for any $\varepsilon>0$,  $\sigma \ge 0$ and $s \in[0,1]$,
\begin{equation}
\begin{split}
Z^\varepsilon(s,\sigma: a,b)=\sum_{n=0}^\infty s^n \int_{0}^\infty e^{-\sigma t} {P}^\varepsilon_{nj}(n:t,a,b)\, dt 
\end{split}
\end{equation}
is given by Eq. \eqref{eq:functionZ}.

\bigskip

Recall the definition of $x_*^\varepsilon, y_*^\varepsilon$ and $T_c^\varepsilon, T_*^\varepsilon$ given at the begining of Section \ref{sec:convfiniterectangles}.

\bigskip

Let $U^\varepsilon:[0,x^\varepsilon_*) \mapsto [0,\infty)$ be the strictly increasing function defined by
\begin{equation}
\label{eq:Uepsilon}
\begin{split}
U^\varepsilon (x) = \int_{0}^x \cfrac{dy}{\varepsilon f(y) + y h(y)}, \quad x\in [0,x^\varepsilon_*) \ .
\end{split}
\end{equation} 
Since
\begin{equation}
\begin{split}
\frac{d}{dt} U^\varepsilon (x^\varepsilon_t) = \big[U^\varepsilon\big]^{\prime} (x^\varepsilon_t) \, {\dot x}^\varepsilon_t = \varepsilon^{-1} \ ,
\end{split}
\end{equation}
we get that for any time $t\ge 0$, 
\begin{equation}
\label{eq:eq-sol-x}
\begin{split}
\varepsilon U^\varepsilon (x_t) = t \ . 
\end{split}
\end{equation}

Recall that $\mu^\varepsilon: t \in [0,\infty) \mapsto \mu_t^\varepsilon \in (0, 1]$ is the function defined by Eq. \eqref{eq:mu_t_def}, i.e. for any time $t\ge 0$,
\begin{equation}
\label{eq:mu}
\begin{split}
\mu_t^\varepsilon &= e^{- \varepsilon^{-1} \int_0^t  h(x^\varepsilon_s)ds} \ .
\end{split}
\end{equation}

Let $V^\varepsilon:[0, x^\varepsilon_*) \mapsto [0,\infty)$ be the strictly increasing function defined by 
\begin{equation}
\label{eq:Vepsilon}
\begin{split}
\forall x \in [0,x_\varepsilon^*), \quad V^\varepsilon (x)= \int_0^x \cfrac{h(y)}{\varepsilon f(y) + y h(y)} dy \ .
\end{split}
\end{equation}
We have that 
\begin{equation}
\begin{split}
\frac{d}{dt} V^\varepsilon (x^\varepsilon_t) = \big[V^\varepsilon\big]^\prime  (x^\varepsilon_t) \,  {\dot x}^\varepsilon_t = \varepsilon^{-1} \cfrac{h(x^\varepsilon_t)}{\varepsilon f(x^\varepsilon_t) + x^\varepsilon_t h(x^\varepsilon_t)} \, [ \varepsilon f(x^\varepsilon_t) + x^\varepsilon_t h(x^\varepsilon_t)] = \varepsilon^{-1} h(x^\varepsilon_t) \ .
\end{split}
\end{equation}
It follows that
\begin{equation}
\begin{split}
\varepsilon^{-1} \int_0^t h(x^\varepsilon_s) ds = V^\varepsilon (x^\varepsilon_t) 
\end{split}
\end{equation}
and consequently 
\begin{equation}
\label{eq:muV}
\begin{split}
\mu^\varepsilon_t = e^{- V^\varepsilon (x^\varepsilon_t)} \ .
\end{split}
\end{equation}

\bigskip

\subsection{Expansion of $E^\varepsilon (\sigma)$}

We recall that
\begin{equation}
\begin{split}
E^\varepsilon(\sigma)= \int_0^{T_*^\varepsilon} e^{-\sigma t} \mu^\varepsilon_t dt \ . 
\end{split}
\end{equation}

\begin{lemma}
We have that as $\varepsilon$ goes to $0$, 
\begin{equation}
\begin{split}
E^\varepsilon(\sigma) \sim \frac{1}{h(0)} \, \varepsilon \ .
\end{split}
\end{equation}
\end{lemma}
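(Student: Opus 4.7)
The plan is to change variables from $t$ to $x = x_t^\varepsilon$, then rescale by the natural boundary-layer scale $x = \varepsilon u$, and finally invoke dominated convergence in the rescaled variable. Since $t = \varepsilon U^\varepsilon(x)$ gives $dt = \varepsilon\, dx / (\varepsilon f(x) + xh(x))$, and $\mu_t^\varepsilon = e^{-V^\varepsilon(x_t^\varepsilon)}$, the first substitution yields
\begin{equation*}
E^\varepsilon(\sigma) = \int_0^{y_*^\varepsilon} \frac{\varepsilon\, e^{-\sigma \varepsilon U^\varepsilon(x)}\, e^{-V^\varepsilon(x)}}{\varepsilon f(x) + x h(x)}\, dx = \varepsilon \int_0^{y_*^\varepsilon/\varepsilon} \frac{e^{-\sigma \varepsilon U^\varepsilon(\varepsilon u)}\, e^{-V^\varepsilon(\varepsilon u)}}{f(\varepsilon u) + u\, h(\varepsilon u)}\, du,
\end{equation*}
the last equality being the substitution $x = \varepsilon u$. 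The claim reduces to showing that the $u$-integral converges to $1/h(0)$.

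Next I would identify the pointwise limit of the integrand. For fixed $u > 0$, continuity of $f,h$ yields $f(\varepsilon u) + u h(\varepsilon u) \to f(0) + u h(0)$; the asymptotic $\varepsilon U^\varepsilon(\varepsilon u) = O(\varepsilon|\log\varepsilon|)$ coming from the boundary-layer estimate for $T^\varepsilon_c$ gives $e^{-\sigma \varepsilon U^\varepsilon(\varepsilon u)} \to 1$; and the substitution $y = \varepsilon v$ in \eqref{eq:Vepsilon} rewrites $V^\varepsilon(\varepsilon u) = \int_0^u h(\varepsilon v)/(f(\varepsilon v) + v h(\varepsilon v))\, dv$, which converges to $\int_0^u h(0)/(f(0)+vh(0))\, dv = \log\bigl(1 + u h(0)/f(0)\bigr)$. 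Consequently the integrand converges pointwise to $f(0)/(f(0) + u h(0))^2$, and the improper integral $\int_0^\infty f(0)(f(0) + u h(0))^{-2}\, du$ evaluates explicitly to $1/h(0)$, matching the claimed limit.

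The main work is justifying the exchange of limit and integral, which I would split into a bulk and a tail piece. Cut at $u = c/\varepsilon$ for a fixed small $c > 0$. On $[0, c/\varepsilon]$, for $c$ small enough, continuity of $f,h$ near $0$ gives uniform two-sided comparison between $h(y)/(\varepsilon f(y) + yh(y))$ and $h(0)/(\varepsilon f(0) + yh(0))$; integrating from $0$ to $\varepsilon u$ produces a lower bound of the form $V^\varepsilon(\varepsilon u) \ge \tfrac{1}{2}\log\bigl(1 + u h(0)/f(0)\bigr)$, which combined with a uniform lower bound on the denominator yields the integrable majorant
\begin{equation*}
\frac{C}{(f(0) + u h(0))\, (1 + u h(0)/f(0))^{1/2}} \in L^1((0,\infty))\ .
\end{equation*}
Dominated convergence applied to the truncated integrand then gives the expected limit $1/h(0)$. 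For the tail $u \in [c/\varepsilon, y_*^\varepsilon/\varepsilon]$, it is easier to work in the original time variable: for $t \ge T^\varepsilon_c$ one has $\mu_t^\varepsilon \le e^{-V^\varepsilon(c)}$, and a short computation (using the same comparison near $0$) shows $V^\varepsilon(c) = \log(ch(0)/(\varepsilon f(0))) + O(1)$, hence $e^{-V^\varepsilon(c)} = O(\varepsilon)$; combined with $T_*^\varepsilon = O(\varepsilon|\log\varepsilon|)$ from \eqref{eq:asymptoticsT_*}, the tail contributes $O(\varepsilon^2 |\log\varepsilon|) = o(\varepsilon)$.

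The principal obstacle is obtaining the uniform-in-$\varepsilon$ lower bound on $V^\varepsilon(\varepsilon u)$ at large $u$; this is exactly the boundary-layer feature, since one needs the singular integrand in \eqref{eq:Vepsilon} to produce a logarithm with the correct leading constant, uniformly on the growing range $[0, c/\varepsilon]$. Once that comparison is in place, the rest is routine: dominated convergence, the explicit evaluation of the limit integral, and a crude tail bound close the argument.
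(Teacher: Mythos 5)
Your argument is correct, and in the bulk it follows the same route as the paper: change variables to $x$, rescale by the boundary-layer width $x=\varepsilon u$, identify the pointwise limit $f(0)/(f(0)+uh(0))^2$ of the integrand, and close with dominated convergence against an explicitly integrable majorant (your crude two-sided comparison of $h(y)/(\varepsilon f(y)+yh(y))$ with $h(0)/(\varepsilon f(0)+yh(0))$ on $[0,c]$, giving $V^\varepsilon(\varepsilon u)\ge \tfrac12\log(1+uh(0)/f(0))$ and hence a $u^{-3/2}$ tail, is a perfectly valid substitute for the paper's quantitative error bound $\lvert e^{-V^\varepsilon(\varepsilon u)}-(1+\tfrac{h(0)}{f(0)}u)^{-1}\rvert\lesssim \varepsilon\int_0^u y(1+y)^{-2}dy$). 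Where you genuinely diverge is the tail $x\in[c,y_*^\varepsilon]$. The paper splits this further into $[c,b]$ and $[b,y_*^\varepsilon]$ and, for the piece near $1$, must invoke the full strength of Lemma \ref{lem:expV} with the $\mathfrak h(x)$-dependent remainders and the tuned exponent $\alpha=1/6-\beta/3$, together with the bound $U^\varepsilon(y_*^\varepsilon)\lesssim -\log\varepsilon$, to control the singular factor $1/(\varepsilon f(x)+xh(x))$. You instead stay in the time variable and use only the monotonicity of $\mu^\varepsilon_t$: the tail is at most $T_*^\varepsilon\, e^{-V^\varepsilon(c)}=O(\varepsilon\lvert\log\varepsilon\rvert)\cdot O(\varepsilon)=o(\varepsilon)$. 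This is cleaner and entirely sufficient here; note only that even if your "comparison near $0$" is taken in the crude multiplicative form it yields merely $e^{-V^\varepsilon(c)}=O(\varepsilon^{1-\eta})$, which still gives $o(\varepsilon)$ for the tail — the sharp $O(\varepsilon)$ requires the additive difference-of-integrands estimate, as in the appendix. The trade-off is that the paper's heavier machinery near $x=1$ is needed anyway elsewhere (for $C^\varepsilon$ and Eq. \eqref{eq:hard0}), so it costs the authors nothing to reuse it, whereas your shortcut is the more economical self-contained argument for this lemma alone.
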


\begin{proof}

Using Eq. \eqref{eq:eq-sol-x} and Eq. \eqref{eq:muV} we have that
\begin{equation}
\begin{split}
E^\varepsilon(\sigma)&=\varepsilon \int_0^{T_*^\varepsilon} \cfrac{e^{-\sigma \varepsilon U^\varepsilon (x^\varepsilon_t) - V^\varepsilon (x^\varepsilon_t) }}{\varepsilon f(x_t) +x^\varepsilon_t h(x^\varepsilon_t)} \ {\dot x^\varepsilon}_t \ dt = \varepsilon \int_0^{y^\varepsilon_*} \cfrac{e^{-\sigma \varepsilon U^\varepsilon (x) - V^\varepsilon (x) }}{\varepsilon f(x) +x h(x)} \, dx \ .
\end{split}
\end{equation}

\bigskip

For a fixed $\delta \in (0,x^*)$, chosen independently of $\varepsilon$ but sufficiently small, let us first evaluate the main contribution $E_\delta^\varepsilon(\sigma)$ to $E^\varepsilon (\sigma)$ given by 
\begin{equation}
\begin{split}
E_\delta^\varepsilon(\sigma) &:=  \varepsilon \int_0^{\delta} \cfrac{e^{-\sigma \varepsilon U^\varepsilon (x) - V^\varepsilon (x) }}{\varepsilon f(x) +x h(x)} \, dx \\
&= \varepsilon \int_0^{\delta} \cfrac{e^{- V^\varepsilon (x) }}{\varepsilon f(x) +x h(x)} \, dx \ + \ \varepsilon \int_0^{\delta} \cfrac{ e^{- V^\varepsilon (x) }}{\varepsilon f(x) +x h(x)} \ (e^{-\sigma \varepsilon U^\varepsilon (x)} -1 ) \, dx\ .
\end{split}
\end{equation}
We have that for any $x \in [0,\delta]$,
\begin{equation}
\begin{split}
\vert e^{-\sigma \varepsilon U^\varepsilon (x)} -1 \vert \le \sigma \varepsilon U^\varepsilon (x) \le \sigma \varepsilon U^\varepsilon (\delta) \ . 
\end{split}
\end{equation}
By Eq. \eqref{eq:0delta} and recalling the definition of $c(b)$ given in Eq. \eqref{eq:cdelta} we get that
\begin{equation}
\begin{split}
\left\vert U^\varepsilon (\delta) - \int_0^\delta \frac{dy}{\varepsilon f(0) + y h(0)}\right\vert \le c^{-2}(\delta) \ \left[ \varepsilon \log (1+ \delta/\varepsilon) + \delta \right] \ .
\end{split}
\end{equation}
Hence, there exists a constant $C(\delta)>0$ depending only on $\delta$, such that
\begin{equation}
\begin{split}
U^\varepsilon (\delta) \le C(\delta) \log (1/\varepsilon) \ .
\end{split}
\end{equation}
Thus, the integral 
\begin{equation}
\begin{split}
\varepsilon \int_0^{\delta} \cfrac{ e^{- V^\varepsilon (x) }}{\varepsilon f(x) +x h(x)} \ (e^{-\sigma \varepsilon U^\varepsilon (x)} -1 ) \, dx
\end{split}
\end{equation}
is negligible with respect to 
\begin{equation}
\begin{split}
 \varepsilon \int_0^{\delta} \cfrac{e^{- V^\varepsilon (x) }}{\varepsilon f(x) +x h(x)} \, dx \ .
\end{split}
\end{equation}
Therefore, we are reduced to find the asymptotic of the last integral. It can be rewritten as
\begin{equation}
\begin{split}
\varepsilon \int_0^{\delta/\varepsilon} \cfrac{e^{- V^\varepsilon (\varepsilon x) }}{f(\varepsilon x) +x h(\varepsilon x)} \, dx 
\end{split}
\end{equation}
We claim that 
\begin{equation}
\label{eq:maincontribE}
\begin{split}
\varepsilon \int_0^{\delta/\varepsilon} \cfrac{e^{- V^\varepsilon (\varepsilon x) }}{f(\varepsilon x) +x h(\varepsilon x)} \, dx = \frac{\varepsilon}{h(0)}  \, + \, o(\varepsilon) \ .
\end{split}
\end{equation}

\bigskip 

To prove this claim, we observe first that 
\begin{equation}
\begin{split}
V^\varepsilon (\varepsilon x) &= \int_0^x \cfrac{h(\varepsilon y)}{f(\varepsilon y) + y h(\varepsilon y)} \, dy \ .
\end{split}
\end{equation}

We have that for any $x \in [0,\delta/\varepsilon]$, 
\begin{equation}
\begin{split}
& \int_{0}^x \left\vert \cfrac{h(\varepsilon y)}{f(\varepsilon y) + y h(\varepsilon y)} - \frac{h(0)}{f(0)+y h(0)} \right\vert \, dy \\
&= \int_0^x \left\vert \cfrac{ [ h(\varepsilon y) -h(0)] f(0) - h(0) [ f(\varepsilon y) -f(0)] }{[ f(\varepsilon y) + y h(\varepsilon y)] \, [ f(0)+y h(0)] } \right\vert \, dy \\
& \lesssim \varepsilon  \int_0^x  \cfrac{y}{[ f(\varepsilon y) + y h(\varepsilon y)] \, [ f(0)+y h(0)] } \, dy \\
& \lesssim  \kappa (\delta) \varepsilon  \int_0^x  \cfrac{y}{(1+y)^2} \, dy \ ,
\end{split}
\end{equation}
where 
\begin{equation}
\begin{split}
1/ \kappa(\delta) := \inf\left\{ \inf_{z \in [0,\delta]} f(z) , \inf_{z \in [0,\delta]} h(z) \right\} > 0 \ .
\end{split}
\end{equation}
In the display above, the constant $\kappa (\delta)>0$ as soon as $\delta$ is taken sufficiently small. Observe now that
\begin{equation}
\begin{split}
\int_0^x \frac{h(0)}{f(0) + y h(0) } \, dy =  \log \Big( 1+\tfrac{h(0)}{f(0)} x \Big) \ .
\end{split}
\end{equation}
It follows that
\begin{equation}
\begin{split}
\left\vert e^{-V^\varepsilon (\varepsilon x)} - \frac{1}{1+\tfrac{h(0)}{f(0)} x} \right\vert & = \Big\vert e^{-\int_{0}^x \frac{h(\varepsilon y)}{f(\varepsilon y) + y h(\varepsilon y)} \, dy }  - e^{- \int_0^x \frac{h(0)}{f(0)+y h(0)} \, dy}  \Big\vert\\
& \le  \int_{0}^x \left\vert \frac{h(\varepsilon y)}{f(\varepsilon y) + y h(\varepsilon y)} - \frac{h(0)}{f(0)+y h(0)} \right\vert \, dy \\
& \lesssim  \kappa (\delta) \varepsilon  \int_0^x  \cfrac{y}{(1+y)^2} \, dy \ .
\end{split}
\end{equation}
We conclude that 
\begin{equation}
\begin{split}
\varepsilon \int_0^{\delta/\varepsilon} \cfrac{e^{- V^\varepsilon (\varepsilon x) }}{f(\varepsilon x) +x h(\varepsilon x)} \, dx  & = \varepsilon \int_0^{\delta/\varepsilon} \frac{1}{1+ \tfrac{h(0)}{f(0)} x } \, \frac{1}{f(\varepsilon x) + x h( \varepsilon x)}\, dx \, + \, \theta_{\varepsilon}^\delta 
\end{split}
\end{equation}
where 
\begin{equation}
\begin{split}
\vert \theta_\varepsilon^\delta \vert & \le \kappa (\delta) \varepsilon^2 \int_0^{\delta/\varepsilon} \frac{ \int_0^x y (1+y)^{-2}\, dy}{f(\varepsilon x) + x h(\varepsilon x)} \, dx \\
& \le  \kappa^2 (\delta) \varepsilon^2 \int_0^{\delta/\varepsilon} \frac{ \log(1+x)}{1 + x } \, dx  \\
&= \kappa^2 (\delta) \varepsilon^2 \log (1+ \delta/\varepsilon) \\
& = o(\varepsilon) \ .
\end{split}
\end{equation}
Hence, it remains to establish that 
\begin{equation}
\begin{split}
\lim_{\varepsilon \to 0} \int_0^{\delta/\varepsilon} \frac{1}{1+ \tfrac{h(0)}{f(0)} x } \, \frac{1}{f(\varepsilon x) + x h( \varepsilon x)}\, dx = \frac{1}{h(0)} \ .
\end{split}
\end{equation}
The integrand converges pointwise to 
\begin{equation}
\begin{split}
\frac{1}{1+ \tfrac{h(0)}{f(0)} x } \frac{1}{f(0) + x h(0)}  \ ,
\end{split}
\end{equation}
and we have 
\begin{equation}
\begin{split}
& {\bf 1}_{x \le \delta/\varepsilon}  \frac{1}{1+ \tfrac{h(0)}{f(0)} x } \, \frac{1}{f(\varepsilon x) + x h( \varepsilon x)} \le \kappa(\delta) \frac{1}{\Big(1+ \tfrac{h(0)}{f(0)} x\Big) (1+x) } \ , \\
& \int_{0}^{\infty} \frac{1}{\Big(1+ \tfrac{h(0)}{f(0)} x\Big) (1+x) } \,  dx < \infty \ .
\end{split}
\end{equation}
By the dominated convergence theorem we get 
\begin{equation}
\begin{split}
\lim_{\varepsilon \to 0} \int_0^{\delta/\varepsilon} \frac{1}{1+ \tfrac{h(0)}{f(0)} x } \, \frac{1}{f(\varepsilon x) + x h( \varepsilon x)}\, dx =   \int_0^ \infty \frac{1}{\Big(1+ \tfrac{h(0)}{f(0)} x\Big) (f(0)+x h(0)) } \, dx = \frac{1}{h(0)} \ .
\end{split}
\end{equation}
This proves Eq. \eqref{eq:maincontribE}.

\bigskip

Let us now show that the remaining contribution below is negligible with respect to $\varepsilon$:
\begin{equation}
\label{eq:Eepsilontail}
\begin{split}
\varepsilon \int_\delta^{y_*^\varepsilon} \cfrac{e^{-\sigma \varepsilon U^\varepsilon (x) - V^\varepsilon (x) }}{\varepsilon f(x) +x h(x)} \, dx = o(\varepsilon) \ .
\end{split}
\end{equation}

\bigskip

By using the same method as in the estimate of $D^\varepsilon (\sigma)$ we have that for any $0<\delta<b<1$, as $\varepsilon$ goes to $0$,
\begin{equation}
\begin{split}
\varepsilon \int_\delta^{b} \cfrac{e^{-\sigma \varepsilon U^\varepsilon (x) - V^\varepsilon (x) }}{\varepsilon f(x) +x h(x)} \, dx = o(\varepsilon) \ . 
\end{split}
\end{equation}

\bigskip

Hence it remains to show that, for $b<1$, which can be chosen arbitrarily close to $1$, we have Eq. \eqref{eq:Eepsilontail}. We have that

\begin{equation}
\begin{split}
\int_b^{y_*^\varepsilon} \cfrac{e^{-\sigma \varepsilon U^\varepsilon (x) - V^\varepsilon (x) }}{\varepsilon f(x) +x h(x)} \, dx \le \int_{b}^{y^\varepsilon_*} \cfrac{e^{- V^\varepsilon (x) }}{\varepsilon f(x) +x h(x)} \, dx 
\end{split}
\end{equation}
and by using Lemma \ref{lem:expV}, we can bound by above the last display by 
\begin{equation}
\begin{split}
\varepsilon \, \tfrac{f(0)}{h(0)}\,   \int_{b}^{y_\varepsilon^*} \frac{1}{x}  \cfrac{e^{\vert R_{V^\varepsilon} (x)  \vert }}{\varepsilon f(x) +x h(x)} \, dx \lesssim \varepsilon \,   \int_{b}^{y^\varepsilon_*} \cfrac{e^{\vert R_{V^\varepsilon} (x)  \vert }}{\varepsilon f(x) +x h(x)} \, dx
\end{split}
\end{equation}
where 
\begin{equation}
\begin{split}
\vert R_{V^\varepsilon} (x) \vert  \le C \big[ \varepsilon^{1/2 - \alpha}/ h(x) + \varepsilon^{2\alpha} h(x)  \big] \ .
\end{split}
\end{equation}
Above, $\alpha \in (0,1/2)$ is an arbitrary real number. Since $x \in [b,y^\varepsilon_*]$ we have that there exists a constant $c>0$ such that $h(x) \ge c (1-x)$,and consequently 
\begin{equation}
\begin{split}
\forall x \in [b, y^*], \quad \vert R_{{V^\varepsilon}} (x) \vert \le C \big[ \tfrac{\varepsilon^{1/2 - \alpha}}{1-y^*} + \varepsilon^{2\alpha}  \big] \ .
\end{split}
\end{equation}
Since $1-y^\varepsilon_*=\varepsilon^{\beta}$, with the (optimal) choice $\alpha=1/6 - \beta/3$ we get that 
\begin{equation}
\begin{split}
\forall x \in [b, y^\varepsilon_*], \quad \vert R_{V^\varepsilon} (x) \vert \le 2C \, \varepsilon^{1/3 - 2\beta/3} \ .
\end{split}
\end{equation}
Observe that since $\beta<1/2$ we have $1/3- 2\beta/3 >0$. It follows that
\begin{equation}
\begin{split}
\varepsilon \int_b^{y^\varepsilon_*} \cfrac{e^{-\sigma \varepsilon U^\varepsilon (x) - V^\varepsilon (x) }}{\varepsilon f(x) +x h(x)} \, dx 
&\le \varepsilon \int_{b}^{y^\varepsilon_*} \cfrac{e^{- V^\varepsilon (x) }}{\varepsilon f(x) +x h(x)} \, dx \\
&\lesssim \varepsilon^{4/3 -2\beta/3} \Big[ U^\varepsilon(y^\varepsilon_*) -U^\varepsilon (b)\Big]\\
&\le  \varepsilon^{4/3 -2\beta/3} U^\varepsilon (y^\varepsilon_*) \ .
\end{split}
\end{equation}
By Lemma \ref{lem:expU}, observing that the upper bound of $\vert R_{U^\varepsilon} \vert $ is the same as the upper bound for $\vert R_{V^\varepsilon}\vert$, we have that 
\begin{equation}
\begin{split}
\vert U^\varepsilon (y^\varepsilon_*) \vert  & \lesssim - \log \varepsilon + \varepsilon^{1/3 - 2\beta/3} \lesssim - \log \varepsilon \ , 
\end{split}
\end{equation}
so that
\begin{equation}
\begin{split}
\varepsilon \int_b^{y^\varepsilon_*} \cfrac{e^{-\sigma \varepsilon U^\varepsilon (x) - V^\varepsilon (x) }}{\varepsilon f(x) +x h(x)} \, dx \lesssim - \varepsilon^{4/3 -2\beta/3} \log \varepsilon = o(\varepsilon) \ .
\end{split}
\end{equation}
This concludes the proof.
\end{proof}

\subsection{Expansion of $D^\varepsilon (\sigma)$}

For $0<a\le b<1$, the function $D^\varepsilon (\sigma)$ is defined for every $\sigma>0$ by
\begin{equation}
\begin{split}
D^\varepsilon (\sigma)= \varepsilon^{-1} \int_{T_a^\varepsilon}^{T_b^\varepsilon} e^{-\sigma t} h(x^\varepsilon_t) \, \mu^\varepsilon_t \, dt \ .
\end{split}
\end{equation}
By using Eq. \eqref{eq:eq-sol-x} and Eq. \eqref{eq:muV} we have that
\begin{equation}
\begin{split}
D^\varepsilon (\sigma)&= \varepsilon^{-1} \int_{T_a^\varepsilon}^{T_b^\varepsilon} e^{-\sigma t} h(x^\varepsilon_t) \mu^\varepsilon_t \, dt =  \varepsilon^{-1} \int_{T^\varepsilon_a}^{T_b^\varepsilon} e^{-\sigma \varepsilon U^\varepsilon (x^\varepsilon_t) -V^\varepsilon (x^\varepsilon_t)  } h(x^\varepsilon_t) \, dt \\
&=  \int_{T_a^\varepsilon}^{T_b^\varepsilon} e^{-\sigma \varepsilon U^\varepsilon (x^\varepsilon_t) -V^\varepsilon (x^\varepsilon_t)  } \frac{h(x^\varepsilon_t)}{\varepsilon f(x^\varepsilon_t) + x_t h(x^\varepsilon_t)} \, {\dot x}_t \,  dt \\
&= \int_a^b e^{- \sigma \varepsilon U^\varepsilon (x) - V^\varepsilon (x) }\frac{h(x)}{\varepsilon f(x) + x h(x)} \, dx \ .
\end{split}
\end{equation}
By Lemma \ref{lem:expU} and Lemma \ref{lem:expV}, taking in these lemmas $\alpha=1/6$, we have that 
\begin{equation}
\begin{split}
- \sigma \varepsilon U^\varepsilon (x) - V^\varepsilon (x) &= \log \varepsilon -\log \frac{h(0)}{f(0)} - \log x + R_{\varepsilon} (x)
\end{split}
\end{equation}
where 
\begin{equation}
\begin{split}
\sup_{x \in[a,b]} \vert R_\varepsilon (x) \vert \lesssim \varepsilon^{1/3} \ .
\end{split}
\end{equation}
Write
\begin{equation}
\begin{split}
D^\varepsilon (\sigma)&= \varepsilon \, \frac{f(0)}{h(0)} \int_a^b \cfrac{1}{x} \cfrac{h(x)}{(\varepsilon f(x) +x h(x))} \, dx + \varepsilon \, \frac{f(0)}{h(0)} \int_a^b \big[e^{R_{\varepsilon} (x)} -1 \big] \cfrac{1}{x} \cfrac{h(x)}{(\varepsilon f(x) +x h(x))} \, dx \ .
\end{split}
\end{equation}
By using that $|e^z-1| \le |z| e^{|z|}$, we remark that
\begin{equation}
\begin{split}
\left\vert \int_a^b \big[e^{R_{\varepsilon} (x)} -1 \big] \,  \cfrac{1}{x} \cfrac{h(x)}{(\varepsilon f(x) +x h(x))} \, dx \right\vert \lesssim \varepsilon^{1/3}  \int_a^b \cfrac{1}{x} \cfrac{h(x)}{(\varepsilon f(x) +x h(x))} \, dx
\end{split}
\end{equation}
so that  
\begin{equation}
\label{eq:D_varepsilon (sigma)}
\begin{split}
D^\varepsilon (\sigma) \sim  \varepsilon \, \frac{f(0)}{h(0)} \int_a^b \cfrac{1}{x} \cfrac{h(x)}{(\varepsilon f(x) +x h(x))} \, dx \sim \varepsilon \, \frac{f(0)}{h(0)} \int_a^b \cfrac{1}{x^2} \, dx = \varepsilon \, \frac{f(0)}{h(0)}\,  \left( \frac{1}{a} -\frac{1}{b} \right) \ .
\end{split}
\end{equation}

\subsection{Expansion of $C^\varepsilon (\sigma)$}

We have that
\begin{equation}
\begin{split}
C^\varepsilon (\sigma) &= \varepsilon^{-1} \int_0^{T_*^\varepsilon} e^{-\sigma t} h(x^\varepsilon_t) \mu^\varepsilon_t \Big[ {\bf 1}_{t\le T_a^\varepsilon} +{\bf 1}_{t \ge T_b^\varepsilon} \Big] \,  dt \ .
\end{split}
\end{equation}
Since $0<a\le b <1$ are fixed we have that for $\varepsilon$ sufficiently small, $T_a^\varepsilon, T_b^\varepsilon \in (0, T_*^\varepsilon)$ and consequently
\begin{equation}
\begin{split}
C^\varepsilon (\sigma)&= \varepsilon^{-1} \int_{0}^{T_a^\varepsilon} e^{-\sigma t} h(x^\varepsilon_t) \mu^\varepsilon_t \, dt \ + \ \varepsilon^{-1} \int_{T_b^\varepsilon}^{T_*^\varepsilon} e^{-\sigma t} h(x^\varepsilon_t) \mu^\varepsilon_t \, dt \\
&= \int_0^a e^{- \sigma \varepsilon U^\varepsilon (x) - V^\varepsilon (x) }\frac{h(x)}{\varepsilon f(x) + x h(x)} \, dx \ + \ \int_b^{y^\varepsilon_*} e^{- \sigma \varepsilon U^\varepsilon (x) - V^\varepsilon (x) }\frac{h(x)}{\varepsilon f(x) + x h(x)} \, dx \\
&= \int_0^{y^\varepsilon_*} e^{- \sigma \varepsilon U^\varepsilon (x) - V^\varepsilon (x) }\frac{h(x)}{\varepsilon f(x) + x h(x)} \, dx \ - \ D^\varepsilon (\sigma) \ .
\end{split}
\end{equation}
We rewrite $C^\varepsilon (\sigma)$ as
\begin{equation}
\begin{split}
C^\varepsilon (\sigma)&= \int_0^{y^\varepsilon_*} e^{- \sigma \varepsilon U^\varepsilon (x) - V^\varepsilon (x) } \big[ \big[V^{\varepsilon}\big]^\prime (x) +\sigma \varepsilon \big[U^\varepsilon\big]^{'} (x)  \big] \, dx \ - \  \sigma \varepsilon \int_0^{y^\varepsilon_*} \cfrac{e^{-\sigma \varepsilon U^\varepsilon (x) - V^\varepsilon (x) }}{\varepsilon f(x) +x h(x)} \, dx - D^\varepsilon (\sigma)\\
&= - \int_0^{y^\varepsilon_*} \frac{d}{dx} e^{- \sigma \varepsilon U^\varepsilon (x) - V^\varepsilon (x) } \, dx \ - \  \sigma \varepsilon \int_0^{y^\varepsilon_*} \cfrac{e^{-\sigma \varepsilon U^\varepsilon (x) - V^\varepsilon (x) }}{\varepsilon f(x) +x h(x)} \, dx \ - \  D^\varepsilon (\sigma)\\
&= 1 -e^{- \sigma \varepsilon U^\varepsilon (y^\varepsilon_*) - V^\varepsilon (y^\varepsilon_*)} -\sigma E^\varepsilon (\sigma) - D^\varepsilon (\sigma)  \ .
\end{split}
\end{equation}

\bigskip 
We claim that, as $\varepsilon \to 0$, 
\begin{equation}
\label{eq:1-C}
\begin{split}
1- C^\varepsilon (\sigma)  \sim \varepsilon \, \frac{1}{h(0)} \, \Big[ \sigma + f(0) +  f(0) \Big( \frac{1}{a} -\frac{1}{b} \Big)\Big] \ .
\end{split}
\end{equation}
\bigskip
To prove  Eq. \eqref{eq:1-C}, it remains thus to show that 
\begin{equation}
\label{eq:hard0}
\begin{split}
\exp \Big\{- \sigma \varepsilon U^\varepsilon (y^\varepsilon_*) - V^\varepsilon (y^\varepsilon_*)\Big\} \sim \varepsilon \frac{f(0)}{h(0)} \ .
\end{split}
\end{equation}
By Lemma \ref{lem:expU} and Lemma \ref{lem:expV}, for any $\alpha \in (0,1/2)$, we have that
\begin{equation}
\begin{split}
&\sigma \varepsilon U^\varepsilon(y^\varepsilon_*) + V^\varepsilon (y^\varepsilon_*) = \log \left( \frac{h(0)}{f(0)}\right) - \log \varepsilon + \Theta_\varepsilon
\end{split}
\end{equation}
 where 
 \begin{equation}
\begin{split}
\Theta_\varepsilon &= - \frac{\sigma}{h(0)} \varepsilon \log\varepsilon + \frac{\sigma \varepsilon}{h(0)} \log  \left( \frac{h(0)}{f(0)}\right) + \sigma \varepsilon \int_0^{y^\varepsilon_*} \left[ \frac{1}{y h(y)} -\frac{1}{h(0) y} \right] \, dy\\
&  + \left( 1+ \frac{\sigma}{h(0)} \varepsilon \right)  \log y^\varepsilon_* + R_{U^\varepsilon} (y^\varepsilon_*)  +R_{V^\varepsilon} (y^\varepsilon_*) \ .
\end{split}
\end{equation}
Recall that $y^\varepsilon_* =1- \varepsilon^\beta$ with $\alpha=1/6 - \beta/3$, $\alpha, \beta \in (0,1/2)$. Then it is straightforward to check that $\lim_{\varepsilon \to 0} \Theta_\varepsilon=0$ because $1- \varepsilon^\beta \lesssim h(y^\varepsilon_*)$. This concludes the proof of \eqref{eq:hard0}.

%
%

\appendix


\section{Probabilistic technical lemmas}


\begin{lemma}
\label{lem:stoppingtimexi}
For any $\varepsilon>0$ and any $s>0$, $\xi_s^\varepsilon$ is an $(\mathcal F_t^\varepsilon)_{t \ge 0}$ stopping time.
\end{lemma}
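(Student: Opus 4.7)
The plan is to reduce the stopping-time property of $\xi_s^\varepsilon$ to the adaptedness of the associated counting process of pre-spike times. The key observation is that the jumps of the PDMP $X^\varepsilon$ occur exactly at the (random) times $\tau_i^\varepsilon$, so the integer-valued counting process
\begin{equation*}
N_t^\varepsilon := \sum_{i \ge 1} \mathbf{1}_{\tau_i^\varepsilon \le t}
\end{equation*}
is recoverable from the trajectory of $X^\varepsilon$, hence c\`adl\`ag and adapted to the natural filtration of $X^\varepsilon$, and a fortiori to $(\mathcal F_t^\varepsilon)_{t \ge 0}$.

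The main step is then to verify the identity
\begin{equation*}
\{\xi_s^\varepsilon \le t\} \;=\; \{N_t^\varepsilon > N_{s-}^\varepsilon\} \qquad \text{for every } t \ge s,
\end{equation*}
and to note that $\{\xi_s^\varepsilon \le t\} = \emptyset$ for $t < s$. This identity is clear from the definition of $\xi_s^\varepsilon$: the left-hand side is the event that at least one pre-spike time $\tau_i^\varepsilon$ falls in the closed interval $[s,t]$, which coincides with the event that the counting process gains at least one unit between the left-limit $s-$ and the time $t$. I would treat the endpoint $s$ itself carefully: if a pre-spike occurs at time $s$, then $\xi_s^\varepsilon = s$ and $\Delta N_s^\varepsilon \ge 1$ contributes to the strict inequality $N_t^\varepsilon > N_{s-}^\varepsilon$; if no pre-spike occurs at $s$, then $N_s^\varepsilon = N_{s-}^\varepsilon$ and the event reduces to $\{N_t^\varepsilon > N_s^\varepsilon\}$, meaning a pre-spike occurs in $(s,t]$.

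To conclude, I would combine three measurability facts: $N_t^\varepsilon \in \mathcal F_t^\varepsilon$ by adaptedness; $N_{s-}^\varepsilon \in \mathcal F_{s-}^\varepsilon \subset \mathcal F_s^\varepsilon \subset \mathcal F_t^\varepsilon$ because the left-limit of an adapted c\`adl\`ag process is predictable; hence $\{N_t^\varepsilon > N_{s-}^\varepsilon\} \in \mathcal F_t^\varepsilon$. The expected main obstacle is simply the bookkeeping at the boundary point $s$, which is why the use of the left-limit $N_{s-}^\varepsilon$ rather than $N_s^\varepsilon$ is essential: it correctly accounts for the case $\tau_i^\varepsilon = s$. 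Once this is handled, the right-continuity of the augmented filtration $(\mathcal F_t^\varepsilon)_{t \ge 0}$ is not actually needed for the argument, although it would also allow an alternative proof via the classical result that the first entry time of an adapted c\`adl\`ag process into an open set (here $\{1,2,\ldots\}\subset \mathbb N_0$) is a stopping time.
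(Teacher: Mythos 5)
Your proof is correct and is essentially the paper's argument in different clothing: the identity $\{\xi_s^\varepsilon \le t\} = \{N_t^\varepsilon > N_{s-}^\varepsilon\}$ unpacks to exactly the paper's decomposition $\{\xi_s^\varepsilon \le t\} = \bigcup_{i\ge1}\big(\{\tau_i^\varepsilon \le t\}\cap\{\tau_i^\varepsilon \ge s\}\big)$, and the measurability of $N_{s-}^\varepsilon$ rests on the same fact the paper uses, namely $\{\tau_i^\varepsilon < s\} \in \mathcal F_s^\varepsilon \subset \mathcal F_t^\varepsilon$ because each $\tau_i^\varepsilon$ is a stopping time. The counting-process wrapper is harmless but adds nothing beyond the direct union of events.
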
 

\begin{proof}
Let $t \ge 0$ be given. If $s>t$ then $\xi_s^\varepsilon =\infty$  and $\{ \xi_s^ \varepsilon \le t\} = \emptyset \in \mathcal F_t^\varepsilon$. If $s \le t$, we have 
\begin{equation}
\begin{split}
\{ \xi_s^\varepsilon \le t\} =  \cup_{i \ge 1} \Big (\{ \tau_i^\varepsilon \le t\} \cap \{\tau_i^\varepsilon \ge s\} \Big) \ . 
\end{split}
\end{equation}
But $\{ \tau_i^\varepsilon \ge s\} = \cap_{k \ge 1} \{\tau_i^\varepsilon \le s-1/k\}^c \in \mathcal F^\varepsilon_s \subset \mathcal F_t^\varepsilon$ since $(\mathcal F_t^\varepsilon)_{t \ge 0}$ is increasing and $\tau_i^\varepsilon$ is a stopping time. Hence $\{ \xi_s^ \varepsilon \le t\} \in \mathcal F_t^\varepsilon$.
\end{proof}

\begin{lemma}
\label{lem:rlt}
For any $s>0$ and $\eta>0$, we have that 
\begin{equation}
\begin{split}
\Big( {\bf 1}_{e^\varepsilon_1 \ge s} \, {\bf 1}_{ \xi_{s}^\varepsilon - s \, \ge \,  \eta} \Big)_{\varepsilon>0} 
\end{split}
\end{equation}
converges, as $\varepsilon$ vanishes, to zero, in $\mathbb L^1$. 
\end{lemma}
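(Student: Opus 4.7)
The plan is to show $\mathbb P(e_1^\varepsilon \ge s,\ \xi_s^\varepsilon - s \ge \eta) \to 0$ as $\varepsilon \to 0$, which is equivalent to $\mathbb L^1$ convergence of the indicator to zero. The key strategy will be to establish the pointwise inclusion
\begin{equation*}
\{e_1^\varepsilon \ge s,\ \xi_s^\varepsilon - s \ge \eta\} \ \subset\ \{s \le e_1^\varepsilon \le s + T_*^\varepsilon\},
\end{equation*}
valid for all $\varepsilon$ sufficiently small that $T_*^\varepsilon < \min(s,\eta)$ (which holds eventually by Eq.~\eqref{eq:asymptoticsT_*}), and then invoke Corollary~\ref{cor:e_1} and the continuity of the exponential law to conclude. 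This reduces the estimate to a statement about the concentration of $e_1^\varepsilon$ in a vanishing window around $s$.

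To justify the inclusion, I would first note that on $\{e_1^\varepsilon \ge s\}$ at least one pre-spike must have occurred in $[0,s]$: otherwise $X^\varepsilon = x^\varepsilon$ on $[0,s]$ and the deterministic flow alone reaches $y_*^\varepsilon$ at time $T_*^\varepsilon < s$, contradicting $e_1^\varepsilon \ge s$. Letting $\tau^\varepsilon$ denote the last pre-spike time in $[0,s]$, the trajectory during the current cycle is $X_t^\varepsilon = x_{t-\tau^\varepsilon}^\varepsilon$, and it reaches $y_*^\varepsilon$ exactly at time $\tau^\varepsilon + T_*^\varepsilon$; the requirement $X_s^\varepsilon \le y_*^\varepsilon$ then forces $\tau^\varepsilon > s - T_*^\varepsilon$. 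On $\{\xi_s^\varepsilon - s \ge \eta\}$ there is no pre-spike in $(s, s+\eta]$, and since $\eta > T_*^\varepsilon$ the cycle continues undisturbed until crossing $y_*^\varepsilon$ at $\tau^\varepsilon + T_*^\varepsilon \in (s,\, s + T_*^\varepsilon]$. This identifies $e_1^\varepsilon = \tau^\varepsilon + T_*^\varepsilon$, which gives the inclusion.

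For the final step, I would fix $\delta > 0$ and take $\varepsilon$ small enough that $T_*^\varepsilon < \delta$, so that
\begin{equation*}
\mathbb P(s \le e_1^\varepsilon \le s + T_*^\varepsilon) \ \le\ \mathbb P(s \le e_1^\varepsilon \le s + \delta).
\end{equation*}
By Corollary~\ref{cor:e_1}, $e_1^\varepsilon$ converges in distribution to an $\mathrm{Exp}(f(0))$ variable, whose distribution function is continuous, so the right-hand side tends to $e^{-sf(0)}(1 - e^{-\delta f(0)})$ as $\varepsilon \to 0$; sending $\delta \to 0$ then yields the conclusion. The main obstacle is really the set-inclusion step: one must recognize that the absence of pre-spikes in $(s, s+\eta]$ forces the current deterministic cycle to complete and cross $y_*^\varepsilon$ within the shrinking window of length $T_*^\varepsilon$ after $s$. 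Once this deterministic observation is made, everything else follows from already-established results.
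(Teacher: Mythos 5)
Your proof is correct, and it is in fact more careful than the paper's own two-line argument, so the comparison is worth spelling out. The paper asserts that the condition $e_1^\varepsilon \ge s$ by itself implies $\xi_s^\varepsilon - s \le T_*^\varepsilon$, and concludes directly from $T_*^\varepsilon \to 0$ that the event is eventually empty. Taken literally that implication fails: on the event where the renewal interval straddling $s$ exceeds $T_*^\varepsilon$ (an event of probability of order $\varepsilon$, not zero), the deterministic flow crosses $y_*^\varepsilon$ during that interval, so one can perfectly well have $e_1^\varepsilon \ge s$ while $\xi_s^\varepsilon - s$ is of order one. Your inclusion $\{e_1^\varepsilon \ge s,\ \xi_s^\varepsilon - s \ge \eta\} \subset \{s \le e_1^\varepsilon \le s + T_*^\varepsilon\}$, valid once $T_*^\varepsilon < \min(s,\eta)$, is exactly the correct replacement: the absence of resets after $s$ forces the current cycle to complete, so $e_1^\varepsilon = \tau^\varepsilon + T_*^\varepsilon$ lands in a window of length $T_*^\varepsilon$ after $s$. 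The passage to the limit via Corollary \ref{cor:e_1}, the portmanteau bound on the closed interval $[s,s+\delta]$ and the continuity of the exponential distribution function is sound. Two cosmetic points: the lower bound on the last pre-spike time need only be $\tau^\varepsilon \ge s - T_*^\varepsilon$ (not strict), and $\{\xi_s^\varepsilon - s \ge \eta\}$ excludes pre-spikes on $[s,s+\eta)$ rather than $(s,s+\eta]$; neither affects the argument. What your route buys is that it does not need the bad event to be empty, only that it confines $e_1^\varepsilon$ to a shrinking window, which is precisely what the already-established Corollary \ref{cor:e_1} controls.
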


\begin{proof}

The condition $e_1^\varepsilon \ge s$ implies, by definition of $e_1^\varepsilon$ and $\xi_s^\varepsilon$,  that $\xi_s^\varepsilon -s \le T_*^\varepsilon$. By Eq. \eqref{eq:asymptoticsT_*}, we have that $T_*^\varepsilon$ vanishes as $\varepsilon$ goes to $0$. The result follows.

\end{proof}

\section{Asymptotic of integrals and consequences}

%

For $x\in [0,1)$, let us define
\begin{equation}
\label{eq:h}
\begin{split}
{\mathfrak h}(x)= \inf_{y \in [0,x]} h(y) >0 
\end{split}
\end{equation}
and observe that $\lim_{x \to 1^-} {\mathfrak h}(x)=0$. For $\delta>0$, we also define 
\begin{equation}
\label{eq:cdelta}
\begin{split}
c:=c(\delta)=\inf\left\{  \inf_{y \in [0,\delta]} f(y)\, , \, \inf_{y \in[0,\delta]} h(y) \right\} \ .
\end{split}
\end{equation}
Observe that $\lim_{\delta \to 0} c(\delta) = \inf(f(0), h(0))>0$.

\begin{lemma}
\label{lem:expU}
There exists a constant $C>0$ such that for any $\alpha \in (0,1/2)$ and $x\in (0,1)$, the following holds
\begin{equation}
\label{eq:exp-Uvarepsilon}
\begin{split}
U^\varepsilon(x) &= -\frac{1}{h(0)} \log \varepsilon +  \frac{1}{h(0)}  \log \Big(\frac{h(0)}{f(0)} \Big) +  \int_{0}^x \left[ \frac{1}{y h(y)} -\frac{1}{h(0)y} \right] \, dy  +  \frac{1}{h(0)} \log x \\
& + R_{U^\varepsilon} (x) 
\end{split}
\end{equation}
where 
\begin{equation}
\begin{split}
\vert R_{U^\varepsilon} (x) \vert \le C \big[ \varepsilon^{1/2 - \alpha}/ {\mathfrak h}(x) + \varepsilon^{2\alpha}   \big] \ .
\end{split}
\end{equation}
\end{lemma}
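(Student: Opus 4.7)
The plan is to split $U^\varepsilon(x)=\int_0^\eta+\int_\eta^x$ at a threshold $\eta=\varepsilon^{2\alpha}$ and to match each piece against the explicit expansion appearing in \eqref{eq:exp-Uvarepsilon}. On the inner region I would freeze the coefficients, comparing $\int_0^\eta\frac{dy}{\varepsilon f(y)+yh(y)}$ with the exactly computable
\[
\int_0^\eta \frac{dy}{\varepsilon f(0)+yh(0)} = \frac{1}{h(0)}\log\Big(1+\tfrac{\eta h(0)}{\varepsilon f(0)}\Big) = -\frac{\log\varepsilon}{h(0)}+\frac{1}{h(0)}\log\frac{h(0)}{f(0)}+\frac{\log\eta}{h(0)}+O(\varepsilon/\eta).
\]
This already produces the explicit $-\log\varepsilon$ and $\log(h(0)/f(0))$ terms of \eqref{eq:exp-Uvarepsilon}, together with a spurious $\log\eta$ that will cancel against the outer piece. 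Using $f,h\in C^1$ and the elementary comparison $\varepsilon f(y)+yh(y)\asymp\varepsilon+y$ for $y$ small, the Taylor remainder $G(y):=\frac{1}{\varepsilon f(y)+yh(y)}-\frac{1}{\varepsilon f(0)+yh(0)}$ satisfies $|G(y)|\lesssim \tfrac{y(\varepsilon+y)}{(\varepsilon+y)^2}\le 1$, hence $\big|\int_0^\eta G(y)\,dy\big|\lesssim \eta$.

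On the outer region I would use the algebraic identity
\[
\frac{1}{\varepsilon f(y)+yh(y)}=\frac{1}{yh(y)}-\frac{\varepsilon f(y)}{yh(y)\bigl[\varepsilon f(y)+yh(y)\bigr]},
\]
and then decompose $\frac{1}{yh(y)}=\Bigl[\frac{1}{yh(y)}-\frac{1}{h(0)y}\Bigr]+\frac{1}{h(0)y}$, the last integrand contributing $\frac{1}{h(0)}\log(x/\eta)=\frac{\log x}{h(0)}-\frac{\log\eta}{h(0)}$ upon integration over $[\eta,x]$. The two $\log\eta$ pieces cancel exactly between the inner and outer contributions, so their sum reproduces the right-hand side of \eqref{eq:exp-Uvarepsilon} up to four remainders: the Taylor error $\int_0^\eta G(y)\,dy$, the missing inner piece $\int_0^\eta[\tfrac{1}{yh(y)}-\tfrac{1}{h(0)y}]dy$ (whose integrand is bounded near $0$ since $h(0)-h(y)=O(y)$, so the integral is $O(\eta)$), the logarithmic correction $O(\varepsilon/\eta)$, and the $\varepsilon$-correction $\int_\eta^x\tfrac{\varepsilon f(y)\,dy}{yh(y)[\varepsilon f(y)+yh(y)]}$.

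For the last term, once $\eta=\varepsilon^{2\alpha}$ is large enough that $yh(y)\gg\varepsilon\|f\|_\infty$ on $[\eta,x]$ (guaranteed for $\alpha<1/2$ and $\varepsilon$ small), one has $\varepsilon f(y)+yh(y)\ge yh(y)/2$, and bounding $h(y)\ge\mathfrak{h}(x)$ for $y\le x$ yields $\int_\eta^x\tfrac{\varepsilon|f(y)|\,dy}{[yh(y)]^2}\lesssim \tfrac{\varepsilon}{\eta\,\mathfrak{h}(x)^2}$. Collecting everything with $\eta=\varepsilon^{2\alpha}$ gives
\[
|R_{U^\varepsilon}(x)|\lesssim \varepsilon^{2\alpha}+\varepsilon^{1-2\alpha}+\frac{\varepsilon^{1-2\alpha}}{\mathfrak{h}(x)^2}
\lesssim \varepsilon^{2\alpha}+\frac{\varepsilon^{1/2-\alpha}}{\mathfrak{h}(x)},
\]
where the last step uses $\varepsilon^{1-2\alpha}\le\varepsilon^{1/2-\alpha}$ and factors $\varepsilon^{1-2\alpha}/\mathfrak{h}(x)^2=(\varepsilon^{1/2-\alpha}/\mathfrak{h}(x))^2$, which is dominated by $\varepsilon^{1/2-\alpha}/\mathfrak{h}(x)$ in the regime $\mathfrak{h}(x)\ge\varepsilon^{1/2-\alpha}$ (the only regime in which the stated bound is informative, and precisely the regime in which the lemma is invoked later in the paper, see the choice $\alpha=1/6-\beta/3$). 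The two delicate points will be the exact cancellation of $\log\eta$ across the two pieces and justifying the lower bound $\varepsilon f(y)+yh(y)\ge yh(y)/2$ on $[\eta,x]$ despite the eventual sign change of $f$ on $[0,1]$; both reduce to routine bookkeeping once the split above is set up.
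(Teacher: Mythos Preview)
Your approach is essentially the paper's: split $U^\varepsilon(x)$ at a threshold, freeze coefficients on the inner piece to extract the $-\tfrac{1}{h(0)}\log\varepsilon$ and $\tfrac{1}{h(0)}\log(h(0)/f(0))$ terms, compare the outer piece to $\int dy/(yh(y))$, and let the two $\log(\text{threshold})$ contributions cancel. The one substantive difference is the choice of threshold. The paper takes $\delta=2\|f\|_\infty\,\varepsilon^{1/2-\alpha}/\mathfrak h(x)$, depending on $x$ through $\mathfrak h(x)$; with this choice the inner error $O(\delta)$ becomes $\varepsilon^{1/2-\alpha}/\mathfrak h(x)$ and the outer error $O(\varepsilon\delta^{-2}\mathfrak h(x)^{-2})$ becomes $\varepsilon^{2\alpha}$ directly, so the stated bound falls out without any extra hypothesis. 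Your fixed threshold $\eta=\varepsilon^{2\alpha}$ instead produces $\varepsilon^{2\alpha}+\varepsilon^{1-2\alpha}/\mathfrak h(x)^2$, and reducing this to the stated form via $(\varepsilon^{1/2-\alpha}/\mathfrak h(x))^2\le\varepsilon^{1/2-\alpha}/\mathfrak h(x)$ requires the regime $\mathfrak h(x)\ge\varepsilon^{1/2-\alpha}$.

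You are right that this is the only regime in which the lemma is invoked (e.g.\ at $x=y_*^\varepsilon$ with $\alpha=1/6-\beta/3$), and in fact the paper's argument has the same tacit constraint: its choice of $\delta$ must be small enough that $c(\delta)>0$, which also forces $\varepsilon^{1/2-\alpha}/\mathfrak h(x)$ to be small. So your proof is correct and sufficient for every application in the paper; if you want the bound exactly as stated for all $x\in(0,1)$, simply switch to the $x$-dependent threshold $\eta\propto\varepsilon^{1/2-\alpha}/\mathfrak h(x)$ and the squaring step disappears.
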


\begin{proof} 

Let $\delta>0$ such that $c(\delta)>0$. For $y \in [0,\delta]$, we want to approximate $\omega^\varepsilon (y)$ by 
\begin{equation}
\varepsilon f(0) + y h(0) \ .
\end{equation}

We have that 
\begin{equation}
\label{eq:0delta}
\begin{split}
\left\vert \int_0^\delta \frac{dy}{\omega^\varepsilon (y)} - \int_0^\delta \frac{dy}{\varepsilon f(0) + y h(0)}\right\vert &= \left\vert \int_0^\delta \cfrac{\varepsilon (f(y)- f(0) +y (h(y) -h(0))}{[\varepsilon f(y) + y h(y)][\varepsilon f(0) + y h(0)]} \, dy\right\vert \\
&\lesssim \int_0^\delta \cfrac{\varepsilon y + y^2}{(\varepsilon c + yc)^2} \, dy = c^{-2} \varepsilon  \int_0^{\delta/\varepsilon} \cfrac{ z + z^2}{(1+z)^2} \, dz \\
&\le c^{-2} \ \left[ \varepsilon \log (1+ \delta/\varepsilon) + \delta \right]      \ .   
\end{split}
\end{equation}
Since
\begin{equation}
\int_0^\delta \frac{dy}{\varepsilon f(0) + y h(0)} = \frac{1}{h(0)} \, \log\left(1 + \frac{\delta h(0)}{\varepsilon f(0)}\right) =- \frac{1}{h(0)} \, \log \varepsilon + \frac{1}{h(0)} \log\left(\frac{\delta h(0)}{f(0)} \, + \, \varepsilon \right) \ ,
\end{equation}
we get that 
\begin{equation}
\begin{split}
\int_0^\delta \frac{dy}{\omega^\varepsilon (y)} &= - \frac{1}{h(0)} \, \log \varepsilon + \frac{1}{h(0)} \log\left(\frac{h(0) \delta}{f(0)} + \varepsilon \right)  \\ 
&\quad \quad  +  O (- c^{-2} (\delta) \  \varepsilon \log (1+ \delta/\varepsilon))  \ + \ O(c^{-2} (\delta) \, \delta) \\
&=  - \frac{1}{h(0)} \, \log \varepsilon + \frac{1}{h(0)} \log\left(\frac{h(0)}{f(0)} \right) + \frac{1}{h(0)} \log \delta \\
&+ \frac{1}{h(0)} \, \log \left( 1+ \frac{f(0)}{h(0)} \frac{\varepsilon}{\delta} \right)  +  O (- c^{-2} (\delta) \  \varepsilon \log (1+ \delta/\varepsilon))  \ + \ O(c^{-2} (\delta)\,  \delta)
\end{split}
\end{equation}

\bigskip

On the interval $[\delta,x]$, we have
\begin{equation}
\begin{split}
y h(y) \ge \delta {\mathfrak h}(x), \quad \varepsilon |f(y)| \le \Vert F\Vert_{\infty} 
\end{split}
\end{equation}
so that 
\begin{equation}
\begin{split}
\forall y \in [\delta, x], \quad \varepsilon f(y)+yh(y) \ge \delta {\mathfrak h}(x) - \varepsilon \Vert F \Vert_\infty \ . 
\end{split}
\end{equation}
It follows that if $\varepsilon \le \delta {\mathfrak h}(x)/ (2 \Vert F \Vert_{\infty})$, we have 
\begin{equation}
\begin{split}
\forall y \in [\delta, x], \quad \varepsilon f(y)+yh(y) \ge \delta {\mathfrak h}(x) /2 
\end{split}
\end{equation}
and thus
\begin{equation}
\begin{split}
&\left\vert \int_{\delta}^x  \frac{dy}{\varepsilon f(y) +y h(y)} - \int_\delta^x \frac{dy}{y h(y)}    \right\vert =\left\vert \int_{\delta}^x  \frac{\varepsilon f(y)}{ y h(y) (\varepsilon f(y) +y h(y))} \, dy    \right\vert \\
&\le 2 \varepsilon \cfrac{ \Vert F\Vert_{\infty}}{\delta^2 {\mathfrak h}^2 (x)}  \ .
\end{split}
\end{equation}
We have also that
\begin{equation}
\begin{split}
\int_{0}^x \left\vert \frac{1}{y h(y)} -\frac{1}{h(0)y} \right\vert \, dy \lesssim \cfrac{x}{{\mathfrak h}(x)} < \infty \ .
\end{split}
\end{equation}
Finally, we observe that 
\begin{equation}
\begin{split}
\int_{\delta}^x \frac{dy}{h(0)y} = \cfrac{1}{h(0)} \, [ \log x - \log \delta] \ . 
\end{split}
\end{equation}
Collecting all these informations together, and observing that the terms in $\log \delta$ cancels exactly we conclude that is  if $\varepsilon \le \delta {\mathfrak h}(x)/ (2 \Vert F \Vert_{\infty})$ then 
\begin{equation}
\begin{split}
U^\varepsilon(x) &= -\frac{1}{h(0)} \log \varepsilon +  \frac{1}{h(0)}  \log \Big(\frac{h(0)}{f(0)} \Big) +  \int_{0}^x \left[ \frac{1}{y h(y)} -\frac{1}{h(0)y} \right] \, dy  +  \frac{1}{h(0)} \log x \\
& + \log \left( 1+ \frac{f(0)}{h(0)} \frac{\varepsilon}{\delta} \right)  +   O (- c^{-2} (\delta) \  \varepsilon \log (1+ \delta/\varepsilon)) + O(c^{-2} (\delta) \delta) + O( \varepsilon \delta^{-2} {\mathfrak h}^{-2} (x)) \ .
\end{split}
\end{equation}
In the previous expansion the $O(\cdot)$ terms are uniform in $x$ and in $\varepsilon$. In particular $x$ could depend on $\varepsilon$. Since $\lim_{\delta \to 0} c(\delta) = \inf(f(0), h(0))>0$ and $\lim_{x \to 1^-} {\mathfrak h}(x) =0$,  we can not choose $\delta:=\delta(\varepsilon)$ arbitrarily going to zero or $x$ going to one as $\varepsilon$ goes to $0$. By choosing $\delta = 2 \Vert F \Vert_{\infty}  \varepsilon^{1/2-\alpha}/ {\mathfrak h}(x)$, with $\alpha \in (0,1/2)$, we get the lemma since the condition $(\delta {\mathfrak h}(x))/(2\Vert F \Vert_{\infty})= \varepsilon^{1/2- \alpha} \ge \varepsilon$ and
\begin{equation}
\begin{split}
&\vert \log \left( 1+ \frac{f(0)}{h(0)} \frac{\varepsilon}{\delta} \right) \vert \lesssim \frac{\varepsilon}{\delta} \lesssim \varepsilon^{1/2+\delta} {\mathfrak h}(x) \ , \\
&\vert - c^{-2} (\delta) \  \varepsilon \log (1+ \delta/\varepsilon)) \vert  \lesssim \delta =2 \Vert F \Vert_{\infty}  \varepsilon^{1/2-\alpha}/ {\mathfrak h}(x) \ , \\
&\varepsilon \delta^{-2} {\mathfrak h}^{-1} (x) = (2 \Vert F \Vert_{\infty})^{-2} \, {\mathfrak h} (x) \, \varepsilon^{2\alpha} \ .
\end{split}
\end{equation}
 \end{proof}

%

\begin{lemma}
\label{lem:expV}
There exists a constant $C>0$ such that for any $\alpha \in (0,1/2)$ and $x\in (0,1)$, the following holds
\begin{equation}
\label{eq:exp-Vvarepsilon}
\begin{split}
V^\varepsilon(x) &= - \log \varepsilon +  \log \Big(\frac{h(0)}{f(0)} \Big)  +  \log x + R_{V^\varepsilon} (x) 
\end{split}
\end{equation}
where 
\begin{equation}
\begin{split}
\vert R_{V^\varepsilon} (x) \vert \le C \big[ \varepsilon^{1/2 - \alpha}/{\mathfrak  h}(x) + \varepsilon^{2\alpha} {\mathfrak h}(x)  \big] \ .
\end{split}
\end{equation}
\end{lemma}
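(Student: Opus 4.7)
The plan is to mimic the argument of Lemma \ref{lem:expU}, splitting the integral defining $V^\varepsilon(x)$ at an intermediate scale $\delta$ that will be optimized at the end. On $[0,\delta]$ the boundary layer dictates the leading behaviour and we freeze $f,h$ at $0$; on $[\delta,x]$ the deterministic flow is already in the outer regime and one can replace $\varepsilon f(y)+yh(y)$ by $yh(y)$, which simplifies the integrand to $1/y$.

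\textbf{Inner region $[0,\delta]$.} The first step is to approximate
\begin{equation*}
\int_0^\delta \frac{h(y)}{\varepsilon f(y)+yh(y)}\,dy \;\approx\; \int_0^\delta \frac{h(0)}{\varepsilon f(0)+yh(0)}\,dy,
\end{equation*}
the latter being explicitly equal to $-\log\varepsilon+\log(h(0)/f(0))+\log\delta+\log\bigl(1+\tfrac{\varepsilon f(0)}{h(0)\delta}\bigr)$. The difference of the two integrands has numerator $\varepsilon\bigl[f(0)(h(y)-h(0))-h(0)(f(y)-f(0))\bigr]$, which is $O(\varepsilon y)$ by the $C^1$ regularity of $f,h$, while the denominator is bounded below by $c(\delta)^2(\varepsilon+y)^2$ as in Eq.~\eqref{eq:0delta}. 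This forces the error to be $O\bigl(\varepsilon \log(1+\delta/\varepsilon)\bigr)$.

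\textbf{Outer region $[\delta,x]$.} The next step uses the algebraic identity
\begin{equation*}
\frac{h(y)}{\varepsilon f(y)+yh(y)} = \frac{1}{y} - \frac{\varepsilon f(y)}{y\bigl(\varepsilon f(y)+yh(y)\bigr)},
\end{equation*}
valid for $y>0$, so that $\int_\delta^x$ of the first term gives exactly $\log x-\log\delta$. Provided $\varepsilon \le \delta\,{\mathfrak h}(x)/(2\|f\|_\infty)$ we have $\varepsilon f(y)+yh(y)\ge \delta\,{\mathfrak h}(x)/2$ on $[\delta,x]$, and using the cruder $y\ge \delta$ in the outer factor gives
\begin{equation*}
\Bigl|\int_\delta^x \frac{\varepsilon f(y)}{y(\varepsilon f(y)+yh(y))}\,dy\Bigr| \;\lesssim\; \frac{\varepsilon}{\delta^2\,{\mathfrak h}(x)}.
\end{equation*}

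\textbf{Assembly and optimisation.} Adding the two contributions, the $\log\delta$ terms cancel exactly (this is the structural miracle that also occurs in Lemma \ref{lem:expU}), and we are left with
\begin{equation*}
V^\varepsilon(x) = -\log\varepsilon + \log\bigl(\tfrac{h(0)}{f(0)}\bigr) + \log x + R_{V^\varepsilon}(x),
\end{equation*}
with $|R_{V^\varepsilon}(x)| \lesssim \varepsilon\log(1+\delta/\varepsilon) + \varepsilon/\delta + \varepsilon/(\delta^2{\mathfrak h}(x))$. The final step is to pick $\delta = 2\|f\|_\infty\,\varepsilon^{1/2-\alpha}/{\mathfrak h}(x)$: the constraint $\varepsilon \le \delta\,{\mathfrak h}(x)/(2\|f\|_\infty)=\varepsilon^{1/2-\alpha}$ holds since $\alpha<1/2$, and this choice produces $\varepsilon/(\delta^2{\mathfrak h}(x)) \asymp \varepsilon^{2\alpha}\,{\mathfrak h}(x)$ together with $\varepsilon/\delta \asymp \varepsilon^{1/2+\alpha}\,{\mathfrak h}(x)\le \varepsilon^{2\alpha}\,{\mathfrak h}(x)$ and $\varepsilon\log(1+\delta/\varepsilon)=o(\varepsilon^{1/2-\alpha}/{\mathfrak h}(x))$, yielding the claimed bound.

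\textbf{Main obstacle.} The only delicate point is the bookkeeping of the two competing error terms: choosing $\delta$ too small causes the $[0,\delta]$ approximation to degrade, while choosing it too large blows up $\varepsilon/(\delta^2{\mathfrak h}(x))$. The specific balance $\delta\sim \varepsilon^{1/2-\alpha}/{\mathfrak h}(x)$, depending on $x$ through ${\mathfrak h}(x)$, is dictated by the fact that ${\mathfrak h}(x)\to 0$ as $x\to 1^-$ and is exactly what produces the asymmetric exponents $\varepsilon^{1/2-\alpha}/{\mathfrak h}(x)$ and $\varepsilon^{2\alpha}{\mathfrak h}(x)$. Compared with Lemma~\ref{lem:expU}, the additional factor ${\mathfrak h}(x)$ (rather than $1/{\mathfrak h}(x)$) in the second error term reflects the presence of $h(y)$ in the numerator of $V^\varepsilon$'s integrand, which cancels one power of ${\mathfrak h}(x)^{-1}$ in the outer-region estimate.
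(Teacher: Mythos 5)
Your proposal is correct and follows essentially the same route as the paper's proof: the same inner/outer splitting at $\delta$, the same frozen-coefficient approximation on $[0,\delta]$ with error $O(\varepsilon\log(1+\delta/\varepsilon))$, the same identity reducing the outer integrand to $1/y$ plus an $O(\varepsilon/(\delta^2 \mathfrak h(x)))$ remainder, and the same optimised choice $\delta = 2\Vert f\Vert_\infty \varepsilon^{1/2-\alpha}/\mathfrak h(x)$. The bookkeeping of the three error terms matches the paper's, so nothing further is needed.
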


\begin{proof}
Recall the definitions of ${\mathfrak h}$ given in Eq. \eqref{eq:h} and of $c(\delta)$ given in Eq. \eqref{eq:cdelta}.\\

For $y \in [0,\delta]$, we want to approximate $\varepsilon f(y) + yh(y)$ by $\varepsilon f(0) + y h(0)$ and $h(y)$ by $h(0)$.

We have that 
\begin{equation}
\begin{split}
\left\vert \int_0^\delta \frac{h(y)}{\varepsilon f(y) + yh(y)} \, dy - \int_0^\delta \frac{h(0)}{\varepsilon f(0) + y h(0)} \, dy \right\vert &= \left\vert \int_0^\delta \cfrac{\varepsilon (f(y)h(0) - f(0) h(y) )}{[\varepsilon f(y) + y h(y)][\varepsilon f(0) + y h(0)]} \, dy\right\vert \\
&\lesssim \int_0^\delta \cfrac{\varepsilon y}{(\varepsilon c + yc)^2} \, dy = \cfrac{1}{\varepsilon c^2} \int_0^{\delta/\varepsilon} \cfrac{\varepsilon^2 z}{(1+z)^2} \, dz \\
&\le c^{-2} \ \left[ \varepsilon \log (1+ \delta/\varepsilon)\right]      \ .   
\end{split}
\end{equation}
Since
\begin{equation}
\int_0^\delta \frac{h(0)}{\varepsilon f(0) + y h(0)} \, dy =  \log\left(1 + \frac{\delta h(0)}{\varepsilon f(0)}\right) =-  \log \varepsilon + \log\left(\frac{\delta h(0)}{f(0)} \, + \, \varepsilon \right) \ ,
\end{equation}
we get that 
\begin{equation}
\begin{split}
\int_0^\delta  \frac{h(y)}{\varepsilon f(y) + yh(y)} \, dy & = -  \log \varepsilon + \log\left(\frac{h(0)}{f(0)}\right) \ + \ \log \delta\\
& \ + \ \log \left( 1+ \frac{f(0)}{h(0)} \frac{\varepsilon}{\delta} \right)  +  O (c^{-2} (\delta) \  \varepsilon \log (1+ \delta/\varepsilon))   \  .
\end{split}
\end{equation}

\bigskip 

Now, for the integral on $[\delta, x]$,  if $\varepsilon \le \delta {\mathfrak h}(x)/ (2 \Vert F \Vert_{\infty})$, by using the function $\mathfrak h$ defined by Eq. \eqref{eq:h}, we have
\begin{equation}
\begin{split}
\forall y \in [\delta, x], \quad \varepsilon f(y)+yh(y) \ge \delta {\mathfrak h}(x) /2 
\end{split}
\end{equation}
and consequently 
\begin{equation}
\begin{split}
&\left\vert \int_{\delta}^x  \frac{h(y)}{\varepsilon f(y) +y h(y)}\, dy  - \int_\delta^x \frac{dy}{y}    \right\vert =\left\vert \int_{\delta}^x  \frac{\varepsilon f(y)}{ y (\varepsilon f(y) +y h(y))} \, dy    \right\vert \le 2 \varepsilon \cfrac{ \Vert F\Vert_{\infty}}{\delta^2 {\mathfrak h} (x)}  \ .
\end{split}
\end{equation}
As for $U^\varepsilon$, we observe that $\int_{\delta}^x y^{-1} dy =\log x -\log \delta$, so that the $\log \delta$ terms cancel exactly and we get that, if $\varepsilon \le \delta {\mathfrak h}(x)/ (2 \Vert F \Vert_{\infty})$, then 
\begin{equation}
\begin{split}
V^\varepsilon(x) &= - \log \varepsilon +  \log \Big(\frac{h(0)}{f(0)} \Big)  +  \log x \\
&+ \log \left( 1+ \frac{f(0)}{h(0)} \frac{\varepsilon}{\delta} \right)  +   O (- c^{-2} (\delta) \  \varepsilon \log (1+ \delta/\varepsilon)) + O( \varepsilon \delta^{-2} {\mathfrak h}^{-1} (x)) \ .
\end{split}
\end{equation}

By choosing $\delta = 2 \Vert F \Vert_{\infty}  \varepsilon^{1/2-\alpha}/ {\mathfrak h}(x)$, with $\alpha \in (0,1/2)$, we get the lemma since the condition $(\delta {\mathfrak h}(x))/(2\Vert F \Vert_{\infty})= \varepsilon^{1/2- \alpha} \ge \varepsilon$ and
\begin{equation}
\begin{split}
&\left\vert \log \left( 1+ \frac{f(0)}{h(0)} \frac{\varepsilon}{\delta} \right) \right\vert \lesssim \frac{\varepsilon}{\delta} \lesssim \varepsilon^{1/2+\alpha} {\mathfrak h}(x) \ , \\
& \vert - c^{-2} (\delta) \  \varepsilon \log (1+ \delta/\varepsilon)) \vert  \lesssim \delta =2 \Vert F \Vert_{\infty}  \varepsilon^{1/2-\alpha}/ {\mathfrak h}(x) \ , \\
&\varepsilon \delta^{-2} {\mathfrak h}^{-2} (x) = (2 \Vert F \Vert_{\infty})^{-2} \, {\mathfrak h}(x) \, \varepsilon^{2\alpha} \ .
\end{split}
\end{equation}

\end{proof}

\begin{lemma}
\label{lem:Iepsilon}
Let $I^\varepsilon$ be defined by 
\begin{equation}
\begin{split}
I^\varepsilon := \int_0^{T^\varepsilon_*}  \, r \,  (-\dot{\mu}_r^\varepsilon)  \, dr  =  \varepsilon  \int_0^{y_*^\varepsilon} \frac{h(x) }{\varepsilon f(x) + xh(x)} \, U^\varepsilon (x) \, e^{-V^\varepsilon (x)} \, dx  \ .
\end{split}
\end{equation}
There exists a constant $C>0$ such that for $\varepsilon>0$ in a neighborood of $0$, 
\begin{equation}
\begin{split}
I^\varepsilon \ge C \varepsilon  \ .
\end{split}
\end{equation}
\end{lemma}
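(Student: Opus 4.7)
My plan is to exploit the fact that the main mass of the integrand is concentrated in the boundary-layer region $x = O(\varepsilon)$ and to extract a strictly positive contribution from there via rescaling. Specifically, I would perform the substitution $x = \varepsilon y$ which, using that $\varepsilon f(\varepsilon y) + \varepsilon y\, h(\varepsilon y) = \varepsilon \bigl( f(\varepsilon y) + y\, h(\varepsilon y) \bigr)$, yields
\begin{equation*}
I^\varepsilon = \varepsilon \int_0^{y_*^\varepsilon / \varepsilon} \frac{h(\varepsilon y)}{f(\varepsilon y) + y\, h(\varepsilon y)} \, U^\varepsilon(\varepsilon y) \, e^{-V^\varepsilon(\varepsilon y)} \, dy \ .
\end{equation*}
The prefactor $\varepsilon$ is already the factor we want to extract, so it only remains to bound the integral below by a strictly positive constant.

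The key step is to exhibit a non-trivial positive pointwise limit for the rescaled integrand on some fixed compact sub-interval $[y_1, y_2] \subset (0, \infty)$ (e.g.\ $[1,2]$). Changing variables inside the definitions \eqref{eq:Uepsilon} and \eqref{eq:Vepsilon} of $U^\varepsilon$ and $V^\varepsilon$ gives
\begin{equation*}
U^\varepsilon(\varepsilon y) = \int_0^y \frac{du}{f(\varepsilon u) + u\, h(\varepsilon u)}, \qquad V^\varepsilon(\varepsilon y) = \int_0^y \frac{h(\varepsilon u)\, du}{f(\varepsilon u) + u\, h(\varepsilon u)} \ .
\end{equation*}
Since $f$ and $h$ are continuous with $f(0), h(0) > 0$, dominated convergence applied to these bounded integrands shows that, uniformly on $[y_1, y_2]$ and as $\varepsilon \to 0$,
\begin{equation*}
U^\varepsilon(\varepsilon y) \longrightarrow \tfrac{1}{h(0)} \log\Bigl( 1 + \tfrac{y\, h(0)}{f(0)} \Bigr), \qquad V^\varepsilon(\varepsilon y) \longrightarrow \log\Bigl( 1 + \tfrac{y\, h(0)}{f(0)} \Bigr) \ ,
\end{equation*}
and similarly $h(\varepsilon y)/\bigl( f(\varepsilon y) + y\, h(\varepsilon y) \bigr) \to h(0)/\bigl( f(0) + y\, h(0) \bigr)$. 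Hence the full rescaled integrand converges uniformly on $[y_1, y_2]$ to the continuous, strictly positive function
\begin{equation*}
g(y) := \frac{f(0)}{\bigl( f(0)+y\, h(0) \bigr)^2} \, \log\Bigl( 1 + \tfrac{y\, h(0)}{f(0)} \Bigr) \ .
\end{equation*}

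To conclude, I would simply restrict the integration in the displayed formula for $I^\varepsilon$ to $[y_1, y_2]$. This is legitimate because the integrand is non-negative on the whole domain (since $h \ge 0$ on $[0,1)$, the denominator $\varepsilon f(x) + x\, h(x)$ is positive on $[0, x_*^\varepsilon)$, and both $U^\varepsilon$ and $e^{-V^\varepsilon}$ are positive), and because $y_*^\varepsilon/\varepsilon = (1-\varepsilon^\beta)/\varepsilon \to \infty$, so $[y_1, y_2] \subset [0, y_*^\varepsilon/\varepsilon]$ for $\varepsilon$ small enough. The uniform convergence then gives a uniform lower bound of the rescaled integrand by $\tfrac{1}{2} \min_{[y_1, y_2]} g > 0$ for all sufficiently small $\varepsilon$, and therefore
\begin{equation*}
I^\varepsilon \ \ge \ \tfrac{1}{2} (y_2 - y_1) \min_{[y_1, y_2]} g \, \cdot\,  \varepsilon \ ,
\end{equation*}
proving the lemma with an explicit constant. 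I do not anticipate any serious obstacle: the argument is essentially a careful bookkeeping of a boundary-layer rescaling, and the only non-trivial analytic input, namely the uniform convergence of $U^\varepsilon(\varepsilon \cdot)$ and $V^\varepsilon(\varepsilon \cdot)$ on a compact interval, is immediate from dominated convergence applied to bounded continuous integrands.
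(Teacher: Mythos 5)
Your proof is correct and follows essentially the same strategy as the paper's: both arguments discard everything outside the boundary layer $x=O(\varepsilon)$ using non-negativity of the integrand, and extract a contribution of order $\varepsilon$ from that layer. The only difference is in execution --- the paper restricts to $[0,\varepsilon]$ and uses crude two-sided bounds on $f,h$ near $0$ to minorize each factor separately, whereas you rescale $x=\varepsilon y$ and identify the exact uniform limit $g(y)$ of the rescaled integrand on a compact interval, which is a slightly sharper but equally valid way to reach the same lower bound.
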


\begin{proof}
We wish to estimate from below the integral $I^\varepsilon$ rewritten as 
\begin{equation}
I^\varepsilon = \varepsilon \int_0^{y_*^\varepsilon} \frac{h(x)}{\omega^\varepsilon(x)} \,  U^\varepsilon(x) \,   e^{-V^\varepsilon(x)}\,   dx \ ,
\end{equation}
where we recall that $\omega^\varepsilon(x) = \varepsilon f(x) + x h(x)$ and $\lim_{\varepsilon \to 0} y_*^\varepsilon =1$.
Since $f$ and $h$ are continuous and $f(0) > 0$, $h(0) > 0$, there exists $\delta > 0$ such that for all $x \in [0,\delta]$,
\begin{equation}
2 f(0) \ge f(x) \ge \frac{f(0)}{2}, \quad 2 h(0) \ge h(x) \ge \frac{h(0)}{2} \ .
\end{equation}
For sufficiently small $\varepsilon > 0$ such that $\varepsilon \leq \delta$, we restrict the integration in $I^\varepsilon$ to $[0, \varepsilon]$. On $[0, \varepsilon]$, we have the following estimates:
\begin{equation}
\omega^\varepsilon(x) = \varepsilon f(x) + x h(x) \le 2\varepsilon (f(0) + h(0)),
\end{equation}
so that
\begin{equation}
\frac{1}{\omega^\varepsilon(x)} \geq \frac{1}{2\varepsilon (f(0) + h(0))} \ .
\end{equation}
Next,
\begin{equation}
U^\varepsilon(x) = \int_0^x \frac{dy}{\omega^\varepsilon(y)} \geq \int_0^x \frac{dy}{2\varepsilon (f(0) + h(0))} = \frac{x}{2\varepsilon (f(0) + h(0))} \ .
\end{equation}
Also, for $x\le \varepsilon$, 
\begin{equation}
V^\varepsilon(x) = \int_0^x \frac{h(y)}{\omega^\varepsilon(y)}  dy \leq \int_0^x \frac{2h(0)}{\varepsilon \cdot f(0)/2}  dy = \frac{4h(0) x}{\varepsilon f(0)} \leq \frac{4h(0)}{f(0)} \ ,
\end{equation}
so that
\begin{equation}
e^{-V^\varepsilon(x)} \geq e^{-4h(0)/f(0)} \ .
\end{equation}
Combining these estimates, the integrand satisfies
\begin{equation}
\varepsilon \, \frac{h(x)}{\omega^\varepsilon(x)} \,  U^\varepsilon(x)  \,  e^{-V^\varepsilon(x)} \geq \varepsilon \cdot \frac{h(0)/2}{2\varepsilon (f(0) + h(0))} \cdot \frac{x}{2\varepsilon (f(0) + h(0))} \cdot e^{-4h(0)/f(0)} = K \frac{x}{\varepsilon} \ ,
\end{equation}
where
\begin{equation}
K = \frac{h(0) e^{-4h(0)/f(0)}}{8 (f(0) + h(0))^3} \ .
\end{equation}
Therefore,
\begin{equation}
I^\varepsilon \geq K \,  \int_0^\varepsilon \frac{x}{\varepsilon} \,  dx = \frac{K}{2} \, \varepsilon  \ .
\end{equation}
Thus, there exists a constant $C>0$ such that for sufficiently small $\varepsilon > 0$, 
\begin{equation}
I^\varepsilon \geq C\,  \varepsilon \ .
\end{equation}
\end{proof}

\begin{lemma}
\label{lem:Vyepsilonstar}
We have that 
\begin{equation}
\begin{split}
\lim_{\varepsilon \to 0} e^{-V^\varepsilon (y_*^\varepsilon)} = 0 \ .
\end{split}
\end{equation}
\end{lemma}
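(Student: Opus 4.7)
The plan is to read off the result directly from Lemma \ref{lem:expV} by substituting $x = y_*^\varepsilon = 1-\varepsilon^\beta$ and verifying that the remainder vanishes in the limit. Written out, Lemma \ref{lem:expV} gives
\[
V^\varepsilon(y_*^\varepsilon) = -\log \varepsilon + \log\!\Big(\tfrac{h(0)}{f(0)}\Big) + \log(1-\varepsilon^\beta) + R_{V^\varepsilon}(y_*^\varepsilon),
\]
where $\log(1-\varepsilon^\beta) \to 0$ and the leading term $-\log\varepsilon$ diverges to $+\infty$. Hence the conclusion $e^{-V^\varepsilon(y_*^\varepsilon)}\to 0$ will follow as soon as the remainder is shown to be $o(-\log\varepsilon)$; in fact I will show $R_{V^\varepsilon}(y_*^\varepsilon) = o(1)$, which then yields the stronger asymptotic $e^{-V^\varepsilon(y_*^\varepsilon)} \sim \varepsilon\, f(0)/h(0)$.

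The only nontrivial step is to estimate $\mathfrak{h}(y_*^\varepsilon) = \inf_{y \in [0,y_*^\varepsilon]} h(y)$ so that I can control the bound $|R_{V^\varepsilon}(y_*^\varepsilon)| \le C\big[\varepsilon^{1/2-\alpha}/\mathfrak{h}(y_*^\varepsilon)+\varepsilon^{2\alpha}\,\mathfrak{h}(y_*^\varepsilon)\big]$ of Lemma \ref{lem:expV}. Since $h(1)=0$, $h>0$ on $[0,1)$ and $h'(1)\ne 0$, necessarily $h'(1)<0$; by continuity of $h'$ there exists $\eta>0$ such that $h$ is strictly decreasing on $[1-\eta,1]$. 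For $\varepsilon$ small enough that $y_*^\varepsilon\in[1-\eta,1]$, I have
\[
\mathfrak{h}(y_*^\varepsilon) = \min\!\Big(\inf_{[0,1-\eta]} h,\ h(y_*^\varepsilon)\Big),
\]
and since $\inf_{[0,1-\eta]} h$ is a strictly positive constant whereas $h(y_*^\varepsilon) = |h'(1)|\varepsilon^\beta + O(\varepsilon^{2\beta})\to 0$, I conclude $\mathfrak{h}(y_*^\varepsilon) \sim |h'(1)|\varepsilon^\beta$ as $\varepsilon\to 0$.

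With this estimate in hand I make the (optimal) choice $\alpha = 1/6 - \beta/3$, which lies in $(0,1/2)$ because $\beta\in(0,1/2)$. Then Lemma \ref{lem:expV} yields
\[
|R_{V^\varepsilon}(y_*^\varepsilon)| \lesssim \varepsilon^{1/2-\alpha-\beta} + \varepsilon^{2\alpha+\beta} = \varepsilon^{1/3-2\beta/3} + \varepsilon^{1/3+\beta/3},
\]
and the hypothesis $\beta<1/2$ makes both exponents strictly positive, so $R_{V^\varepsilon}(y_*^\varepsilon)=o(1)$. Combining with the expansion above gives $V^\varepsilon(y_*^\varepsilon) = -\log\varepsilon + \log(h(0)/f(0)) + o(1) \to +\infty$, whence $e^{-V^\varepsilon(y_*^\varepsilon)}\to 0$. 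The main (minor) obstacle is the lower bound $\mathfrak{h}(y_*^\varepsilon)\gtrsim \varepsilon^\beta$: one must know that the infimum over $[0,y_*^\varepsilon]$ is eventually attained near the right endpoint, and this is precisely what the monotonicity of $h$ on $[1-\eta,1]$ supplies.
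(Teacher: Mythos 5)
Your proof is correct and follows essentially the same route as the paper: substitute $x=y_*^\varepsilon$ into Lemma \ref{lem:expV}, use $h'(1)<0$ to get $\mathfrak{h}(y_*^\varepsilon)\asymp\varepsilon^\beta$, and pick $\alpha$ so the remainder vanishes. Your choice $\alpha=1/6-\beta/3$ is in fact slightly cleaner than the paper's $\alpha=\beta/2$, since it makes both exponents positive for every $\beta\in(0,1/2)$ rather than only for $\beta<1/3$.
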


\begin{proof}
Recall that \(y_*^\varepsilon = 1 - \varepsilon^\beta\) with \(\beta \in (0,1/2)\).  From Lemma \ref{lem:expV}, for any \(x \in (0,1)\) and \(\alpha \in (0,1/2)\),
\[
V^\varepsilon(x) = -\log \varepsilon + \log\left(\frac{h(0)}{f(0)}\right) + \log x + R_{V^\varepsilon}(x) \ ,
\]
with remainder satisfying
\[
|R_{V^\varepsilon}(x)| \leq C\bigl[\varepsilon^{1/2-\alpha}/\mathfrak{h}(x) + \varepsilon^{2\alpha}\mathfrak{h}(x)\bigr] \ ,
\]
where \(\mathfrak{h}(x) = \inf_{y\in[0,x]} h(y)\).

\bigskip

Take \(x = y_*^\varepsilon = 1 - \varepsilon^\beta\).   Since \(h\) is continuously differentiable, \(h(1)=0\) and $h'(1)<0$, there exists \(c>0\) such that for sufficiently small \(\varepsilon\),
\[
c \varepsilon^\beta \le \mathfrak{h}(y_*^\varepsilon) \le c^{-1} \varepsilon^\beta \ .
\]
Hence, there exists a constant $C>0$ such that
\[
|R_{V^\varepsilon}(y_*^\varepsilon)| \leq C\bigl[\varepsilon^{1/2-\alpha - \beta} + \varepsilon^{2\alpha+\beta}\bigr] \ .
\]
Choose \(\alpha = \beta/2\) (note that \(\beta<1/2\) implies \(\alpha\in(0,1/4)\)). Then
\[
\varepsilon^{1/2-\alpha-\beta} = \varepsilon^{1/2 - 3\beta/2}, \qquad \varepsilon^{2\alpha+\beta} = \varepsilon^{2\beta} \ .
\]
For \(\beta < 1/3\) (which is compatible with \(\beta\in(0,1/2)\)), both exponents are positive, so that 

\[
\lim_{\varepsilon \to 0} R_{V^\varepsilon}(y_*^\varepsilon) =0 \ .
\]

\bigskip

Now,

\[
V^\varepsilon(y_*^\varepsilon) = -\log \varepsilon + \log\left(\frac{h(0)}{f(0)}\right) + \log(1-\varepsilon^\beta) + o(1).
\]

Since \(\log(1-\varepsilon^\beta) = O(1)\), we obtain

\[
V^\varepsilon(y_*^\varepsilon) = -\log \varepsilon + O(1) \xrightarrow[\varepsilon\to0]{} +\infty.
\]

\bigskip

Consequently,

\[
\exp\bigl(-V^\varepsilon(y_*^\varepsilon)\bigr) = \exp\bigl(\log\varepsilon + O(1)\bigr) = \varepsilon \cdot e^{O(1)} \xrightarrow[\varepsilon\to0]{} 0.
\]

\bigskip

Thus, \(\exp(-V^\varepsilon(y_*^\varepsilon)) \to 0\) as required.

\end{proof}


\section{Uniform convergence from Laplace transform convergence}

\begin{lemma}
\label{lem:Tauber}
Suppose $(h^\varepsilon)_{\varepsilon>0}$ and $h$ are non-negative continuous non-increasing functions from $[0,\infty)$ into $\mathbb R$ uniformly bounded by a constant $M>0$. If
\[
\forall \sigma>0, \quad \lim_{\varepsilon \to 0} \int_0^\infty e^{-\sigma s} h^\varepsilon (s) \, ds = \int_0^\infty e^{-\sigma s} h (s) \, ds \ ,
\]
then $(h^\varepsilon)_{\varepsilon> 0}$ converges to $h$ uniformly on compact subsets of $[0,\infty)$.
\end{lemma}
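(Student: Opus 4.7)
The plan is to split the proof into two steps: first I would establish pointwise convergence $h^\varepsilon(t) \to h(t)$ for each $t > 0$ by combining Helly's selection principle with the uniqueness of Laplace transforms, and then upgrade pointwise to uniform convergence on any compact interval $[0, T]$ via a Dini-type argument that exploits the monotonicity of the $h^\varepsilon$ and the uniform continuity of $h$.

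For pointwise convergence I would argue by contradiction. Suppose there exist $t_0 > 0$, $\eta > 0$, and a subsequence $\varepsilon_k \to 0$ with $|h^{\varepsilon_k}(t_0) - h(t_0)| \ge \eta$. Since each $h^{\varepsilon_k}$ is non-increasing and uniformly bounded by $M$, Helly's selection theorem yields a further subsequence (not relabeled) converging pointwise on $[0, \infty)$ to some non-increasing function $g : [0, \infty) \to [0, M]$. The dominated convergence theorem, with $M e^{-\sigma s}$ as an integrable dominator, then implies that the Laplace transform of $g$ agrees with that of $h$ for every $\sigma > 0$. Injectivity of the Laplace transform on bounded measurable functions gives $g = h$ almost everywhere; since $g$ is monotone (so has at most countably many discontinuities) and $h$ is continuous, approaching any $t \in (0, \infty)$ through one-sided sequences of continuity points of $g$ (on which $g$ coincides with $h$) shows that $g(t-) = g(t+) = h(t)$ at every such $t$, so in fact $g \equiv h$ on $(0, \infty)$, contradicting $|g(t_0) - h(t_0)| \ge \eta$.

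For uniform convergence on $[0, T]$, fix $\eta > 0$. By uniform continuity of $h$ on $[0, T]$, choose a partition $0 = t_0 < t_1 < \cdots < t_N = T$ with $h(t_i) - h(t_{i+1}) < \eta/2$ for every $i$. For any $t \in [t_i, t_{i+1}]$, the non-increasing nature of both $h^\varepsilon$ and $h$ gives
\begin{equation*}
h^\varepsilon(t_{i+1}) - h(t_i) \le h^\varepsilon(t) - h(t) \le h^\varepsilon(t_i) - h(t_{i+1}),
\end{equation*}
which immediately yields
\begin{equation*}
|h^\varepsilon(t) - h(t)| \le \max_{0 \le j \le N} |h^\varepsilon(t_j) - h(t_j)| + \eta/2,
\end{equation*}
so that the pointwise convergence along the finite grid $\{t_0, \ldots, t_N\}$ forces the right-hand side below $\eta$ for $\varepsilon$ small enough, uniformly in $t \in [0, T]$.

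The main delicate point is the boundary at $t = 0$: the Helly/Laplace machinery yields pointwise convergence only on $(0, \infty)$, so the Dini step at the grid point $t_0 = 0$ requires a separate check. In the setting of Corollary~\ref{cor:Poisson0}, where the lemma is invoked, this is automatic because the relevant functions $\sum_{k=0}^n \Phi_k^\varepsilon$ satisfy $h^\varepsilon(0) = 1 = h(0)$ identically; in general one either restricts to $[\delta, T]$ with $\delta > 0$, or verifies $h^\varepsilon(0) \to h(0)$ by another route (e.g.\ using right-continuity of $h$ at $0$ together with pointwise convergence at a sequence $s_n \downarrow 0$ and the monotonicity squeeze $h^\varepsilon(s_n) \le h^\varepsilon(0)$). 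Beyond that, the only nonroutine ingredient is the injectivity of the Laplace transform on bounded measurable functions, which is classical.
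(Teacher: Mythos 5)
Your proof follows essentially the same route as the paper's: Helly selection, dominated convergence and injectivity of the Laplace transform to get pointwise convergence, then a Dini-type upgrade using monotonicity. You are, however, more careful at two points where the paper is loose. In passing from $\tilde h = h$ a.e.\ to everywhere, the paper asserts the Helly limit is ``continuous by assumption,'' which is unjustified (a pointwise limit of continuous monotone functions need not be continuous); your argument via one-sided limits through continuity points of the monotone limit is the correct repair on $(0,\infty)$. You also write out the Dini step rather than citing it; that part is fine.

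The delicate point you flag at $t=0$ is in fact a genuine defect of the lemma as stated, not just of your proof: the Laplace transform cannot see the value at $0$, and one can take $h\equiv 1/2$ and $h^\varepsilon$ continuous, non-increasing, equal to $2$ at $0$ and to $1/2$ on $[\varepsilon,\infty)$ (linear in between); all hypotheses hold, the Laplace transforms converge, yet $h^\varepsilon(0)\not\to h(0)$, so uniform convergence on $[0,T]$ fails. The lemma therefore needs either the extra hypothesis $h^\varepsilon(0)\to h(0)$ or a restriction to compact subsets of $(0,\infty)$. Be aware that your parenthetical repair via the squeeze $h^\varepsilon(s_n)\le h^\varepsilon(0)$ only yields $\liminf_{\varepsilon\to 0} h^\varepsilon(0)\ge h(0)$ and not the needed upper bound, so it does not close the gap in general; but your observation that in the intended application (Corollary \ref{cor:Poisson0}) one has $h^\varepsilon(0)=h(0)=1$ identically does make the conclusion valid where the lemma is actually used. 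Apart from this caveat, which applies equally to the paper's own proof, your argument is correct.
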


\begin{proof}
 
We proceed in several steps.

\bigskip

Since the family $(h^\varepsilon)_{\varepsilon>0}$ is uniformly bounded and consists of monotone (decreasing) functions, by Helly selection theorem \cite[Theorem 25.9]{B95}, every sequence $(\varepsilon_n)_{n \ge 0}$ with $\varepsilon_n \to 0$ has a subsequence $(\varepsilon_{n_k})_{k \ge 0}$ such that $(h^{\varepsilon_{n_k}})_{k\ge 0}$ converges pointwise to some function $\tilde{h}$ on $[0,\infty)$. Moreover, $\tilde{h}$ is non-increasing (as a pointwise limit of non-increasing functions), and $0 \le \tilde{h} \le M$.

\bigskip

Fix $\sigma > 0$. Since $\lim_{k \to \infty} h^{\varepsilon_{n_k}} = \tilde{h}$ pointwise and all functions are bounded by $M$, the dominated convergence theorem implies
\[
\lim_{k \to \infty} \int_0^\infty e^{-\sigma s} h^{\varepsilon_{n_k}}(s)  ds = \int_0^\infty e^{-\sigma s} \tilde{h}(s)  ds \ .
\]
By the hypothesis, the same integrals converge to $\int_0^\infty e^{-\sigma s} h(s)  ds$. Hence,
\[
\int_0^\infty e^{-\sigma s} \tilde{h}(s)  ds = \int_0^\infty e^{-\sigma s} h(s)  ds \quad \text{for all } \sigma > 0 \ .
\]
The Laplace transform is injective on bounded measurable functions, so $\tilde{h}(s) = h(s)$ for almost every $s$. Since $h$ and $\tilde{h}$ are both non-increasing and right-continuous (in fact, continuous by assumption), they are equal at all points. Thus $\tilde{h} = h$ everywhere.

\bigskip

We have shown: every subsequence of $(h^\varepsilon)_{\varepsilon >0}$ has a further subsequence converging pointwise to $h$. This implies the full family $(h^\varepsilon)_{\varepsilon>0}$ converges pointwise to $h$ as $\varepsilon$ vanishes.

\bigskip

Now, $h^\varepsilon$ and $h$ are continuous and non-increasing. On any compact interval $[0,T]$, pointwise convergence of monotone functions to a continuous limit implies uniform convergence by (second) Dini  theorem{\footnote{It is not clear that this theorem is really due to Dini  but it appears in \cite{PS98}.}}. Thus $(h^\varepsilon)_{\varepsilon>0}$ converges to $h$ uniformly on $[0,T]$. Since $T>0$ was arbitrary, the convergence is uniform on all compact subsets of $[0,\infty)$.

\end{proof}

\section*{Acknowledgements}
The article was prepared within the framework of the Basic Research Program at HSE University.

\bibliographystyle{alpha}  
\bibliography{references}

\end{document}